\newtheorem{theorem}{Theorem}[section]
\newtheorem{lemma}[theorem]{Lemma}
\newtheorem{corollary}[theorem]{Corollary}
\newtheorem{proposition}[theorem]{Proposition}
\theoremstyle{definition}
\newtheorem{definition}[theorem]{Definition}
\newtheorem{example}[theorem]{Example}
\theoremstyle{remark}
\newtheorem{remark}[theorem]{Remark}
\newtheorem{question}[theorem]{Question}
\numberwithin{equation}{section}
\newcommand{\Grs}{\Gr}
\newcommand{\GL}{\operatorname{GL}}
\newcommand{\Gr}{\operatorname{Gr}}
\newcommand{\Rep}{\operatorname{Rep}}
\newcommand{\Hom}{\operatorname{Hom}}
\newcommand{\PHom}{\operatorname{PHom}}
\newcommand{\Aut}{\operatorname{Aut}}
\newcommand{\End}{\operatorname{End}}
\newcommand{\Der}{\operatorname{Der}}
\newcommand{\Ext}{\operatorname{Ext}}
\newcommand{\ext}{\operatorname{ext}}
\newcommand{\E}{\operatorname{E}}
\newcommand{\e}{\operatorname{e}}
\newcommand{\Ho}{\operatorname{H}}
\newcommand{\Ker}{\operatorname{Ker}}
\newcommand{\Coker}{\operatorname{Coker}}
\newcommand{\Img}{\operatorname{Im}}
\newcommand{\T}{\operatorname{T}}
\newcommand{\N}{\operatorname{N}}
\newcommand{\Id}{\operatorname{Id}}
\newcommand{\rank}{\operatorname{rank}}
\newcommand{\Ind}{\operatorname{Ind}}
\newcommand{\Com}{\operatorname{Com}}
\newcommand{\proj}{\operatorname{proj-}\!}
\newcommand{\prj}{\operatorname{proj}}
\newcommand{\inj}{\operatorname{inj}}
\newcommand{\op}{\operatorname{op}}
\newcommand{\minus}{-}
\newcommand{\norm}[1]{\lVert#1\rVert}
\newcommand{\br}[1]{\overline{#1}}
\newcommand{\mc}[1]{\mathcal{#1}}
\newcommand{\Spec}{\operatorname{Spec}}
\renewcommand{\binom}[2]{{\textstyle {#1\choose #2}}}
\newcommand{\R}{{\mathbb R}}
\newcommand{\dashuparrow}{\rotatebox[origin=c]{90}{$\dashrightarrow$}}
\newcommand{\twoheaduparrow}{\rotatebox[origin=c]{90}{$\twoheadrightarrow$}}
\newcommand{\dv}{\underline{\dim}}
\begin{document}

\title{General Presentations of Algebras}

\author{Harm Derksen}
\address{Department of Mathematics, University of Michigan, Ann Arbor, MI 48109, USA}
\email{hderksen@umich.edu}
\thanks{The authors are partially supported by NSF grant DMS 901298.}

\author{Jiarui Fei}
\address{Department of Mathematics, University of Michigan, Ann Arbor, MI 48109, USA}
\email{jiarui@umich.edu}
\thanks{}

\subjclass[2000]{Primary 16G10; Secondary 16E05, 14E15, 55U10}

\date{}
\dedicatory{}
\keywords{General Presentation, Representation of Algebra, Canonical Decomposition, Quiver with Potential, Cluster Complex}

\begin{abstract}
For any finite dimensional basic associative algebra, we study the presentation spaces and their relation with the representation spaces. We prove two theorems about a general presentation, one on its subrepresentations and the other on its canonical decomposition.
As a special case, we consider rigid presentations. We show how to complete a rigid presentation and study the number of nonisomorphic direct summands and different complements. Based on that, we construct a simplicial complex governing the canonical decompositions of rigid presentations and provide some examples.
\end{abstract}

\maketitle

\section{Introduction}
Kac studied in \cite{K} properties of general representations of path algebras. We say that a {\em general representation} with dimension vector $\alpha$ has a certain property, if all representations in some Zariski open (and dense) subset of the space of $\alpha$-dimensional representations have that property.
We say that $\alpha=\alpha_1\oplus \alpha_2\oplus \cdots \oplus \alpha_r$ is the {\em canonical decomposition} of a dimension vector $\alpha$ if a general representation $V$ of dimension $\alpha$ has a decomposition $V=V_1\oplus V_2\oplus \cdots \oplus V_r$, where each $V_i$ is indecomposable of dimension $\alpha_i$. For such a decomposition, each $\alpha_i$ is a Schur root, which implies a general representation of dimension $\alpha_i$ is indecomposable.
Conversely, Kac showed that if $\alpha_1,\dots,\alpha_r$ are Schur roots, and $\ext(\alpha_i,\alpha_j)=0$ for all $i\neq j$, then $\alpha=\alpha_1\oplus\cdots\oplus \alpha_r$ is the canonical decomposition. Here, $\ext(\alpha,\beta)$ denotes $\dim \Ext^1(V,W)$ where $V$ and $W$ are general representations of dimension $\alpha$ and $\beta$ respectively.
In the same spirit, Schofield showed in \cite{S} that $\ext(\alpha,\beta)=0$ if and only if a general representation of dimension $\alpha+\beta$, has an $\alpha$ dimensional subrepresentation. Schofield also gave recursive formulas for $\ext(\alpha,\beta)$. An efficient algorithm for finding the canonical decomposition of a dimension vector was given in \cite{DW}. Moreover, in the same paper a simplicial complex governing the canonical decomposition is described.

A natural question is whether the results about general representations of a path algebra can be generalized to path algebras {\em with relations}. Unfortunately, the representation spaces of path algebras with relations may not be irreducible (or even reduced), so one has to work with general representations in a given irreducible component as in \cite{CBS}.
As another approach, we can study general {\em presentations}. For fixed projective representations $P_0,P_1$ of a given path algebra with relations, a {\em general presentation} $f:P_1\to P_0$ is a general element in the vector space $\Hom(P_1,P_0)$. This approach is justified by our result that general presentations present general representations (Theorem \ref{T:PP2Rep}). We restrict ourselves to the case when the algebra with relations is finite-dimensional. In that case the category of representations is Hom-finite and has ``enough projectives''. For two presentations $f,g$, we will define a finite-dimensional space $\E(f,g)$ which plays the role of $\Ext^1$ for path algebras (without relations).
We will prove in Theorems \ref{CDT} and \ref{SPT} analogs of the canonical decomposition and subpresentations of general presentations of path algebras {\em with} relations in this context. In the special case of path algebras (without relations), we recover the results of Kac and Schofield. Similar questions for general presentations of path algebras {\em without} relations were studied in \cite{IOTW} through a different approach.
Using presentation to study representations has a long history. The method was introduced by the Kiev school and formalized by Roiter to reduced the problem of studying representations of an algebra to that of a {\em bocs}. The technique was further developed by Crawley-Boevey in \cite{CB}.

A representation $M$ of a path algebra with relations is called {\em rigid} if $\Ext^1(M,M)=0$. A rigid representation $M$ is {\em partial tilting} if in addition it has projective dimension $\leq 1$.
Brenner-Butler's classical tilting theory \cite[VI]{ASS} says that any partial tilting representation $M$ can be completed to a maximal $\br{M}$ in the sense that any indecomposable representation $N$ for which $\br{M}\oplus N$ is partial tilting is isomorphic to a direct summand of $\br{M}$. Moreover, the number of pairwise nonisomorphic indecomposable summands of the completion $\br{M}$ is equal to $|Q_0|$, the number of vertices of the quiver of the path algebra. We will prove a similar statement in Theorem \ref{MRT} for {\em rigid presentations}, i.e., those presentation $f$ satisfying $\E(f,f)=0$. For a finite-dimensional path algebra, there are exactly two ways \cite[Proposition 3.6]{HU} to complete a {\em sincere almost tilting} representation $M$, that is, $M$ has $|Q_0|-1$ nonisomorphic indecomposable summands and each component of the dimension vector of $M$ is non-zero. We show in Proposition \ref{+-} that this is true for presentations of path algebras with relations as well. Moreover, we define a simplicial complex which governs the decomposition of rigid presentations. In the case of path algebra (without relations), this simplicial complex is the well-known {\em cluster complex} \cite{IOTW} associated to the cluster algebra of an acyclic quiver. {\em Cluster algebras} were introduced by Fomin and Zelevinsky in 2000 \cite{FZ}. Representation theory of path algebras with relations has been used to study the combinatorics of cluster algebras. In \cite{DWZ1,DWZ2}, Weyman, Zelevinsky and the first author use {\em quivers with potentials} to prove results about cluster algebras {\em with coefficients}. From the {\em potential}, one can derive a set of relations. In this context, the space $\E(f,g)$ is the same as the one defined in \cite{DWZ2}. If $V$ is a representation corresponding to a {\em cluster variable}, then $\E(f,f)=0$ where $f$ is the {\em minimal presentation} of $V$. It is unknown whether the converse is true.

This paper is organized as follows. After introducing the basic notation and background, we collect some interesting results concerning the projective presentations in Section \ref{S:PP}. Theorem \ref{T:PP2Rep} and its corollary are our first main results, which explain the relationship between space of presentations and space of representations. In Section \ref{S:SP} we introduce the space $\E$ and prove our second main result, Theorem \ref{SPT}, which is an analogue of Schofield's result on general subrepresentations. In Section  \ref{S:CD} we prove our third main result Theorem \ref{CDT}, which is an analogue of Kac's canonical decomposition. In Section \ref{S:RP}, we show how to complete a rigid presentation and prove another main result Theorem \ref{MRT} as an analogue of the classical tilting theory. Later we study the different complements to an almost maximal rigid presentation. In Section \ref{S:SC}, we introduce the simplicial complex $\mc{S}(A)$ and geometrically realize it on a sphere. Finally in Section \ref{sec:QP}, we briefly mention an application to quivers with potentials.

\section*{Notation and Background}
Throughout this paper, the base field $k$ is algebraically closed of characteristic 0.
Unless otherwise stated, unadorned $\Hom$ and $\otimes$ are all over $k$.
We use the shorthand $\hom_*(-,-)$ for the $k$-dimension for $\Hom_*(-,-)$, where $*$ is a given category.
Similar shorthands may be applied to other functors to $k$-vector spaces.

A {\em quiver} $Q$ is a pair $Q=(Q_0,Q_1)$ consisting of the set of vertices $Q_0$ and the set of arrows $Q_1$. If $a\in Q_1$ is an arrow, then $ta$ and $ha$ denote its tail and its head respectively. A path is a sequence of arrows $p = a_1a_2\cdots a_s$, with $ta_i=ha_{i+1}$ for all $i$. We define $tp=ta_s$ and $hp=ha_1$. For each vertex $v\in Q_0$ we also define the trivial path $e_v$ of length 0, satisfying $te_v=he_v=v$. An oriented cycle is a nontrivial path satisfying $hp=tp$. In this paper, we assume that $Q_0$ and $Q_1$ are finite but allow quivers to have oriented cycles. The path algebra $kQ$ is the $k$-vector space spanned by all paths. The multiplication in the algebra $kQ$ is defined as follows. If $p$ and $q$ are paths, then their product $p\cdot q$ is the concatenation of the paths if $tp=hq$, and is defined $0$ otherwise. The path algebra is bigraded: $kQ=\bigoplus_{v,w\in Q_0}e_vkQe_w$. A relation $r=\sum_{i=1}^s c_ip_i$ with $c_i\in k$ and $p_i$ a path, is called {\em admissible} if $r$ is homogeneous with respect to the grading, i.e., there exist $tr,hr\in Q_0$ such that $tp_i=tr$ and $hp_i=hr$ for all $i$. We assume that $I$ is an {\em admissible ideal}, i.e., a two sided ideal generated by admissible relations of length $\geqslant 2$. The path algebra {\em with relations} is the quotient algebra $kQ/I$. It contains a maximal semisimple subalgebra $R$ spanned by $\{e_v\}_{v\in Q_0}$. We assume that $A=kQ/I$ is finite dimensional henceforth.

A {\em dimension vector} $\alpha$ is a non-negative integer-valued function on $Q_0$. Given a dimension vector $\alpha$, let $M$ be the $R$-module $k^\alpha$, which is a family of finite-dimensional $k$-vector spaces $\{M(v)\}_{v\in Q_0}$ with $\dim M(v)=\alpha(v)$. A {\em representation} $M$ of $kQ$ is an $R$-module $M$ together with a family of linear maps $\{M(a):M(ta)\to M(ha)\}_{a\in Q_1}$. Fixed a dimension vector $\alpha$, the representation space $\Rep_\alpha(kQ)$ of all $\alpha$-dimensional representations is the vector space $\bigoplus_{a\in Q_1}\Hom(k^{\alpha(ta)},k^{\alpha(ha)}).$
For any path $p = a_1a_2\cdots a_s$, we define $M(p): M(tp)\to M(hp)$ to be the composition $M(a_1)M(a_2)\cdots M(a_s)$. So the assignment $M\mapsto M(p)$ defines a polynomial map
$F_p:\Rep_{\alpha}(kQ)\to \Hom(k^{\alpha(tp)},k^{\alpha(hp)}).$
A representation $M$ of $A=kQ/I$ is a representation of $kQ$ satisfying all the relations in $I$, i.e., $M(r)=0$ for all $r\in I$.
Let $k[\Rep_{\alpha}(kQ)]$ denote the ring of polynomial functions on $\Rep_{\alpha}(kQ)$. Then $F_p$ is represented by an $\alpha(hp)\times\alpha(tp)$ matrix with entries in $k[\Rep_{\alpha}(kQ)]$. Let $\tilde{I}\subseteq k[\Rep_{\alpha}(kQ)]$ be the ideal generated by the entries of all $F_r$ for which  $r\in I$. The representation space $\Rep_{\alpha}(A)$ is the scheme $\Spec(k[\Rep_{\alpha}(kQ)]/\tilde{I})$. The coordinate ring $k[\Rep_{\alpha}(kQ)]/\tilde{I}$ represents the following functor: $B\mapsto\Hom_{R\text{-alg}}(A,\End_B(B\otimes M))$, from the category of finitely generated commutative $k$-algebra to the Sets. As a set, $\Rep_{\alpha}(A)$ consists of all $\alpha$-dimensional representation of $A$.

The group $\GL_\alpha:=\prod_{v\in Q_0}\GL_{\alpha(v)}$ acts on $\Rep_\alpha(A)$ by the natural base change. Two representations $M,N\in\Rep_\alpha(A)$ are isomorphic if they lie in the same $\GL_\alpha$-orbit.
A morphism $f:M\to N$ between two representations is a collection of linear maps $\{f(v):M(v)\to N(v)\}_{v\in Q_0}$ such that for each $a\in Q_1$ we have $N(a)f(ta)=f(ha)M(a)$. The category $\Rep(A)$ of representations of $A$ is equivalent to the category of all finite dimensional left $A$-modules. For each vertex $v\in Q_0$, let $S_v$ be the 1-dimensional simple representation with $S_v(v)=k$. The Grothendieck group $K_0(\Rep(A))$ of this abelian category is a free abelian group generated by all $S_v$ \cite[Theorem III.3.5]{ASS}. So the dimension vector of a representation $M$ is its class in $K_0(\Rep(A))$. A representation $M\in\Rep(A)$ is called {\em indecomposable} if $M=L\oplus N$ for $L,N\in\Rep(A)$, then one of $L$ and $N$ must be the zero objects. The category $\Rep(A)$ has the {\em Krull-Schmidt property}, meaning that each representation has a unique decomposition into indecomposable summands.

The category $\Rep(A)$ has enough projective objects.
All indecomposable projective representations are of form $P_v:=Ae_v$ for some vertex $v\in Q_0$.
They are characterized by the property that $\Hom_A(P_v,N)=N(v)$ for any $N\in\Rep(A)$.
Unfortunately when $v=0,1$ this standard notation may be confused with $P_0,P_1$ in some presentation $P_0\xrightarrow{f} P_1$,
but we feel that this should not be an issue in this paper.
For any $\beta\in\mathbb{N}_0^{Q_0}$, we denote $P(\beta):=\bigoplus_{v\in Q_0} P_v^{\beta(v)}$ and define the space of presentation  $\PHom_A(\beta_1,\beta_0):=\Hom_A(P(\beta_1),P(\beta_0)).$ The automorphism group $\Aut_A(M)$ of $M\in\Rep(A)$ consists of invertible elements in $\Hom_A(M,M)$. The group $\Aut_A(P(\beta_1))\times\Aut_A(P(\beta_0))$ acts on $\PHom_A(\beta_1,\beta_0)$ by $(g_1,g_0)f=g_0fg_1^{\minus 1}$.
\begin{definition} \label{D:delta}
We define the $\delta$-$vector$ of a presentation $f\in\PHom_A(\beta_1,\beta_0)$ by $\delta(f)=\beta_0-\beta_1\in\mathbb{Z}^{Q_0}$.
Conversely, to any $\delta\in\mathbb{Z}^{Q_0}$, we associate the {\em reduced} presentation space $\PHom_A(\delta)=\PHom_A([-\delta]_+,[\delta]_+)$,
where $[\delta]_+(v)=\max\{\delta(v),0\}$.
\end{definition}

The homotopy category $K^b(\proj A)$ of bounded complexes of projective representations of $A$ is a triangulated category with the Grothendieck group $K_0(K^b(\proj A))$ isomorphic to $K_0(\Rep(A))$. In fact, if we embed $K^b(\proj A)$ and $\Rep(A)$ canonically in the bounded derived category $D^b(\Rep(A))$, then the Euler form
$\langle P,M\rangle=\chi(\text{RHom}_{D^b(\Rep(A))}(P,M))\text{ on }K^b(\proj A)\times\Rep(A)$
gives us a dual pairing with the class of $P_v$ dual to the class of $S_v$. The $\delta$-vector of a presentation is nothing but its class in $K_0(K^b(\proj A))$. Let $f=P_l\to\cdots\to P_1\to P_0$ be a length $l+1$ complex in $K^b(\proj A)$. The $i$-{\em th Betti vector} $\beta_i(f)$ is the minimal vector such that $f$ is equivalent to some complex $P(\beta_l(f))\to\cdots\to P(\beta_1(f))\to P(\beta_0(f))$ in $K^b(\proj A)$.

\section{Presentations vs. Representations}  \label{S:PP}
To begin with, we need a quick review on derivations. Let $A=kQ/I$ be a finite dimensional algebra and $R$ be the maximal semisimple subalgebra spanned by all $e_v\in kQ$. Recall that if $B$ is an $A$-$A$ bimodule, an $R$-derivation $d:A\to B$ is a linear map such that $d(aa')=ad(a')+d(a)a'$ for all $a,a'\in A$ and $d(R)=0$. A derivation $d$ is called {\em inner} if there is some $b\in B$ such that $d(a)=ab-ba$ for all $a\in A$. We denote by $\Der_R(A,B)$ the space of all $R$-derivations $A\to B$ and $\Der_R^0(A, B)$ the subspace of inner $R$-derivations. For any (left) $A$-modules $L$ and $N$, $\Hom(N,L)$ has a natural $A$-$A$ bimodule structure. If $d\in\Der_R(A,\Hom(N,L))$, then let $a'=e_{ta}$, we get $d(a)=d(ae_{ta})=d(a)e_{ta}$ and similarly $d(a)=d(e_{ha}a)=e_{ha}d(a)$. So each $d(a)$ can be identified with an element in $\Hom(N(ta),L(ha))$ and if $d$ is inner, then $d(a)=F(ha)N(a)-L(a)F(ta)$ for some $F\in\Hom_R(N,L)$.
The following fact is proved in \cite[Section 3.7]{LB} for the absolute case, i.e., over $k$. One can slightly modify that proof to obtain

\begin{lemma}\label{HF} The tangent space $\T_M\Rep_\alpha(A)$ of the scheme $\Rep_{\alpha}(A)$ at $M$ is isomorphic to $\Der_R(A,\End(M))$, and the tangent space $\T_M(\pmb{O}_M)$ of the orbit $\pmb{O}_M=\GL_\alpha\cdot M$ is isomorphic to $\Der_R^0(A,\End(M))$. Moreover,
$$\Ext_A^1(N,L)=\Der_R(A,\Hom(N,L))/\Der_R^0(A,\Hom(N,L)).$$
In particular, the normal space $\N_M$ to $\pmb{O}_M$ inside $\Rep_\alpha(A)$ is isomorphic to
$\Ext_A^1(M,M)$.
\end{lemma}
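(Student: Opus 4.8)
The plan is to identify the three claimed isomorphisms one at a time, building everything on the standard description of representations of $A=kQ/I$ as tuples of linear maps subject to the relations, and on the basic yoga of $R$-derivations recalled just before the statement. The unifying principle is that a derivation $d:A\to B$ (with $B$ an $A$-$A$-bimodule) is determined by its values on the arrows $a\in Q_1$, since $d(R)=0$ and $d$ is a derivation; and conversely a choice of $d(a)\in e_{ha}Be_{ta}$ for each arrow extends to a well-defined $R$-derivation precisely when the induced assignment kills every relation $r\in I$ (this is exactly the linearization of the condition $M(r)=0$). That observation makes $\Der_R(A,\End(M))$ the kernel of the differential of the map cutting out $\Rep_\alpha(A)$ inside $\Rep_\alpha(kQ)$, which is the scheme-theoretic tangent space.

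First I would treat the tangent space to the representation scheme. By definition $\T_M\Rep_\alpha(A)$ consists of $k[\epsilon]/(\epsilon^2)$-points of $\Rep_\alpha(A)$ reducing to $M$; using the functor-of-points description given in the excerpt ($B\mapsto \Hom_{R\text{-alg}}(A,\End_B(B\otimes M))$), such a point is an $R$-algebra homomorphism $A\to\End_{k[\epsilon]}(k[\epsilon]\otimes M)=\End(M)[\epsilon]$ of the form $a\mapsto \rho(a)+\epsilon\, d(a)$, and the multiplicativity condition modulo $\epsilon^2$ says exactly that $d$ is an $R$-derivation $A\to\End(M)$ (with the bimodule structure pulled back along $\rho$). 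This gives the first isomorphism. For the orbit, the tangent space $\T_M(\pmb O_M)$ is the image of the differential at the identity of the orbit map $\GL_\alpha\to\Rep_\alpha(A)$, $g\mapsto g\cdot M$; writing $g=\Id+\epsilon F$ with $F\in\End_R(M)=\prod_v\End(M(v))$ and expanding, one finds the tangent vector sends the arrow $a$ to $F(ha)M(a)-M(a)F(ta)$, which by the computation recalled before the lemma is precisely the inner derivation associated to $F$; hence $\T_M(\pmb O_M)\cong\Der^0_R(A,\End(M))$.

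Next I would establish the $\Ext^1$ formula. The cleanest route is to invoke the standard identification of $\Ext^1_A(N,L)$ with Hochschild-type/Yoneda extension classes via derivations: an extension $0\to L\to E\to N\to 0$ of $A$-modules, once split as $R$-modules so that $E=L\oplus N$ with the $A$-action on $N$ twisted by some $d:A\to\Hom(N,L)$, is an $A$-module exactly when $d$ is an $R$-derivation, two such $d$ give isomorphic extensions iff they differ by an inner derivation, and the Baer sum matches addition of derivations. This is classical (it is the relative bar-resolution computation of $\Ext$ over $R$, valid because $A$ is projective as an $R$-module), so I would cite \cite{LB} as the excerpt does and only indicate the bijection. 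Finally, the ``in particular'' is the specialization $N=L=M$ together with the first two parts: the normal space is $\N_M=\T_M\Rep_\alpha(A)/\T_M(\pmb O_M)\cong \Der_R(A,\End(M))/\Der^0_R(A,\End(M))=\Ext^1_A(M,M)$.

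The main obstacle, and the only point needing genuine care, is the passage from $kQ$-level derivations to $A=kQ/I$-level derivations: one must check that ``a tuple $(d(a))_{a\in Q_1}$ with $d(a)\in\Hom(M(ta),M(ha))$ extends to a derivation on $A$ iff it annihilates $I$'' and that this annihilation condition is exactly the linearization at $M$ of the equations $\{F_r=0: r\in I\}$ defining $\Rep_\alpha(A)\subseteq\Rep_\alpha(kQ)$. Granting that the relations generating $I$ have length $\geq 2$ (as assumed for admissible ideals) and that $d$ is $R$-linear, the Leibniz rule gives $d(p)=\sum_i M(a_1)\cdots d(a_i)\cdots M(a_s)$ for a path $p=a_1\cdots a_s$, which is manifestly the derivative of $F_p$ in the direction $(d(a))_a$; so the scheme-theoretic tangent space (kernel of the Jacobian of the $F_r$) coincides with the space of derivations killing $I$. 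This is precisely where the hypothesis that $\Rep_\alpha(A)$ is taken as a \emph{scheme} (possibly non-reduced) is used, and it is the one place I would write the argument out rather than cite it.
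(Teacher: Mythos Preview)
Your proposal is correct and, in fact, gives considerably more detail than the paper itself: the paper does not include a proof of this lemma but simply states that it follows by a slight modification of the argument in \cite[Section~3.7]{LB} (which treats the absolute case over $k$ rather than over $R$). Your dual-numbers computation for the tangent space, differentiation of the orbit map for the inner derivations, and the extension-class description of $\Ext^1$ are exactly the standard ingredients one would supply to make that citation explicit, and your emphasis on the passage from $kQ$-derivations to $A$-derivations (via linearizing the relations $F_r=0$) is precisely the point where the relative-over-$R$ setup matters.
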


Any projective presentation $f\in\Hom_A(P_1,P_0)$ induces an exact sequence
\begin{equation}\label{ES2}0\to K\xrightarrow{\iota}P_1\xrightarrow{f}P_0\xrightarrow{\pi}C\to0.\end{equation}
In this situation, we say that $C$ is presented by $f$.
Apply the bifunctor $\Hom_A(-,-)$ to $\eqref{ES2}$ and itself, then we obtain the following double complex:
\begin{displaymath} \begin{matrix}
\Hom_A(C,C)\hookrightarrow & \Hom_A(P_0,C)\xrightarrow{f^C} & \Hom_A(P_1,C)\xrightarrow{\iota^C} & \Hom_A(K,C)\\
\uparrow & \twoheaduparrow_{} & \quad\twoheaduparrow_{\pi_*} & \uparrow\\
\Hom_A(C,P_0)\hookrightarrow & \Hom_A(P_0,P_0)\xrightarrow{f^*} & \Hom_A(P_1,P_0)\xrightarrow{\quad} & \Hom_A(K,P_0)\\
\uparrow & \uparrow & \quad\uparrow_{f_*} & \uparrow_{}\\
\Hom_A(C,P_1)\hookrightarrow & \Hom_A(P_0,P_1)\xrightarrow{\quad} & \Hom_A(P_1,P_1)\xrightarrow{\quad} & \Hom_A(K,P_1)\\
\end{matrix} \end{displaymath}

Let $\Hom_A(P_0,P_1)\xrightarrow{d_0}\Hom_A(P_1,P_1)\oplus\Hom_A(P_0,P_0)\xrightarrow{d_1}\Hom_A(P_1,P_0)$ be the induced complex.
Recall that the group $\Aut_A(P_1)\times\Aut_A(P_0)$ acts on $\PHom_A(P_1,P_0)$ by $(g_1,g_0)f=g_0fg_1^{\minus 1}$.
If we identify the Lie algebra of $\Aut_A(P_1)\times\Aut_A(P_0)$ with $\End_A(P_1)\oplus\End_A(P_0)$, then it is easy to see that $d_1=(-f_*,f^*)$ is the differential of the orbit map at $f$. So the image of $d_1$ gives the tangent space of the $\Aut_A(P_1)\times\Aut_A(P_0)$-orbit $\mathcal{O}_f$ of $f$, and thus the normal space $\N_f$ of $\mathcal{O}_f$ in $\Hom_A(P_1,P_0)$ is the quotient space $\Hom_A(P_1,P_0)/\Img d_1$. An easy diagram chasing can show that it can be identified with $\Hom_A(P_1,C)/\Img f^C$ under $\pi_*$. In the meanwhile, the normal space $\N_C$ of $\GL_\alpha\cdot C$ in $\Rep_\alpha(A)$ is $\Ext_A^1(C,C)\cong\Ker\iota^C/\Img f^C$.
So the normal space $\N_C$ can be identified as a subspace of the normal space $\N_f$.

For any $\Aut_A(P_1)\times\Aut_A(P_0)$-stable subvariety $Y$ of $\Hom_A(P_1,P_0)$ and $\GL_\alpha$-stable subvariety $X$ of $\Rep_\alpha(A)$, we define
\begin{align*}Z(Y,X)=\{(f,\pi,C)\in Y\times\Hom_R(P_0,k^\alpha)\times X \mid & \pi\in\Hom_A(P_0,C)\\
             \text{ and } & P_1\xrightarrow{f}P_0\xrightarrow{\pi}C\to 0 \text{ is exact }\}.\end{align*}
\hbox{Let $p$ (resp. $q$) be the projection of $Z(Y,X)$ given by $z=(f,\pi,C)\mapsto C$ (resp. $\mapsto f$).}

\begin{lemma} The following sequence is exact.
\begin{equation*}\T_z Z(Y,X)\xrightarrow{dp}\T_C X\to\genfrac{}{}{}{}{\N_C X}{\N_f Y\cap\N_C X}\to 0.\end{equation*}
\end{lemma}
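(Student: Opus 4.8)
The plan is to compute $\T_z Z(Y,X)$ by linearizing the two closed conditions that (together with an open condition) define $Z(Y,X)$ near $z$: that $\pi$ be $A$-linear with respect to $C$, and that $\pi\circ f=0$. Concretely, I would identify $\T_z Z(Y,X)$ with the set of triples $(\dot f,\dot\pi,\dot C)$ with $\dot f\in\T_f Y$, $\dot C\in\T_C X$, $\dot\pi\in\Hom_R(P_0,k^\alpha)$, such that $\pi_\epsilon:=\pi+\epsilon\dot\pi$ is an $A\otimes k[\epsilon]/(\epsilon^2)$-linear map $P_0\otimes k[\epsilon]\to C_\epsilon$ — where $C_\epsilon$ is $C$ deformed along $\dot C$, with its twisted $A\otimes k[\epsilon]$-module structure — and $\pi\circ\dot f+\dot\pi\circ f=0$; under this identification $dp(\dot f,\dot\pi,\dot C)=\dot C$. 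Exactness at the right-hand term of the sequence is then immediate, since $\T_C X\twoheadrightarrow\N_C X\twoheadrightarrow\N_C X/(\N_f Y\cap\N_C X)$.

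The heart of the argument is a cocycle computation that I would carry out first. For a triple $(\dot f,\dot\pi,\dot C)$ as above, $\pi_\epsilon$ is an $A\otimes k[\epsilon]$-linear lift of $\pi$, so the induced map $P_1\to\epsilon C\cong C$ is $\dot\pi\circ f$; one checks directly (using the twisting relation satisfied by $\dot\pi$ together with $\pi\circ f=0$) that $\dot\pi\circ f$ is $A$-linear and lies in $\Ker\iota^C$. By the standard description of $\Ext_A^1(C,C)$ through the projective resolution $P_1\xrightarrow{f}P_0\xrightarrow{\pi}C$, this element represents, modulo $\Img f^C$, the extension class $[0\to\epsilon C\to C_\epsilon\to C\to 0]$, which under $\Der_R(A,\End C)\to\Ext_A^1(C,C)$ of Lemma \ref{HF} is exactly the image of $\dot C$. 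Finally, $\pi\circ\dot f+\dot\pi\circ f=0$ gives $\dot\pi\circ f=-\pi_*(\dot f)$. Conclusion of this step: for every triple in $\T_z Z(Y,X)$, the class of $\dot C$ in $\Ext_A^1(C,C)=\Ker\iota^C/\Img f^C$ is represented by $-\pi_*(\dot f)$ modulo $\Img f^C$.

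With this in hand, exactness at $\T_C X$ is formal. Recall from the discussion before the lemma that $\pi_*$ identifies $\N_f\Hom_A(P_1,P_0)=\Hom_A(P_1,P_0)/\Img d_1$ with $\Hom_A(P_1,C)/\Img f^C$, taking $\N_f Y=\T_f Y/\Img d_1$ to $(\pi_*\T_f Y)/\Img f^C$ and containing $\N_C X$ inside $\Ext_A^1(C,C)=\Ker\iota^C/\Img f^C$; and since $\Img d_1\subseteq\T_f Y$ we get $\Img f^C=\pi_*(\Img d_1)\subseteq\pi_*\T_f Y$. For a triple in $\T_z Z(Y,X)$, the class of $\dot C$ is $[-\pi_*\dot f]\in(\pi_*\T_f Y)/\Img f^C=\N_f Y$ (because $\dot f\in\T_f Y$) and also lies in $\N_C X$ (because $\dot C\in\T_C X$), so $\dot C$ maps to $0$ in $\N_C X/(\N_f Y\cap\N_C X)$; this gives $\Img(dp)\subseteq\Ker$. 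Conversely, given $\dot C\in\T_C X$ whose class lies in $\N_f Y\cap\N_C X$, I would pick any $\dot\pi$ with $\pi_\epsilon$ $A\otimes k[\epsilon]$-linear (possible as $P_0$ is projective and $C_\epsilon\to C$ surjective); then $\dot\pi\circ f$ represents that class, hence $\dot\pi\circ f\in\pi_*\T_f Y+\Img f^C=\pi_*\T_f Y$, say $\dot\pi\circ f=\pi_*(g)$ with $g\in\T_f Y$; setting $\dot f:=-g$ makes $(\dot f,\dot\pi,\dot C)$ a member of $\T_z Z(Y,X)$ with $dp$-image $\dot C$. Thus $\Ker\subseteq\Img(dp)$.

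The routine parts are the identification of $\T_z Z(Y,X)$ with the solution set of the linearized equations and the existence of a lift $\pi_\epsilon$. The one substantive step — and the place I expect to spend the most care — is the cocycle computation of the second paragraph: one must reconcile the twisted $A\otimes k[\epsilon]$-module structure on $C_\epsilon$ with the three incarnations of $\Ext_A^1(C,C)$ used implicitly (as $\Der_R/\Der_R^0$, as extension classes, and as $\Ker\iota^C/\Img f^C$), so that the cocycle $\dot\pi\circ f$ coming from the presentation really is the one coming from the projective resolution. Everything else then follows formally.
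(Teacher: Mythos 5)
Your proposal is correct and follows essentially the same route as the paper: both linearize the defining conditions over $k[\epsilon]$ to obtain the two equations on $(\dot f,\dot\pi,\dot C)$, both hinge on the observation that $\dot\pi\circ f$ (the paper's $Gf$) represents the class of $\dot C$ in $\Ext_A^1(C,C)=\Ker\iota^C/\Img f^C$ — which the paper verifies by the explicit diagram built from the extension $C_\epsilon$ — and both use projectivity of $P_0$ (equivalently $\Ext_A^1(P_0,C)=0$, i.e.\ innerness of the derivation $a\mapsto\dot C(a)\pi(ta)$) to produce the lift $\dot\pi$ in the converse direction. The remaining differences are cosmetic: you cite the connecting-map description of the extension class where the paper draws the diagram, and you fix $\dot\pi$ and adjust $\dot f$ where the paper fixes $F$ and adjusts $G'$ to $G$.
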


\begin{proof}
Using the algebra of dual numbers $k[\epsilon]$, we compute the tangent space $\T_z Z(Y,X)$ as follows.
The condition $\pi\in\Hom_A(P_0,C)$ is defined by $C(a)\pi(ta)=\pi(ha)P_0(a)$ for all $a\in Q_1$.
Since the rank function is upper-semicontinuous, the exactness of the defining sequence of $Z(Y,X)$ is equivalent to $\pi f=0$ plus an open condition.
So a triple $(F,G,\gamma)\in\T_f Y\times\Hom_R(P_0,C)\times\T_C X$ lies in $\T_z Z(Y,X)$
\begin{align} \label{eq1} & \iff \begin{cases}
(\pi+\epsilon G)(f+\epsilon F)=0,\\
(C(a)+\epsilon\gamma(a))(\pi(ta)+\epsilon G(ta))=(\pi(ha)+\epsilon G(ha))P_0(a).\end{cases}\notag \\
& \iff \begin{cases}
Gf+\pi F=0,\\
\gamma(a)\pi(ta)=G(ha)P_0(a)-C(a)G(ta).\end{cases}\end{align}

For any $\gamma\in\T_C X\subseteq\Der_R(A,\End(C))$, let $\xi_\gamma$ be the corresponding element in $\Ext_A^1(C,C)$, then $\xi_\gamma$ has a representative $h\in\Ker\iota_C$. A simple diagram chasing can show that the exactness at $\T_C X$ is equivalent to that $\gamma\in\Img dp$ if and only if $h$ can be chosen in $\pi_*\T_fY$.

Suppose that $\gamma\in \Img dp$, then it satisfies \eqref{eq1}. We have that $Gf=-\pi F\in\pi_*\T_fY\subseteq\Hom_A(P_1,C)$ and it lies in $\Ker\iota_C$. We claim that $h=Gf$ represents $\xi_\gamma$. Consider the following commutative diagram, where $\left(\begin{smallmatrix}C \\ \ \uparrow_\gamma \\ C \end{smallmatrix}\right)$ is the representation obtained by extension using $\gamma$ as in Lemma \ref{HF}.
\begin{displaymath} \begin{matrix}
& & P_1 & \xrightarrow{\ f\ } & P_0 & \xrightarrow{\ \pi\ } &  C & \to & 0\\
& & \quad\Big\downarrow{^{Gf}} & & \quad\Big\downarrow{\binom{G}{\pi}} & & \Big\Arrowvert \\
0 & \xrightarrow{} & C & \xrightarrow{\binom{0}{\Id}} & \quad\left(\begin{smallmatrix}C \\ \ \uparrow_\gamma \\ C \end{smallmatrix}\right)\quad &\xrightarrow{(0,\Id)} & C & \to & 0
\end{matrix} \end{displaymath}
Note that the second equation of \eqref{eq1} says that the map $\binom{G}{\pi}$ in the middle is a morphism of representations. Now the claim follows easily by applying $\Hom_A(-,C)$ to the upper row.


Conversely, suppose that $h=\pi F$ representing $\xi_\gamma$ for some $F\in\T_f Y$. Since $\Ext_A^1(P_0,C)=0, \{\gamma(a)\pi(ta)\}_{a\in Q_1}\in\Der_R(A,\Hom(P_0,C))$ is inner, that is,
$$\gamma(a)\pi(ta)=G'(ha)P_0(a)-C(a)G'(ta)\text{ for some }G'\in\Hom_R(P_0,C).$$ Then
\begin{align*}G'f(ha)P_1(a)-C(a)G'f(ta) & =G'(ha)P_0(a)f(ta)-C(a)G'(ta)f(ta)\\
& =\gamma(a)\pi(ta)f(ta)=0,\end{align*} so $G'f\in\Ker\iota_C\subseteq\Hom_A(P_1,C)$. By the same diagram replacing $Gf$ by $G'f$, we see that $G'f$ also represents $\xi_\gamma\in\Ext_A^1(C,C)$. Hence $G'f-h\in\Img f^C$. So we can write $h$ as $h=Gf$ with $G'-G\in\Hom_A(P_0,C)$. Finally we get the second equation of \eqref{eq1} $$\gamma(a)\pi(ta)=G'(ha)P_0(a)-C(a)G'(ta)=G(ha)P_0(a)-C(a)G(ta).$$ Therefore, $\gamma\in\Img dp$.
\end{proof}

Consider
$$\xymatrix{
& Z(\Hom_A(P_1,P_0),\Rep_\alpha(A)) \ar[ld]_{q}\ar[rd]^{p} &\\
\Hom_A(P_1,P_0)\quad& & \Rep_\alpha(A)
}$$

\begin{theorem} \label{T:PP2Rep}
The projection $p$ is open.
\end{theorem}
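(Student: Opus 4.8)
The plan is to show that $p$ is a flat morphism with appropriate fibers — or, more directly, to exhibit $p$ locally as a composition of a $\GL_\alpha$-equivariant projection and a smooth (in fact, on the relevant locus, an affine-bundle-like) map, since openness follows once we know $dp$ is surjective at every point together with some equidimensionality control. Concretely, first I would note that $Z(Y,X)$ with $Y=\Hom_A(P_1,P_0)$ and $X=\Rep_\alpha(A)$ carries compatible group actions: $\Aut_A(P_1)\times\Aut_A(P_0)$ acts on the $f$-factor and $\GL_\alpha$ acts on both $\pi$ and $C$, and $p$ is $\GL_\alpha$-equivariant. So it suffices to prove openness in a neighborhood of each point $z=(f,\pi,C)$, and for that I would use the differential criterion: if $dp_z : \T_z Z \to \T_C X$ is surjective for all $z$ in a dense open subset of $p^{-1}(U)$ for every open $U$, and if the fibers of $p$ are equidimensional of the expected dimension, then $p$ is open (this is where some care is needed — see below).

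The main input is the preceding Lemma, which gives the exact sequence
\begin{equation*}
\T_z Z(Y,X)\xrightarrow{dp}\T_C X\to\frac{\N_C X}{\N_f Y\cap\N_C X}\to 0.
\end{equation*}
When $Y=\Hom_A(P_1,P_0)$ is the \emph{whole} presentation space, $\N_f Y$ is the full normal space $\N_f$ of the orbit $\mathcal{O}_f$, and the discussion just before the Lemma shows $\N_C = \Ext^1_A(C,C)$ embeds as a subspace of $\N_f$ via $\pi_*$. Hence $\N_C X \subseteq \N_f Y$, so the quotient $\N_C X/(\N_f Y \cap \N_C X)$ vanishes, and therefore $dp_z$ is surjective \emph{at every point} $z$ of $Z$. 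Thus $p$ is a submersion onto $\Rep_\alpha(A)$ (more precisely, a smooth morphism, since $Z$ and $\Rep_\alpha(A)$ are finite type over $k$ and $dp$ is everywhere surjective), and smooth morphisms are open. So the argument reduces to: (i) identify $Z$ in a neighborhood of $z$ with a scheme whose structure is transparent enough that "$dp$ surjective everywhere" really does give smoothness — this requires knowing $Z(Y,X)$ is, near $z$, cut out in an affine space by equations whose Jacobian has the expected rank, i.e. that the scheme-theoretic tangent computation in the Lemma reflects the actual local structure; and (ii) invoke that a morphism of finite-type $k$-schemes which is flat with smooth fibers (equivalently, for which the tangent map is everywhere surjective and dimensions of fibers are locally constant) is open.

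The hard part will be step (i): turning the pointwise surjectivity of $dp$ into genuine openness of $p$, rather than just openness of $p$ restricted to the smooth locus of $Z$. The subtlety is that $Z(Y,X)$ is defined by the closed condition $\pi f = 0$ together with an \emph{open} exactness condition, and a priori $Z$ could be non-reduced or singular. I expect the resolution to go as follows: restrict to the open subscheme where the exactness holds; there, the fiber $p^{-1}(C)$ is the set of pairs $(f,\pi)$ presenting $C$, which fibers over the choice of surjection $\pi:P_0\twoheadrightarrow C$ (an open subset of $\Hom_R(P_0,k^\alpha)$ cut by $A$-linearity, which is a linear condition, of dimension computable from $C$) and then over the choice of $f$ lifting the kernel inclusion — a torsor-like structure whose dimension is again computable from $C$ alone. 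Showing these fiber dimensions are locally constant in $C$ (which should follow because $C \mapsto \hom_A(P_0,C) = \dim C$ is locally constant, hence upper-semicontinuity forced to be constant) combined with everywhere-surjective $dp$ gives that $p$ is flat, and a flat finite-type morphism is open. Alternatively, and perhaps more cleanly, one can avoid fiber-dimension bookkeeping entirely by observing that surjectivity of $dp_z$ for all $z$, together with reducedness of the relevant chart of $Z$ — which one can establish directly since the defining equations $\pi f = 0$ are bilinear and, on the exactness locus, form a regular-sequence-like system — already yields that $p$ is a smooth morphism, and smooth $\Rightarrow$ open with no further input.
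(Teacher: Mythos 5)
Your first step coincides with the paper's: taking $Y=\Hom_A(P_1,P_0)$, the normal space $\N_f Y=\Hom_A(P_1,C)/\Img f^C$ contains $\N_C X$, so the exact sequence of the preceding lemma gives surjectivity of $dp_z\colon \T_z Z\to \T_C X$ at every point. The gap is in everything after that. Pointwise surjectivity of $dp$ does not yield smoothness, flatness, or openness of $p$ here, because $\Rep_\alpha(A)$ (and possibly $Z$) may be singular and non-reduced, and in that generality the tangent-map criterion simply fails: for instance, the inclusion of $\Spec k[\epsilon]/(\epsilon^2)$ into $\mathbb{A}^1$ at the origin has surjective differential at its unique point, yet its image is a single closed point, and the map is neither flat nor open. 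Your proposed patches do not repair this. The implication ``locally constant fiber dimension $+$ everywhere-surjective $dp$ $\Rightarrow$ flat'' is a miracle-flatness-type statement requiring a regular target (and Cohen--Macaulay source), which $\Rep_\alpha(A)$ is not; and ``reducedness of the relevant chart of $Z$ $+$ surjective $dp$ $\Rightarrow$ $p$ smooth'' is likewise not a valid inference --- the lemma's computation concerns first-order deformations at points and says nothing by itself about the local structure of $p$ over a singular or non-reduced base. If you insist on proving $p$ smooth, you would need a genuinely different argument (e.g.\ a functorial infinitesimal-lifting argument, lifting $\pi$ and then $f$ through square-zero extensions using projectivity of $P_0,P_1$ and Nakayama), not the tangent-space computation you invoke.

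The paper closes the gap without ever touching flatness, and the maneuver it uses is exactly what your proposal is missing: the lemma was proved for an \emph{arbitrary} $\GL_\alpha$-stable subvariety $X$, not only for $X=\Rep_\alpha(A)$. By Chevalley, $\Img p$ is constructible, so openness is equivalent to stability under generization: if $\Img p\cap\overline{\{x\}}\neq\emptyset$ for a point $x$, one must show $x\in\Img p$ (and one reduces to $\GL_\alpha$-stable points). Applying the lemma with $X=\overline{\{x\}}$ gives surjectivity of $dp$ onto $\T_C\overline{\{x\}}$ at every point of $Z(Y,\overline{\{x\}})$, hence $p\colon Z(Y,\overline{\{x\}})\to\overline{\{x\}}$ is dominant; the constructible set $\Img p\cap\overline{\{x\}}$ is then dense in $\overline{\{x\}}$ and so contains its generic point $x$. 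This relative use of the tangent-space lemma inside each point-closure, combined with constructibility, is what substitutes for the smoothness or flatness your argument would need but cannot deliver.
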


\begin{proof}
Let $Y=\Hom_A(P_1,P_0)$, then for any $\GL_\alpha$-stable subvariety $X$ of $\Rep_\alpha(A)$, $\T_z Z(Y,X)\xrightarrow{dp}\T_C X$ is surjective because $\N_fY=\Hom_A(P_1,C)/\Img f^C\supseteq\N_C X$. Hence, the projection $p:Z(Y,X)\to X$ is dominant.

Now let $X=\Rep_\alpha(A)$, then by a theorem of Chevalley $\Img p$ is constructible. We need to prove $\Img p$ is open in $X$. This is now equivalent to showing that $\Img p$ is stable under any generization, namely, if $x$ is a point in the scheme $X$ and $\Img p\cap \overline{\{x\}}$ is non-empty, then $x\in\Img p$ \cite[Exercise III.3.18]{Ha}. In fact, it is not hard to show that it is sufficient to consider the $\GL_\alpha$-stable points of $X$. By the above lemma, we know that $\Img p\cap \overline{\{x\}}$ is dense in $\overline{\{x\}}$, so $\Img p$ must contain the generic point $x$ of $\overline{\{x\}}$.
\end{proof}

\begin{lemma}\
The projection $q$ is a principle $\GL_\alpha$-bundle over its image.
\end{lemma}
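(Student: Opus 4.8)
The plan is to realize $q$ as the bundle of $R$-linear frames of the family of cokernels $\{\Coker f\}$. Write $Y=\Hom_A(P_1,P_0)$, $X=\Rep_\alpha(A)$, $Z=Z(Y,X)$ and $\alpha_0=\dv P_0$. First I would record the $\GL_\alpha$-action on $Z$: the group acts trivially on the $f$-coordinate, by base change on the $X$-coordinate, and by $g\cdot\pi=g\pi$ on the $\Hom_R(P_0,k^\alpha)$-coordinate. Since for $(f,\pi,C)\in Z$ the map $g\colon C\to g\cdot C$ is an isomorphism of representations, $(f,g\pi,g\cdot C)$ again lies in $Z$, and $q$ is $\GL_\alpha$-invariant. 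This action is free, because $g\pi=\pi$ with $\pi$ surjective forces $g=\Id$.

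Next I would analyse the fibres. A triple $(f,\pi,C)\in Z$ amounts to an $R$-linear surjection $\pi\colon P_0\to k^\alpha$ with $\ker\pi=\Img f$, together with the representation structure it forces on $k^\alpha$: the maps $C(a)$ are uniquely determined by $C(a)\pi(ta)=\pi(ha)P_0(a)$ because $\pi(ta)$ is onto, and this $C$ automatically satisfies the relations of $A$ since $P_0$ does and $\pi$ is surjective. Factoring $\pi$ through $\Coker f$ therefore identifies $q^{-1}(f)$ with the set of $R$-module isomorphisms $\Coker f\xrightarrow{\sim}k^\alpha$. Such an isomorphism exists precisely when $\dv\Coker f=\alpha$, i.e. $\rank f(v)=\alpha_0(v)-\alpha(v)$ for all $v$; thus $\Img q$ is the locally closed constant-rank locus $\{f\in Y\mid\dv\Coker f=\alpha\}$, and on it $\GL_\alpha$ acts freely and transitively on every fibre of $q$. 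So $q$ is at least a $\GL_\alpha$-torsor over its image; it remains to produce Zariski-local trivializations.

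For this, over $\Img q$ the subspaces $\Img f(v)\subseteq P_0(v)$ have constant dimension, hence form a vector subbundle $\mc E_v$ of the trivial bundle with fibre $P_0(v)$; consequently $\mc C:=\bigoplus_v\big((\Img q\times P_0(v))/\mc E_v\big)$ is an $R$-graded vector bundle on $\Img q$ whose fibre at $f$ is $\Coker f$, and by the previous paragraph $q$ is the associated bundle of $R$-linear frames of $\mc C$. Concretely, given $f_0\in\Img q$ pick complements $P_0(v)=\Img f_0(v)\oplus W(v)$ and a fixed $R$-isomorphism $W:=\bigoplus_vW(v)\cong k^\alpha$; on the Zariski-open set $V$ where $P_0(v)=\Img f(v)\oplus W(v)$ still holds, let $\pi_f\colon P_0\to W\cong k^\alpha$ be the projection along $\bigoplus_v\Img f(v)$ and let $C_f$ be the induced structure. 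Then $f\mapsto(f,\pi_f,C_f)$ is a section of $q$ over $V$, and $(f,\pi,C)\mapsto\big(f,\br\pi\circ(\br{\pi_f})^{-1}\big)$, where $\br\pi,\br{\pi_f}\colon\Coker f\xrightarrow{\sim}k^\alpha$ are induced by $\pi,\pi_f$, gives an isomorphism $q^{-1}(V)\cong V\times\GL_\alpha$ over $V$. Assembling over a covering of $\Img q$ shows $q$ is a principal $\GL_\alpha$-bundle.

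The one point requiring care is that the section above is genuinely a morphism of varieties: that $\Img f(v)$, the projection onto $W(v)$ along it, and the induced structure maps $C_f(a)$ all depend rationally on $f$, regularly on a Zariski neighbourhood of $f_0$. This is exactly where the constant-rank property of $\Img q$ enters — one selects, locally, a maximal linearly independent set of columns of each $f(v)$ and writes the projection by Cramer's rule — and once it is in hand, the product structure $\GL_\alpha=\prod_v\GL_{\alpha(v)}$ makes freeness, fibrewise transitivity, and local triviality combine into the principal-bundle claim with nothing further to check, beyond the already-noted fact that each $C_f$ lands in $\Rep_\alpha(A)$.
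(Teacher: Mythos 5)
Your argument is correct, but it is worth noting how it relates to the paper: the paper does not spell out a proof at all, it simply observes that the proof of \cite[Corollary 3.4]{R} (Reineke's framed-quiver setting, i.e.\ path algebras without relations) carries over verbatim. What you have written is, in effect, that carried-over argument made self-contained: you identify $\Img q$ as the locally closed locus where $\dv\Coker f=\alpha$, observe that a point of the fibre over $f$ is exactly an $R$-linear frame $\Coker f\xrightarrow{\sim}k^\alpha$ (the structure maps $C(a)$ and the relations of $A$ coming along for free because $\Ker\pi=\Img f$ is a subrepresentation and $\pi$ is onto), check freeness and fibrewise simple transitivity of $\GL_\alpha$, and then produce genuine Zariski-local sections by choosing complements to $\Img f(v)$ on the constant-rank locus, with regularity supplied by Cramer's rule. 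This is exactly the content that the citation delegates, so the two routes buy the same thing; yours has the advantage of making explicit both the image of $q$ and the fact that local triviality holds in the Zariski topology, at the cost of re-doing a standard construction. The only caveat, which applies equally to the paper's one-line proof, is that everything here is at the level of varieties: $\Rep_\alpha(A)$ is a priori a scheme that may be non-reduced, and your trivializations identify $q^{-1}(V)$ with $V\times\GL_\alpha$ after passing to reduced structures, which is the level of generality the lemma is used at.
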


\begin{proof} The same proof as \cite[Corollary 3.4]{R} also works here.
\end{proof}

\begin{lemma}
Given any two presentations $f,f'\in\Hom_A(P_1,P_0)$ of $C$ and an automorphism $g$ of $C$, we can complete the diagram with $g_i\in\Aut_A(P_i)$~for~$i=0,1$.
\begin{displaymath} \begin{matrix}
P_1 & \xrightarrow{f} & P_0 & \xrightarrow{\pi} & C & \to & 0\\
\quad\dashuparrow{_{g_1}} & & \quad\dashuparrow{_{g_0}} & & \ \ \big\uparrow{_g} &\\
P_1 & \xrightarrow{f'} & P_0 & \xrightarrow{\pi'} & C & \to & 0\\
\end{matrix} \end{displaymath}
\end{lemma}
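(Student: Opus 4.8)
The plan is to isolate one auxiliary statement and apply it twice. \emph{Claim:} if $P$ is a projective representation, $f_1,f_2\colon P\to M$ are two epimorphisms onto a representation $M$, and $\varphi\in\Aut_A(M)$, then there is $\sigma\in\Aut_A(P)$ with $f_1\sigma=\varphi f_2$. Granting the Claim, I would first apply it with $P=P_0$, the epimorphisms $\pi$ and $\pi'$, and $\varphi=g$ to obtain $g_0\in\Aut_A(P_0)$ with $\pi g_0=g\pi'$, which is the right-hand square. Then $\pi(g_0f')=g\pi'f'=0$, so $g_0f'$ factors through $\Ker\pi=\Img f$; and since $\pi'=g^{-1}\pi g_0$ we get $\Img(g_0f')=g_0(\Ker\pi')=\Ker(\pi'g_0^{-1})=\Ker(g^{-1}\pi)=\Img f$. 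Hence $f$ and $g_0f'$ are two epimorphisms $P_1\twoheadrightarrow\Img f$, and a second application of the Claim (with $P=P_1$, these two epimorphisms, and $\varphi=\Id$) yields $g_1\in\Aut_A(P_1)$ with $fg_1=g_0f'$ as maps into $\Img f$, hence into $P_0$. This is the left-hand square, so the completed diagram commutes with $g_0,g_1$ automorphisms.

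For the Claim I would route everything through a projective cover, because the evident lifts produced by projectivity of $P$ need not be mutually inverse: the obstruction is that $\Ker f_i$ need not be superfluous in $P$. Let $p\colon P(M)\to M$ be a projective cover, which exists since $A$ is finite-dimensional. Lifting $f_1$ through $p$ gives $v\colon P\to P(M)$ with $pv=f_1$; since $f_1$ is onto we have $\Img v+\Ker p=P(M)$, and as $\Ker p$ is superfluous $v$ is onto, hence a split epimorphism ($P(M)$ being projective), which yields an identification $P\cong P(M)\oplus Q$ with $Q$ projective and $f_1=(p,0)$ under this identification. The same for $f_2$ gives $P\cong P(M)\oplus Q'$ with $f_2=(p,0)$, and Krull--Schmidt cancellation forces $Q\cong Q'$, so after one further isomorphism both identifications use the same complement. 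Lifting $\varphi$ through $p$ gives $\varphi_P\in\End_A(P(M))$ with $p\varphi_P=\varphi p$, and the standard projective-cover argument --- compose with a lift of $\varphi^{-1}$, observe the product is congruent to $\Id$ modulo the superfluous submodule $\Ker p$, hence surjective, hence invertible --- shows $\varphi_P\in\Aut_A(P(M))$. Transporting $\varphi_P\oplus\Id_Q$ back through the two identifications of $P$ then produces $\sigma\in\Aut_A(P)$ with $f_1\sigma=\varphi f_2$.

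The one genuine obstacle is the one already flagged: turning ``a morphism that completes the square'' into ``an automorphism that completes the square''. Lifting along projective objects alone is not enough; one needs that $\Rep(A)$ has projective covers, so that the relevant kernels become superfluous, together with the Krull--Schmidt property to match up the superfluous-free complements. Everything else is routine diagram chasing with the exact sequences $P_1\xrightarrow{f}P_0\xrightarrow{\pi}C\to0$ and $P_1\xrightarrow{f'}P_0\xrightarrow{\pi'}C\to0$.
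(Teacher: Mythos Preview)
Your proof is correct and follows essentially the same approach as the paper's. Both arguments route the construction through a projective cover of the target, use the superfluousness of its kernel to upgrade lifts to surjections, and invoke Krull--Schmidt to match the complementary projective summands; both then iterate the same idea on $P_1\twoheadrightarrow\Img f$. You have simply abstracted the core step into a reusable Claim and supplied more detail (in particular the argument that the lift of $\varphi$ is invertible), whereas the paper carries out the construction directly and tersely.
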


\begin{proof} $P_0\xrightarrow{\pi} C\to 0$ can be split into $P_0'\oplus P_0''\xrightarrow{(\pi',0)} C\to 0$, where $P_0'$ is the projective cover of $C$. Then the map $g\pi'$ induces a morphism $P_0\xrightarrow{g_0'} P_0'$, which is surjective because $P_0'$ is the projective cover. The kernel of $g_0'$ must be isomorphic to $P_0''$, so it induces a projection $g_0'':P_0\to P_0''$. We set $g_0$ to be $\binom{g_0'}{g_0''}$, which is an isomorphism. Apply the same construction to $P_1 \xrightarrow{f} \Ker(\pi)$, then we get an isomorphism $g_1$.
\end{proof}

\begin{corollary} \label{PPRep}
The map $pq^{-1}$ gives a bijection between $\Aut_A(P_1)\times\Aut_A(P_0)$-stable subvariety of $\Img q$ and $\GL_\alpha$-stable subvariety of $\Img p$, preserving openness, closure, and irreducibility.
\end{corollary}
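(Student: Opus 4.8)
The plan is to realize $pq^{-1}$ and $qp^{-1}$ as mutually inverse maps between the two posets of stable subvarieties, using the geometry of the two projections out of $Z := Z(\Hom_A(P_1,P_0),\Rep_\alpha(A))$ together with the two lemmas just established. Write $H := \Aut_A(P_1)\times\Aut_A(P_0)$ and $G := \GL_\alpha$. The product $H\times G$ acts on $Z$: the factor $H$ acts on the presentation coordinate by $(g_1,g_0)f = g_0 f g_1^{-1}$, on $\pi\colon P_0\to C$ by $\pi\mapsto \pi g_0^{-1}$, and trivially on $C$; while $G$ acts by base change on $C$ and $\pi$ and trivially on $f$. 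Thus $q$ is $G$-invariant and $H$-equivariant, and $p$ is $H$-invariant and $G$-equivariant. In particular $q^{-1}$ sends $H$-stable subvarieties of $\Img q$ to $(H\times G)$-stable subvarieties of $Z$, which $p$ then sends to $G$-stable subvarieties of $\Img p$, and dually $p^{-1}$ followed by $q$ goes the other way, so the two assignments $\Phi := pq^{-1}$ and $\Psi := qp^{-1}$ at least land in the right places.

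Next I would identify the fibers and establish the crucial \emph{saturation} property. By the principal bundle lemma, each fiber $q^{-1}(f)$ is a single $G$-orbit (all presentations with cokernel $f$ give isomorphic $C$'s). By the completion lemma applied with $g=\Id_C$, any two triples $(f,\pi,C),(f',\pi',C)\in p^{-1}(C)$ differ by an element of $H$, so $p^{-1}(C)$ is a single $H$-orbit; and since $H$ is a Zariski-open subset of the vector space $\End_A(P_1)\oplus\End_A(P_0)$, it is connected and irreducible, whence every fiber of $p$ is irreducible. Now: (i) for $G$-stable $X\subseteq\Img p$, the preimage $p^{-1}(X)$ is a union of fibers of $q$ — because $f$ alone determines $C$ up to the $G$-action — so $p^{-1}(X)=q^{-1}\!\big(q(p^{-1}(X))\big)$; dually, (ii) for $H$-stable $Y\subseteq\Img q$, the preimage $q^{-1}(Y)$ is a union of fibers of $p$ (by the completion lemma), so $q^{-1}(Y)=p^{-1}\!\big(p(q^{-1}(Y))\big)$. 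Granting (i) and (ii), $\Psi\Phi(Y)=q\big(p^{-1}(p(q^{-1}(Y)))\big)=q(q^{-1}(Y))=Y$ since $Y\subseteq\Img q$, and symmetrically $\Phi\Psi=\Id$; hence $\Phi$ and $\Psi$ are mutually inverse bijections.

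Finally I would check that open, closed, and irreducible subvarieties are preserved in both directions. The restriction of $q$ over a subvariety $Y$ of $\Img q$ is again a principal $G$-bundle, in particular a Zariski-locally trivial, open, surjective morphism with connected irreducible fibers; hence $Y\mapsto q^{-1}(Y)$ takes open to open, closed to closed, and (as $G$ is irreducible) irreducible to irreducible, and so does its inverse on $G$-saturated subvarieties of $Z$. On the $p$-side, $p$ is open by Theorem~\ref{T:PP2Rep}, so $p(q^{-1}(Y))$ is open in $\Img p$ whenever $Y$ is open; because $p$ is surjective onto $\Img p$ and $q^{-1}(Y)$ is $p$-saturated, $p$ commutes with complementation on such subvarieties, so closed $Y$'s produce closed images; and because each fiber of $p$ is irreducible while the restriction of $p$ to a $p$-saturated subvariety is still open and surjective onto its image, such a subvariety is irreducible if and only if its $p$-image is. Combining the two sides gives all three preservation statements, and running the symmetric argument (now using that $q$, too, is open) handles $qp^{-1}$. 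I expect the only real obstacle to be the irreducibility clause, since $p$ — unlike $q$ — is not a principal bundle, so local triviality is unavailable; the remedy is exactly the two facts isolated above, that the fibers of $p$ are homogeneous under the irreducible group $H$ and that an open surjective morphism with irreducible base and irreducible fibers has irreducible source, with the obvious converse. The rest is the routine bookkeeping needed to see that ``stable subvariety'' matches up consistently across $\Img q$, $Z$, and $\Img p$ under the two commuting group actions.
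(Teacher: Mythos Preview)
Your proposal is correct and is precisely the argument the paper leaves implicit: the corollary is stated without proof, as an immediate consequence of the preceding principal-bundle lemma, the completion lemma, and Theorem~\ref{T:PP2Rep}. You have supplied the standard details---that the fibers of $p$ and $q$ are single $H$- and $G$-orbits respectively, that this yields the saturation identities making $pq^{-1}$ and $qp^{-1}$ mutual inverses, and that openness of both projections together with irreducibility of the fibers transports the three topological properties---and nothing is missing.
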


We need the rest of this section sporadically, in fact mostly in the last section, so readers can come back here whenever needed. Among all presentations of $M\in\Rep_\alpha(A)$, the minimal projective presentation $P(\beta_1(M))\to P(\beta_0(M))\to M\to 0$ may be the most interesting one. Let us give a concrete way to compute the Betti-vectors $\beta_0(M)$ and $\beta_1(M)$ for any $M\in\Rep(A)$. Suppose that $\cdots\to P_2\to P_1\to P_0\to M\to 0$ is the minimal resolution of $M$. Apply $\Hom_A(-,S_v)$ to the resolution and we get \begin{equation*}0\to\Hom_A(M,S_v)\to\Hom_A(P_0,S_v)\xrightarrow{d_1}\Hom_A(P_1,S_v)\xrightarrow{d_2}\Hom_A(P_2,S_v)\to\cdots.\end{equation*}
The minimality of the resolution implies that $d_i$ are all zero maps. So $$\beta_i(M)_v=\hom_A(P_i,S_v)=\ext_A^i(M,S_v).$$

To proceed further, we need the canonical injective resolution of $S_v$. Suppose that $A=kQ/I$ and $Q_2$ be a basis of $\genfrac{}{}{}{}{I}{JI+IJ}$, where $J$ is the Jacobson radical~of~$A$.
Then $A$ has the following canonical (or simplicial)~$A$-$A$-bimodule~resolution~\cite[(1.4)]{BK}:
$$\cdots\to\bigoplus_{r\in Q_2}Ae_{hr}\otimes e_{tr}A\xrightarrow{d_1}\bigoplus_{a\in Q_1}Ae_{ha}\otimes e_{ta}A\xrightarrow{d_0}\bigoplus_{v\in Q_0}Ae_v\otimes e_v A\xrightarrow{\mu}A\to 0,$$ where $\mu$ is the multiplication of $A$, $d_0$ is defined by $d_0(e_{ha}\otimes e_{ta})=a\otimes e_{ta}-e_{ha}\otimes a,$ and $d_1$ is defined by $d_1(e_{hr}\otimes e_{tr})=\sum_{a\in Q_1}{r_2^a\otimes r_1^a}$, where $r=r_2^a a r_1^a$.
Tensoring the above resolution with $M$ yields the canonical projective resolution of $M$:
\begin{equation}\label{canproj} \cdots\to\bigoplus_{r\in Q_2}P_{hr}\otimes M(tr)\xrightarrow{}\bigoplus_{a\in Q_1}P_{ha}\otimes M(ta)\xrightarrow{}\bigoplus_{v\in Q_0}P_v\otimes M(v)\xrightarrow{}M\to 0.\end{equation}
Applying the dual construction to $M=S_v$, we obtain the canonical injective resolution of $S_v$:
\begin{equation*}0\to S_v\to I_v\xrightarrow{\varphi^*}\bigoplus_{ha=v}I_{ta}\xrightarrow{\psi^*}\bigoplus_{hr=v}I_{tr}\to\cdots.\end{equation*}
Here, $I_v$ is the indecomposable injective representation corresponding to the vertex $v$, $\varphi^*$ is given by $\varphi^*(p^*)=p^*(a)$,
and the restriction of $\psi^*$ on $I_{ta}$ is $\psi^*(p^*)=p^*(a^{\minus 1}r)$, where $a^{\minus 1}$ is the formal inverse of $a$. Apply $\Hom_A(M,-)$ to this resolution and take the trivial dual, we obtain the complex
\begin{equation}\label{alphagamma}\cdots\to\bigoplus_{hr=v}M(tr)\xrightarrow{\psi}\bigoplus_{ha=v}M(ta)\xrightarrow{\varphi}M(v).\end{equation}
We conclude that $\beta_0(M)_v=\dim\Coker\varphi$ and $\beta_1(M)_v=\dim\Ker\varphi/\Img\psi$.

Let $K^2(\proj A)$ be the set of all presentations in $K^b(\proj A)$, then all the indecomposable objects in $K^2(\proj A)$ are in one-to-one correspondence with $P_v[1]$ and minimal projective presentations of $M\in\Rep(A)$. We define the {\em AR-transformation} $\tau$ on $K^2(\proj A)$ as follows. For any indecomposable $f\neq P_v,P_v[1]$, we define $\tau(f)$ to be the minimal projective presentation of $\tau(\Coker(f))$, the classical Auslander-Reiten transformation \cite[Definition IV.2.3]{ASS} of $\Coker(f)$; define $\tau(P_v)=P_v[1]$ and $\tau(P_v[1])$ to be the minimal projective presentation of $I_v$. Then we can extend the definition additively to the whole $K^2(\proj A)$. It follows immediately from the classical theory \cite[Proposition IV.2.10]{ASS} that $\tau$ is bijective on $K^2(\proj A)$ and let $\tau^{\minus 1}$ be its inverse. In this way, the preprojective and preinjective components of this algebra are connected pictorially by $$\xymatrix{ \cdots \ar@{--}[r] & P_v \ar@{--}[r] & P_v[1] \ar@{--}[r] & ({\rm projective\ presentation\ of\ }I_v) \ar@{--}[r] & \cdots. }$$

\section{Subpresentations of a general presentation} \label{S:SP}

We first briefly recall the construction of the quiver Grassmannian. For dimension vectors $\alpha=\beta+\gamma$, we define $\Gr\binom{\alpha}{\gamma}:=\prod_{v\in Q_0}\Gr\binom{\alpha(v)}{\gamma(v)}$, where $\Gr\binom{\alpha(v)}{\gamma(v)}$ is the usual Grassmannian variety of $\gamma(v)$-dimensional subspaces of $k^{\alpha(v)}$.
We define the variety
\begin{gather*}Z\binom{\alpha}{\gamma}=\{(W,V')\in \Rep_{\alpha}(A) \times\Gr\binom{\alpha}{\gamma}\mid V'\text{ is a subrepresentation of }W\},\\
\intertext{and for fixed $V\in\Rep_\gamma(A)$ and $U\in\Rep_\beta(A)$, its subvariety}
Z_V^U=\{(W,V')\in Z\binom{\alpha}{\gamma}\mid V'\cong V,W/V'\cong U\}.\end{gather*}

Let $p:Z\binom{\alpha}{\gamma}\to\Rep_{\alpha}(A)$ be the first factor projection $(W,V')\mapsto W$.
\begin{definition}
We define the {\em quiver Grassmannian} $\Gr_\gamma(W)$ to be the scheme-theoretic fibre $p^{\minus 1}(W)$, and its subvarieties $\Gr_V^U(W)=(p|_{Z_V^U})^{\minus 1}(W)$.
\end{definition}

\begin{lemma} \label{L:QG}
Let $P,P^s,P^q$ be projective representations of $A$, then $\Gr_{P^s}^{P^q}(P)$ is an open and irreducible subvariety of the smooth variety
$\Gr_{\gamma}(P)$, where $\gamma$ is the dimension vector of $P^s$.
\end{lemma}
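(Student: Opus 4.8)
The plan is to reduce the statement to a fibration/submersion argument over an honest Grassmannian. First I would analyze the structure of $\Gr_\gamma(P)$ for a projective $P$. Since $P=P(\beta_0)$ is projective, the quiver Grassmannian $\Gr_\gamma(P)$ parametrizes subrepresentations $V'\subseteq P$ of dimension vector $\gamma$; forgetting the $A$-module structure gives a map $\Gr_\gamma(P)\to\prod_{v}\Gr\binom{\beta_0(v)\cdot?}{\gamma(v)}$ into the ordinary product of Grassmannians, but the key point is that being a subrepresentation is a \emph{closed} condition cut out by the arrow maps, and over a projective $P$ this condition behaves well. I would argue that $\Gr_\gamma(P)$ is smooth by exhibiting its tangent space at a point $V'\hookrightarrow P$ (with cokernel $C$) as $\Hom_A(V',C)$ and showing this has locally constant dimension — equivalently, that $\Ext^1_A(V',C)$ and the obstruction theory vanish in the relevant sense. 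Since $P$ is projective, applying $\Hom_A(-,C)$ to $0\to V'\to P\to C'\to 0$ (where $C'=P/V'$) and using $\Ext^1_A(P,C)=0$ gives control; this is exactly the type of computation already used in the proof of the unnamed lemma following Theorem~\ref{T:PP2Rep}, so I would mimic that diagram chase.

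Next, for the subvariety $\Gr_{P^s}^{P^q}(P)$: here we additionally demand $V'\cong P^s$ and $P/V'\cong P^q$, both projective. The openness claim I would establish by noting that the locus where a subrepresentation is isomorphic to a \emph{given} projective $P^s$ is open in $\Gr_\gamma(P)$ — being projective of a fixed isomorphism type is an open condition on $V'$ (the rank conditions defining $P^s$ among $\gamma$-dimensional representations are open, and for a subrepresentation of $P$ the quotient $P/V'$ is then automatically forced once $V'$ is projective with the right dimension, via the split exact sequence $0\to V'\to P\to P/V'\to 0$ — projectivity of $V'$ makes it split, so $P/V'$ is a direct summand of $P$, hence projective, and its isomorphism type is determined by $\delta$-vectors). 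So the single open condition "$V'\cong P^s$" already implies "$P/V'\cong P^q$" for the appropriate $P^q$ with $\beta_0 = \gamma + (\text{class of }P^q)$. This gives openness.

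For irreducibility I would use the group action: $\Aut_A(P)$ acts on $\Gr_\gamma(P)$, and I claim it acts transitively on $\Gr_{P^s}^{P^q}(P)$. Indeed, given two split embeddings $P^s\hookrightarrow P$ and $P^s\hookrightarrow P$ with complements both isomorphic to $P^q$, the Krull–Schmidt property together with the lemma on completing diagrams (the third lemma after Theorem~\ref{T:PP2Rep}, applied to projective covers / split sequences) produces an automorphism of $P$ carrying one to the other. A transitive image of an irreducible group (and $\Aut_A(P)$, being an open subvariety of the affine space $\End_A(P)$, is irreducible) is irreducible, so $\Gr_{P^s}^{P^q}(P)$ is irreducible. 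Combined with openness in the smooth $\Gr_\gamma(P)$, this finishes the proof.

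I expect the main obstacle to be the smoothness of $\Gr_\gamma(P)$ and the precise tangent-space identification: one must check that the scheme structure on the quiver Grassmannian (as the fibre of $p$) is reduced and that the tangent space $\Hom_A(V',P/V')$ has dimension independent of the point on the component we care about. The reduction to "$V'$ projective $\Rightarrow$ sequence splits $\Rightarrow$ everything is a product of ordinary Grassmannian-type strata" is the conceptual heart, and getting the openness of the isomorphism-type condition stated cleanly (rather than just constructibly) is where I would be most careful.
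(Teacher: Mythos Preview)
Your approach is correct but differs from the paper's for both irreducibility and openness. For irreducibility, the paper exhibits $\Gr_{P^s}^{P^q}(P)$ as the image of the irreducible open set $\Hom_A(P,P^q)^\gamma\subseteq\Hom_A(P,P^q)$ (morphisms of general corank $\gamma$) under the map sending a morphism to its kernel, noting that kernels of surjections onto projectives are projective; you instead use transitivity of the $\Aut_A(P)$-action, which is equally valid. For openness, the paper compares tangent-space dimensions, citing \cite[Corollary II.5]{F} to identify $\T_{P^s}\Gr_{P^s}^{P^q}(P)$ with the image of the surjective restriction $\Hom_A(P,P^q)\to\Hom_A(P^s,P^q)$; you instead argue that the locus $\{V'\cong P^s\}$ is open and then use splitting plus Krull--Schmidt to get $P/V'\cong P^q$ for free. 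Your route is more self-contained (no appeal to \cite{F}), while the paper's tangent-space argument has the side benefit of immediately yielding the dimension formula $\dim\Gr_{P^s}^{P^q}(P)=\hom_A(P^s,P^q)$ used in Lemma~\ref{bundle}. One refinement to make your argument precise: the phrase ``rank conditions defining $P^s$ are open'' is vague---the clean reason the isomorphism type $P^s$ is an open condition is that $\Ext^1_A(P^s,P^s)=0$, so by Lemma~\ref{HF} the $\GL_\gamma$-orbit of $P^s$ is open in $\Rep_\gamma(A)$, and this pulls back along the universal subrepresentation on $\Gr_\gamma(P)$. For smoothness both you and the paper rely on the tangent-space identification $\T_{V'}\Gr_\gamma(P)\cong\Hom_A(V',P/V')$ from \cite[Lemma~3.2]{S}, with constancy of the dimension clear on the open piece $\Gr_{P^s}^{P^q}(P)$ where it is actually used.
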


\begin{proof} It is known \cite[Lemma 3.2]{S} that the tangent space of $\Gr_\gamma(W)$ at $V$ can be identified with $\Hom_A(V,W/V)$. In our situation, $\hom_A(P^s,P/P^s)$ is constant, so $\Gr_{\gamma}(P)$ is smooth.

There is an open dense subset $\Hom_A(P,P^q)^\gamma$ of $\Hom_A(P,P^q)$, in which the morphisms have the {\em general corank} $\gamma$.
So the image of the map $\Hom_A(P,P^q)^\gamma\to\Gr_\gamma(P)$ sending a presentation to its kernel is irreducible. But the image is exactly $\Gr_{P^s}^{P^q}(P)$, since the kernel of any epimorphism to a projective representation must be projective.

It is proved in \cite[Corollary II.5]{F} that the tangent space of $\Gr_{P^s}^{P^q}(P)$ at $P^s$ can be identified with the image of the surjective map $\Hom_A(P,P^q)\xrightarrow{\Hom_A(\iota,P^q)}\Hom_A(P^s,P^q)$, where $\iota$ is any embedding of $P^s$ into $P$. Hence, $\Gr_{P^s}^{P^q}(P)$ is open in $\Gr_{\gamma}(P)$.
\end{proof}

Given any projective presentation $P_1\xrightarrow{f} P_0$, a presentation $P_1'\xrightarrow{f'} P_0'$ is called {\em subpresentation} of $f$ if it is a subcomplex of $f$:
$$\xymatrix{
P_1\ar[r]^f & P_0\\
P_1'\ar@{^(->}[u]\ar[r]_{f'} & P_0'\ar@{^(->}[u]}$$
\noindent We call $f'$ a {\em genuine} subpresentation if $P_1''=P_1/P_1'$ and $P_0''=P_0/P_0'$ are both projective.

Given any two projective presentations $f'\in\Hom_A(P_1',P_0')$ and $f''\in\Hom_A(P_1'',P_0'')$ with cokernel $L$ and $N$ respectively,
there is an induced double complex with exact rows and columns:

\begin{equation}\label{eq:double}
\begin{matrix}
\Hom_A(L,N)\hookrightarrow & \Hom_A(P_0',N)\xrightarrow{f_N'} & \Hom_A(P_1',N)\\
\uparrow_{} & \twoheaduparrow & \twoheaduparrow\\
\Hom_A(L,P_0'')\hookrightarrow & \Hom_A(P_0',P_0'')\xrightarrow{f_0'} & \Hom_A(P_1',P_0'')\\
\uparrow & \quad\uparrow_{f_0''} & \ \quad\uparrow_{\minus f_1''}\\
\Hom_A(L,P_1'')\hookrightarrow & \Hom_A(P_0',P_1'')\xrightarrow{f_1'} & \Hom_A(P_1',P_1'')
\end{matrix} \end{equation}
Here $f_N'=\Hom_A(f',N),f_i'=\Hom_A(f', P_i'')$, and $f_i''=\Hom_A(P_i',f'').$


\begin{definition}
We define $\E(f',f''):=\Hom_{K^b(\proj A)}(f'[\minus 1],f'')=\Coker (f_0',\minus f_1'')$. In addition, we denote $\Ker (f_0',\minus f_1'')$ by $\Ho(f',f'')$.
\end{definition}

\noindent An easy diagram chasing can show:

\begin{lemma} \label{DC}
$\E(f',f'')\cong\Coker f_N'$. In particular, $\E(f',f'')\supseteq\Ext_A^1(L,N)$ and if $f'$ is injective, then they are equal.
\end{lemma}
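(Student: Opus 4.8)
The plan is to extract the claim about $\E(f',f'')$ directly from the double complex \eqref{eq:double}, using only the exactness of its rows and columns together with the snake-style diagram chase that computes a cokernel along an "anti-diagonal." Concretely, $\E(f',f'')=\Coker(f_0',-f_1'')$ is the cokernel of the map from $\Hom_A(P_0',P_0'')\oplus\Hom_A(P_1',P_1'')$ landing in $\Hom_A(P_1',P_0'')$; I want to identify this with $\Coker f_N'$, where $f_N':\Hom_A(P_0',N)\to\Hom_A(P_1',N)$ sits on the top row. First I would note that the rightmost column $\Hom_A(P_1',P_1'')\to\Hom_A(P_1',P_0'')\to\Hom_A(P_1',N)$ is exact (it comes from applying $\Hom_A(P_1',-)$ to $P_1''\xrightarrow{f''}P_0''\xrightarrow{}N\to0$, and $\Hom_A(P_1',-)$ is exact since $P_1'$ is projective), and likewise the middle column is exact with $\Hom_A(P_0',P_0'')\to\Hom_A(P_0',N)$ surjective.

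The heart of the argument is a small diagram chase: given a class in $\Hom_A(P_1',P_0'')$ modulo the images of $f_0'$ and $-f_1''$, push it to $\Hom_A(P_1',N)$; I must check that this passes to $\Coker f_N'$ (i.e. is well-defined) and that the resulting map $\E(f',f'')\to\Coker f_N'$ is both injective and surjective. Surjectivity uses that $\Hom_A(P_0',P_0'')\twoheadrightarrow\Hom_A(P_0',N)$ so any class in $\Coker f_N'$ lifts into the square; injectivity uses exactness of the right column to absorb any element killed in $\Hom_A(P_1',N)$ into $\Img(-f_1'')$, and exactness of the top-left corner to absorb the remaining ambiguity into $\Img f_0'$. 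This is the routine "the cokernel of the boundary map equals the cokernel read off one row" lemma for a first-quadrant double complex with exact rows and columns, and I would present it as such rather than spelling out every element.

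For the second sentence, I would apply $\Hom_A(-,N)$ to the four-term exact sequence $0\to K'\to P_1'\xrightarrow{f'}P_0'\xrightarrow{}L\to0$ (splitting it as $0\to K'\to P_1'\to\Img f'\to0$ and $0\to\Img f'\to P_0'\to L\to0$). This yields $0\to\Hom_A(L,N)\to\Hom_A(P_0',N)\xrightarrow{f_N'}\Hom_A(P_1',N)$ continuing with the connecting map into $\Ext_A^1(L,N)$, and the cokernel of $f_N'$ therefore contains $\Ext_A^1(L,N)$ as the subspace coming from $\Ext_A^1(L,N)\hookrightarrow\Ext_A^1(\Img f',N)\cong\Coker f_N'$ — with equality precisely when $\Ext_A^1(K',N)$ contributes nothing, i.e. when $P_1'\to P_0'$ is injective so that $K'=0$ and $L$ has projective dimension $\le1$. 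Combining with the isomorphism $\E(f',f'')\cong\Coker f_N'$ from the first part gives $\E(f',f'')\supseteq\Ext_A^1(L,N)$ with equality when $f'$ is injective.

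The only genuine obstacle is bookkeeping: making sure that the two images $\Img f_0'$ and $\Img(-f_1'')$ inside $\Hom_A(P_1',P_0'')$ are accounted for correctly when transporting a class up to the top row, since one of them ($\Img f_0'$) maps into $\Img f_N'$ while the other ($\Img(-f_1'')$) maps to zero in $\Hom_A(P_1',N)$; conflating them would break either injectivity or surjectivity. Keeping those two roles separate — one "absorbed by the row map," one "absorbed by exactness of the column" — is the whole content, and once it is organized this way the proof is a two-line chase, which is why the statement is flagged as following from "an easy diagram chasing."
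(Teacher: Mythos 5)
The first half of your argument is correct and is exactly the ``easy diagram chasing'' the paper leaves unwritten: since $P_0'$ and $P_1'$ are projective, the two right-hand columns of \eqref{eq:double} are exact with surjective top maps, the composite $\Hom_A(P_1',P_0'')\twoheadrightarrow\Hom_A(P_1',N)\to\Coker f_N'$ kills $\Img(-f_1'')$ (because $\pi'' f''=0$) and sends $\Img f_0'$ into $\Img f_N'$, and the resulting map $\E(f',f'')\to\Coker f_N'$ is surjective by the column surjection and injective by lifting along $\Hom_A(P_0',P_0'')\twoheadrightarrow\Hom_A(P_0',N)$ and absorbing the difference into $\Img f_1''$. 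Your bookkeeping of the two distinct roles of $\Img f_0'$ and $\Img(-f_1'')$ is precisely the content.

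The second half reaches the right conclusion, but two of your intermediate identifications are false as stated. First, $\Ext_A^1(\Img f',N)\cong\Coker f_N'$ cannot hold: if $f'$ is injective then $\Img f'\cong P_1'$ is projective, so $\Ext_A^1(\Img f',N)=0$ while $\Coker f_N'\cong\Ext_A^1(L,N)$ is generally nonzero (and for $f'=0$ one gets $0$ versus $\Hom_A(P_1',N)$). Indeed, from $0\to\Img f'\to P_0'\to L\to 0$ one has $\Ext_A^1(\Img f',N)\cong\Ext_A^2(L,N)$, which is not what you want. The correct statement extracted from your splitting is $\Ext_A^1(L,N)\cong\Coker\bigl(\Hom_A(P_0',N)\to\Hom_A(\Img f',N)\bigr)$, and this embeds into $\Coker f_N'$ because $\Hom_A(\Img f',N)\to\Hom_A(P_1',N)$ is injective ($P_1'$ surjects onto $\Img f'$). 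Second, the obstruction to equality is not $\Ext_A^1(K',N)$: the quotient of $\Coker f_N'$ by the image of $\Ext_A^1(L,N)$ is $\Hom_A(P_1',N)/\Hom_A(\Img f',N)$, which embeds into $\Hom_A(K',N)$; the group $\Ext_A^1(K',N)$ never enters, and ``$\Ext_A^1(K',N)=0$'' is neither necessary nor the right sufficient condition. When $f'$ is injective one has $K'=0$, the quotient vanishes, and the lemma follows, so your final conclusion is fine, but the two steps above should be replaced by the correct identifications.
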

\noindent So by definition $\E(f',f'')$ is homotopy invariant, and by the above lemma it depends on $N=\Coker f''$ rather than $f''$ itself.


For projective representations $P_1,P_0,P_1^s,P_0^s,P_1^q,P_0^q$, we denote $\Grs_\oplus:=\Gr_{P_1^s}^{P_1^q}(P_1)\times \Gr_{P_0^s}^{P_0^q}(P_0)$. To make our notation more compact, we will always write $P_i''$ for $P_i/P_i'$.
We define
$$Z=\{(f,P'_1,P'_0)\in\Hom_A(P_1,P_0)\times \Grs_\oplus\mid f(P'_1)\subseteq P'_0\}.$$
Let $p_1:Z \to \Hom_A(P_1,P_0)$  be the first factor projection and $p_2:Z \to \Grs_\oplus$ be the second one:
$$
\xymatrix{
& Z\ar[ld]_{p_1}\ar[rd]^{p_2} &\\
\Hom_A(P_1,P_0)\quad& & \Grs_{\oplus}}
$$
Any point $x=(f,P_1',P_0')\in Z$ determines the following diagram up to automorphisms of $P_1',P_0',P_1'',P_0''$.
\begin{displaymath} \begin{matrix}
0 & \to & P_0' & \xrightarrow{\iota_0} & P_0 & \xrightarrow{\pi_0} & P_0'' & \to & 0\\
& & \quad\uparrow_{f'} & & \ \ \uparrow_{f} & & \quad\uparrow_{f''}\\
0 & \to & P_1' & \xrightarrow{\iota_1} & P_1 & \xrightarrow{\pi_1} & P_1'' & \to & 0\\
\end{matrix} \end{displaymath}

\begin{lemma} \label{bundle}
The space $Z$ is a vector bundle over $\Grs_\oplus$ with fibre $p_2^{\minus 1}(P'_1,P'_0)$ isomorphic to $\Hom_A(P_1',P_0')\oplus\Hom_A(P_1'',P_0)$.
Hence $Z$ is smooth and irreducible of dimension equal to
\begin{equation*}(\hom_A(P_1^s,P_1^q)+\hom_A(P_0^s,P_0^q))+(\hom_A(P_1,P_0)-\hom_A(P_1^s,P_0^q)).\end{equation*}
\end{lemma}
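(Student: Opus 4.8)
The plan is to analyze the fiber $p_2^{-1}(P_1',P_0')$ over a point $(P_1',P_0')\in\Grs_\oplus$ directly, show it is a linear subspace of $\Hom_A(P_1,P_0)$ of the predicted dimension, and then argue the total space is a vector bundle. First I would fix $(P_1',P_0')\in\Grs_\oplus$, which by Lemma \ref{L:QG} means fixing embeddings $\iota_1\colon P_1'\hookrightarrow P_1$ and $\iota_0\colon P_0'\hookrightarrow P_0$ with projective cokernels $P_1''$, $P_0''$. The fiber consists of all $f\in\Hom_A(P_1,P_0)$ with $f(P_1')\subseteq P_0'$, i.e.\ all $f$ in the kernel of the composite $\Hom_A(P_1,P_0)\to\Hom_A(P_1',P_0)\to\Hom_A(P_1',P_0'')$; since $P_0''$ is projective the sequence $0\to P_0'\to P_0\to P_0''\to 0$ splits, so $\Hom_A(P_1',P_0)\to\Hom_A(P_1',P_0'')$ is surjective. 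Thus the fiber is the kernel of a surjective composite $\Hom_A(P_1,P_0)\twoheadrightarrow\Hom_A(P_1',P_0'')$; the first arrow $\Hom_A(\iota_1,P_0)\colon\Hom_A(P_1,P_0)\to\Hom_A(P_1',P_0)$ is also surjective because $P_1''$ is projective, so the composite is surjective and the fiber has dimension $\hom_A(P_1,P_0)-\hom_A(P_1',P_0'')=\hom_A(P_1,P_0)-\hom_A(P_1^s,P_0^q)$.

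Next I would identify the fiber with $\Hom_A(P_1',P_0')\oplus\Hom_A(P_1'',P_0)$ explicitly, to get the clean bundle statement. Choosing a splitting $P_1=P_1'\oplus P_1''$ (using projectivity of $P_1''$), a morphism $f$ with $f(P_1')\subseteq P_0'$ is determined by its restriction $f|_{P_1'}\in\Hom_A(P_1',P_0')$ together with $f|_{P_1''}\in\Hom_A(P_1'',P_0)$ — the latter unconstrained since $P_0$ is the full target. This gives the claimed fiber decomposition, and its dimension is $\hom_A(P_1^s,P_0^s)+\hom_A(P_1'',P_0)$; a short computation using $\hom_A(P_1'',P_0)=\hom_A(P_1,P_0)-\hom_A(P_1^s,P_0)$ and additivity over the split sequence $0\to P_0^s\to P_0\to P_0^q\to 0$ reconciles this with the expression $\hom_A(P_1,P_0)-\hom_A(P_1^s,P_0^q)$ from the previous paragraph. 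Adding $\dim\Grs_\oplus=\hom_A(P_1^s,P_1^q)+\hom_A(P_0^s,P_0^q)$ from Lemma \ref{L:QG} yields the stated total dimension.

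Finally, to upgrade the constant-rank fiber statement to an honest vector bundle, I would note that over the smooth irreducible base $\Grs_\oplus$ (irreducibility and smoothness by Lemma \ref{L:QG}) the projection $p_2$ realizes $Z$ as the kernel of a morphism of trivial vector bundles: $Z=\ker\bigl(\underline{\Hom_A(P_1,P_0)}\to \mathcal{F}\bigr)$ where the fiber of $\mathcal{F}$ at $(P_1',P_0')$ is $\Hom_A(P_1',P_0'')$, and the map has locally constant (indeed constant) rank, so the kernel is a subbundle. A tautological-family/local-trivialization argument — work over an open subset of $\Grs_\oplus$ where the universal subobjects admit simultaneous splittings — makes this precise; then smoothness and irreducibility of $Z$ follow since it fibers in affine spaces over a smooth irreducible base.

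The main obstacle I anticipate is the bookkeeping in the bundle-theoretic step: making sure the splittings of the universal exact sequences can be chosen locally and compatibly so that the "constant corank" data of Lemma \ref{L:QG} genuinely assembles into a vector subbundle rather than just a fiberwise-linear family, and keeping the two equivalent dimension formulas straight. The ring-theoretic inputs (surjectivity of the Hom maps from projectivity of the quotients) are routine; the geometry of the quiver Grassmannian factors is exactly what Lemma \ref{L:QG} was set up to provide.
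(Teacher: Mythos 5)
Your proposal is correct and follows essentially the same route as the paper: realizing $Z$ as the kernel of the bundle map $f\mapsto \pi_0 f\iota_1$ inside the trivial bundle $\Hom_A(P_1,P_0)$ over $\Grs_\oplus$, identifying the fibre via the splittings coming from projectivity of $P_1''$ and $P_0''$, and adding $\dim\Grs_\oplus=\hom_A(P_1^s,P_1^q)+\hom_A(P_0^s,P_0^q)$ from Lemma \ref{L:QG}. The extra care you take with surjectivity of the composite and with local trivializations only fills in details the paper leaves implicit.
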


\begin{proof}
$Z$ can be realized as the subbundle of the trivial vector bundle
$\Hom_A(P_1,P_0)$ on $\Grs_{\oplus}$
$$Z=\{(P_1',P_0',f)\in \Grs_{\oplus} \times \Hom_A(P_1,P_0)\mid \pi_0f\iota_1=0\}.$$
Then the fibre $p_2^{\minus 1}(P'_1,P'_0)$ is given by the kernel of $$\Hom_A(P_1,P_0)\to \Hom_A(P_1,P_0'')\to \Hom_A(P_1',P_0''),$$ which is isomorphic to $\Hom_A(P_1',P_0')\oplus\Hom_A(P_1'',P_0)$.
Then the dimension of  $Z$ is equal to $\dim \Grs_\oplus+\dim p_2^{\minus 1}(P'_1,P'_0)$.
We recall from Lemma \ref{L:QG} that $\dim \Gr_{P^s}^{P^q}(P) = \hom_A(P^s,P^q)$, so this is what we desired.
\end{proof}

\begin{lemma} \label{tangent}
The tangent space $\T_xp_1^{\minus 1}(f)\text{ is isomorphic to }\Ker(f_0',\minus f_1'')$.
\end{lemma}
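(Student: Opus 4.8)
The plan is to compute $\T_x p_1^{-1}(f)$ directly via the dual numbers $k[\epsilon]$, exactly as was done for $Z(Y,X)$ in the proof of the earlier lemma. A point of $p_1^{-1}(f)$ is a pair $(P_1', P_0') \in \Grs_\oplus$ with $f(P_1') \subseteq P_0'$; since $f$ is fixed, a tangent vector at $x = (f, P_1', P_0')$ is a pair of tangent vectors to the two quiver Grassmannians $\Gr_{P_1^s}^{P_1^q}(P_1)$ and $\Gr_{P_0^s}^{P_0^q}(P_0)$ at $P_1'$ and $P_0'$, subject to the linearized version of the incidence condition $\pi_0 f \iota_1 = 0$. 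By Lemma \ref{L:QG} (invoking \cite[Corollary II.5]{F}), the tangent space to $\Gr_{P_i^s}^{P_i^q}(P_i)$ at $P_i'$ is $\Hom_A(P_i', P_i'')$ — concretely, an infinitesimal deformation of the subobject $P_i' \hookrightarrow P_i$ is recorded by a map $\phi_i \colon P_i' \to P_i''$, with the deformed inclusion over $k[\epsilon]$ being (a lift of) $\iota_i + \epsilon\,(\text{lift of }\phi_i)$.

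First I would write the deformed inclusions: pick set-theoretic (i.e. $R$-linear, or simply $k$-linear) lifts so that over $k[\epsilon]$ the subobjects become $\iota_1 + \epsilon \widetilde{\phi_1}$ and $\iota_0 + \epsilon \widetilde{\phi_0}$. The condition that $f$ still maps the deformed $P_1'$ into the deformed $P_0'$ is the vanishing mod $\epsilon^2$ of $\pi_0 \, f \,(\iota_1 + \epsilon\widetilde{\phi_1})$ read in $P_0''$ after quotienting by the deformed $P_0'$; equivalently, $\pi_0 f \widetilde{\phi_1}$ must agree, as a map $P_1' \to P_0''$, with the contribution coming from $\phi_0$, namely $f'' \phi_1 = (\text{image of } \phi_0 \text{ under } \Hom_A(P_1', f''))$... more precisely the linearized incidence reads $f_1''(\phi_1) = f_0'(\phi_0)$ in $\Hom_A(P_1', P_0'')$, where $f_1'' = \Hom_A(P_1', f'')$ and $f_0' = \Hom_A(f', P_0'')$ as in the double complex \eqref{eq:double}. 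Rearranging, $(\phi_0, \phi_1)$ lies in the tangent space precisely when $(f_0', -f_1'')(\phi_0, \phi_1) = 0$, i.e. $(\phi_0, \phi_1) \in \Ker(f_0', -f_1'') = \Ho(f', f'')$. This identifies $\T_x p_1^{-1}(f)$ with $\Ker(f_0', -f_1'')$ as claimed.

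The main obstacle I expect is bookkeeping rather than anything deep: one must be careful about which lifts are chosen and check that the linearized incidence condition is independent of those choices (changing a lift of $\phi_i$ by an honest map into $P_i'$ alters things by a term that lies in the image of the obvious map and does not affect membership in the kernel), and one must correctly match the maps $\Hom_A(f', P_0'')$ and $\Hom_A(P_1', f'')$ appearing here with the differentials $f_0'$ and $-f_1''$ of the double complex \eqref{eq:double} — in particular getting the sign right so that the answer is the kernel of $(f_0', -f_1'')$ and not of $(f_0', f_1'')$. Once the identification of tangent spaces to the quiver Grassmannians from Lemma \ref{L:QG} is taken as given, the remaining computation is a short diagram chase in the double complex, entirely parallel to the dual-number computation carried out for \eqref{eq1}.
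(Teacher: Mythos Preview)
Your proposal is correct and follows essentially the same approach as the paper: both arguments compute the tangent space via dual numbers, lift the Grassmannian tangent vectors $\varphi_i\in\Hom_A(P_i',P_i'')$ to maps into $P_i$, and reduce the incidence condition $f(P_1')\subseteq P_0'$ over $k[\epsilon]$ to the equation $\varphi_0 f' - f''\varphi_1 = 0$, i.e.\ $(\varphi_0,\varphi_1)\in\Ker(f_0',-f_1'')$. The only cosmetic difference is that the paper first computes $\T_xZ$ for the full incidence variety (carrying along the $F\in\Hom_A(P_1,P_0)$ component, obtaining $\pi_0F\iota_1=\varphi_0f'-f''\varphi_1$) and then specializes to the fibre by setting $F=0$, whereas you fix $f$ from the outset; the underlying computation is identical.
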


\begin{proof}
For any $(F,\varphi_1,\varphi_0)\in\T_x(\Hom_A(P_1,P_0)\times \Grs_\oplus)$\\
\rightline{$\cong\Hom_A(P_1,P_0)\oplus\Hom_A(P_1',P_1'')\oplus\Hom_A(P_0',P_0''),\qquad $}\\
let $\tilde{\varphi}_i\in\Hom_A(P'_i,P_i)$ be some lifting of $\varphi_i$, and $\tilde{\varphi}_i$ can be further lifted to some $\bar{\varphi}_i\in\End_R(P_i)$. Using the algebra of dual numbers $k[\epsilon]$, we compute the tangent space $\T_xZ$ as follows. $(F,\varphi_1,\varphi_0)\in\T_xZ$
\begin{align*}
&\iff (f+\epsilon F)(\Id+\epsilon\bar{\varphi}_1)P_1'\subseteq(\Id+\epsilon\bar{\varphi}_0)P_0'\\
&\iff(\Id-\epsilon\bar{\varphi}_0)(f+\epsilon F)(\Id+\epsilon\bar{\varphi}_1)P'_1\subseteq P'_0\\
&\iff(f\bar{\varphi}_1-\bar{\varphi}_0 f+F)P'_1\subseteq P'_0\\
&\iff(f\tilde{\varphi}_1-\tilde{\varphi}_0 f'+F)P'_1\subseteq P'_0\\
&\iff\pi_0(f\tilde{\varphi}_1-\tilde{\varphi}_0 f'+F)P'_1=0\\
&\iff(f''\varphi_1-\varphi_0 f'+\pi_0F)P'_1=0\\
&\iff\pi_0F\iota_1=\varphi_0 f'-f''\varphi_1
\end{align*}
Hence $\T_xp_1^{\minus 1}(f)\cong\Ker(f_0',-f_1'')$.
\end{proof}

\begin{corollary}
The function $Z\to\mathbb{Z^+}$ given by $(f,P_1',P_0')\mapsto\dim\E(f',f'')$ is upper semi-continuous.
\end{corollary}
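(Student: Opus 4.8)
The plan is to exhibit $\dim\E(f',f'')$ as a fixed constant minus the rank of a morphism of vector bundles over $Z$, whereupon the assertion follows from the lower semi-continuity of rank.

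First I would assemble the universal data over $Z$. Pulling back along $p_2$ the tautological subrepresentations of $\Gr_{P_1^s}^{P_1^q}(P_1)$ and $\Gr_{P_0^s}^{P_0^q}(P_0)$ produces flat families $\mc{P}_i'\subseteq P_i\times Z$ of $A$-modules, with quotients $\mc{P}_i''=(P_i\times Z)/\mc{P}_i'$, whose fibres over a point $x=(f,P_1',P_0')$ are $P_i'$ and $P_i''$; these are honest sub- and quotient vector bundles because we work over the open loci sitting inside the smooth quiver Grassmannians $\Gr_\gamma(P_i)$ of Lemma \ref{L:QG}. Restricting the tautological map $f$ along $\mc{P}_1'\subseteq P_1\times Z$ and passing to quotients gives universal morphisms $\mc{P}_1'\to\mc{P}_0'$ and $\mc{P}_1''\to\mc{P}_0''$ specializing to $f'$ and $f''$ at each $x$.

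Next I would note that for the projective representations involved the dimensions $\hom_A(P_i',P_j'')=\hom_A(P_i^s,P_j^q)$, $\hom_A(P_0',P_0'')=\hom_A(P_0^s,P_0^q)$ and $\hom_A(P_1',P_1'')=\hom_A(P_1^s,P_1^q)$ are constant on $Z$, since $P_i'\cong P_i^s$ and $P_j''\cong P_j^q$. Each sheaf $\mc{H}om_A(\mc{P}_i',\mc{P}_j'')$ is the kernel of an $\mc{O}_Z$-linear map between the locally free $R$-$\mc{H}om$ sheaves (the one cutting out $A$-linearity from $R$-linearity), so constancy of its fibre dimension forces it to be a vector bundle — exactly the computation behind the smoothness of $\Gr_\gamma(P)$ in Lemma \ref{L:QG}. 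Applying $\Hom_A$ to the universal morphisms then glues the maps $f_0'=\Hom_A(f',P_0'')$ and $f_1''=\Hom_A(P_1',f'')$ of diagram \eqref{eq:double} into a morphism of vector bundles over $Z$
\begin{equation*}d\colon\mc{H}om_A(\mc{P}_0',\mc{P}_0'')\oplus\mc{H}om_A(\mc{P}_1',\mc{P}_1'')\longrightarrow\mc{H}om_A(\mc{P}_1',\mc{P}_0''),\qquad d|_x=(f_0',\minus f_1'').\end{equation*}
Since $\E(f',f'')=\Coker(d|_x)$ by definition, $\dim\E(f',f'')=\hom_A(P_1^s,P_0^q)-\rank(d|_x)$; the rank of a vector-bundle morphism is lower semi-continuous (the locus where a fixed-size minor of a local matrix of $d$ is nonzero is open) and $\hom_A(P_1^s,P_0^q)$ is constant, so $\dim\E(f',f'')$ is upper semi-continuous.

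The only step needing care is the local-freeness claim for the various $A$-module $\mc{H}om$-sheaves, and it is handled precisely as in Lemma \ref{L:QG}. If one prefers to avoid bundle language altogether, the corollary also drops out of the results already proved: $Z$ is smooth by Lemma \ref{bundle}, so $x\mapsto\dim\T_xp_1^{\minus1}(f)=\dim Z-\rank(dp_1)_x$ is upper semi-continuous on $Z$; by Lemma \ref{tangent} this equals $\dim\Ho(f',f'')=\dim\Ker(f_0',\minus f_1'')$; and from diagram \eqref{eq:double} one has $\dim\E(f',f'')-\dim\Ho(f',f'')=\hom_A(P_1^s,P_0^q)-\hom_A(P_0^s,P_0^q)-\hom_A(P_1^s,P_1^q)$, which is constant on $Z$, so $\dim\E(f',f'')$ is upper semi-continuous as claimed.
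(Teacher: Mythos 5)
Your proposal is correct, and it follows essentially the route the paper intends: the corollary is stated as an immediate consequence of Lemma \ref{tangent} (together with the smoothness of $Z$ from Lemma \ref{bundle} and the fact that the source and target of $(f_0',\minus f_1'')$ have constant dimensions since $P_i'\cong P_i^s$, $P_i''\cong P_i^q$ on $Z$), which is exactly your closing argument, while your bundle-theoretic version is the same corank observation the paper invokes in the very next sentence for $\e(-,-)$ on $\Hom_A(P_1',P_0')\times\Hom_A(P_1'',P_0'')$. No gaps.
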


In the mean while, the function $\e(-,-):=\dim\E(-,-)$ on $\Hom_A(P_1',P_0')\times\Hom_A(P_1'',P_0'')$ is also upper semi-continuous because it is the corank function of a morphism of vector bundles on $\Hom_A(P_1',P_0')\times\Hom_A(P_1'',P_0'')$. Since for any $(f',f'')$ there is a point $x\in Z$ corresponding to it, the minimal values of these two functions must coincide.

It is proved in \cite[Theorem V.2.2]{IOTW} and also follows from Corollary \ref{CDC} later that for any $\beta_1,\beta_0\in\mathbb{N}^{Q_0}$ a general presentation in $\PHom_A(\beta_1,\beta_0)$ is homotopy equivalent to a general presentation in the reduced space $\PHom_A(\beta_0-\beta_1)$ (Definition \ref{D:delta}), so the following definition makes sense. Let $\e(\beta_0'-\beta_1',\beta_0''-\beta_1'')$ be the minimal value of $\e(-,-)$ on $\PHom_A(\beta_1',\beta_0')\times\PHom_A(\beta_1'',\beta_0'')$.

\begin{proposition}\label{SPP} For any $\beta_i',\beta_i''\in\mathbb{N}_0^{Q_0}$ $(i=0,1)$, let $\beta_i=\beta_i'+\beta_i''$. The set
\begin{equation*}\{f\in\PHom_A(\beta_1,\beta_0)\mid f\text{ has a genuine subpresentation in }\PHom_A(\beta_1',\beta_0')\}\end{equation*}
has codimension $\e(\beta_0'-\beta_1',\beta_0''-\beta_1'')$ in $\PHom_A(\beta_1,\beta_0)$.
\end{proposition}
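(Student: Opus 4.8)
The plan is to exhibit the locus in the statement as the image of the projection $p_1\colon Z\to\Hom_A(P_1,P_0)$ of Lemma~\ref{bundle}, for the projectives $P_i=P(\beta_i)$, $P_i^s=P(\beta_i')$, $P_i^q=P(\beta_i'')$ ($i=0,1$), and then to compute its codimension by a fibre–dimension count. Since $\beta_i=\beta_i'+\beta_i''$ we have $P_i\cong P_i^s\oplus P_i^q$, so $\Grs_\oplus\neq\emptyset$ and, by Lemma~\ref{bundle}, $Z$ is a nonempty irreducible smooth variety of the dimension recorded there. First I would check that $\Img p_1$ is exactly the set in question. Given $(f,P_1',P_0')\in Z$, restricting $f$ to $P_1'\to P_0'$ produces a subpresentation of $f$ whose quotients $P_i/P_i'\cong P_i^q$ are projective, hence a genuine subpresentation in $\PHom_A(\beta_1',\beta_0')$. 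Conversely, a genuine subpresentation $f'\colon P_1'\to P_0'$ of $f$ with $P_i'\cong P(\beta_i')$ has, by projectivity of the quotients, split short exact sequences $0\to P_i'\to P_i\to P_i/P_i'\to 0$; hence $P_i/P_i'$ is projective with Betti vector $\beta_i-\beta_i'=\beta_i''$, i.e. $\cong P_i^q$, so $(f,P_1',P_0')\in Z$. Thus $\Img p_1$ is constructible and irreducible (image of $Z$), and its codimension in $\Hom_A(P_1,P_0)$ equals that of $W:=\overline{\Img p_1}$.

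To compute $\dim W$ I would combine the fibre–dimension theorem with generic smoothness. Since $\operatorname{char}k=0$ there is a dense open $W_0\subseteq W$ over which $p_1$ is smooth, whence $\dim W=\dim Z-d$ with $d=\dim\T_x p_1^{-1}(p_1(x))$ for any $x\in p_1^{-1}(W_0)$, and by Lemma~\ref{tangent} this tangent space is $\Ho(f',f'')$. To pin down $d$ I would use the auxiliary map $\varphi\colon Z\to\PHom_A(\beta_1',\beta_0')\times\PHom_A(\beta_1'',\beta_0'')$, $(f,P_1',P_0')\mapsto(f',f'')$, which is surjective (the split presentation $f'\oplus f''$ is a preimage of $(f',f'')$), hence dominant. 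Since $\dim\Ho$ and $\e=\dim\E$ are upper semi-continuous and, as shown below, differ by a constant, they attain their minima on a common dense open $U_0$ of the product space; then $\varphi^{-1}(U_0)\cap p_1^{-1}(W_0)$ is a nonempty open subset of the irreducible $Z$, and choosing $x$ there gives $d=\dim\Ho(f',f'')$ equal to the minimal value of $\dim\Ho$.

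Finally I would unwind the numerology. Applying $\hom_A$ to the exact sequence $0\to\Ho(f',f'')\to\Hom_A(P_0^s,P_0^q)\oplus\Hom_A(P_1^s,P_1^q)\xrightarrow{(f_0',-f_1'')}\Hom_A(P_1^s,P_0^q)\to\E(f',f'')\to 0$ gives $\dim\Ho(f',f'')-\e(f',f'')=\hom_A(P_0^s,P_0^q)+\hom_A(P_1^s,P_1^q)-\hom_A(P_1^s,P_0^q)$, independent of $(f',f'')$; this justifies the common-minimum claim and shows $d$ minus $\e_{\min}$ equals this constant, where $\e_{\min}=\e(\beta_0'-\beta_1',\beta_0''-\beta_1'')$ by definition. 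Substituting the formula $\dim Z=\hom_A(P_1^s,P_1^q)+\hom_A(P_0^s,P_0^q)+\hom_A(P_1,P_0)-\hom_A(P_1^s,P_0^q)$ from Lemma~\ref{bundle} into $\operatorname{codim}W=\hom_A(P_1,P_0)-\dim Z+d$, the term $\hom_A(P_1,P_0)$ and the three terms $\hom_A(P_i^s,P_j^q)$ cancel against the corresponding terms of $d$, leaving $\operatorname{codim}W=\e_{\min}=\e(\beta_0'-\beta_1',\beta_0''-\beta_1'')$, as claimed.

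I expect the main obstacle to be the passage from Lemma~\ref{tangent}, which computes only a tangent space, to the actual dimension of a general fibre of $p_1$: this forces the use of generic smoothness (hence $\operatorname{char}k=0$) and requires knowing that a general fibre of $p_1$ meets the dense locus of $Z$ where $\dim\Ho$ is minimal, which rests on the dominance of $\varphi$. The identification of $\Img p_1$ with the stated set and the Euler-characteristic bookkeeping are routine by comparison.
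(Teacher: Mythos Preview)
Your proposal is correct and follows essentially the same route as the paper: both use Lemma~\ref{bundle} for $\dim Z$, Lemma~\ref{tangent} for the generic fibre, generic smoothness to link them, and the Euler-characteristic relation between $\Ho$ and $\E$ to identify the minimum with $\e(\beta_0'-\beta_1',\beta_0''-\beta_1'')$. Your write-up is in fact more explicit than the paper's in identifying $\Img p_1$ with the locus in question; the only cosmetic point is that your map $\varphi$ is not globally well-defined as a morphism (it depends on trivializing isomorphisms $P_i'\cong P(\beta_i')$), but since you only use it to pull back the invariant upper semi-continuous functions $\dim\Ho$ and $\e$, this is harmless and amounts to the paper's remark that every pair $(f',f'')$ arises from some $x\in Z$.
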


\begin{proof} Since $Z$ is smooth, $\dim Z=\dim\Img p_1+\min_{x\in Z}\{\dim\T_x p_1^{\minus 1}(f)\}$ by a theorem on the generic smoothness \cite[Corollary III.10.7]{Ha}. Combining the dimension formulas in Lemma \ref{tangent} and Lemma \ref{bundle}, we get the codimension:
\begin{align*}
&\quad\ \hom_A(P_1,P_0)-\dim\Img p_1 \\
&=\hom_A(P_1^s,P_0^q)-(\hom_A(P_1^s,P_1^q)+\hom_A(P_0^s,P_0^q))+\min_{x\in Z}\{\dim\Ker(f_0',\minus f_1'')\}\\
&=\min_{x\in Z}\{\dim\Coker(f_0',\minus f_1'')\}.
\end{align*}
This minimal value is exactly $\e(\beta_0'-\beta_1',\beta_0''-\beta_1'')$ by the proceeding remarks.
\end{proof}

\begin{theorem}\label{SPT} The following statements are equivalent: \begin{enumerate}
\item{} A general presentation in $\PHom_A(\beta_1,\beta_0)$ has a genuine subpresentation in $\PHom_A(\beta_1',\beta_0')$.
\item{} $\e(\beta_0'-\beta_1',\beta_0''-\beta_1'')=0$.
\item{} There are $f'\in\Hom_A(P_1',P_0'),f''\in\Hom_A(P_1'',P_0'')$ with $N=\Coker f''$ such that
$\Hom_A(P_0',N)\xrightarrow{f_*'}\Hom_A(P_1',N)$ is surjective.
\end{enumerate}\end{theorem}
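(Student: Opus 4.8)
The plan is to deduce both equivalences from results already in hand: the equivalence (1)$\Leftrightarrow$(2) from Proposition \ref{SPP}, and (2)$\Leftrightarrow$(3) from the very definition of $\e(\beta_0'-\beta_1',\beta_0''-\beta_1'')$ together with Lemma \ref{DC}. So the proof is mostly a matter of unwinding definitions rather than new work.

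For (1)$\Leftrightarrow$(2), I would apply the construction preceding Lemma \ref{bundle} with $P_i=P(\beta_i)$, $P_i^s=P(\beta_i')$ and $P_i^q=P(\beta_i'')$ for $i=0,1$. Then the set
\[
S:=\{f\in\PHom_A(\beta_1,\beta_0)\mid f\text{ has a genuine subpresentation in }\PHom_A(\beta_1',\beta_0')\}
\]
is exactly the image $\Img p_1$ of the first projection $p_1\colon Z\to\PHom_A(\beta_1,\beta_0)$. By Chevalley's theorem $S$ is constructible, and since $\PHom_A(\beta_1,\beta_0)$ is an irreducible affine space, a general presentation lies in $S$ if and only if $S$ is dense, i.e. if and only if $S$ has codimension $0$; a constructible dense subset of an irreducible variety contains a dense open subset, which handles both directions. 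Proposition \ref{SPP} identifies the codimension of $S$ with $\e(\beta_0'-\beta_1',\beta_0''-\beta_1'')$, so (1) holds iff this number is $0$, which is (2).

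For (2)$\Leftrightarrow$(3), recall that by definition (and the remark just before Proposition \ref{SPP} matching the minimum over $Z$ with the minimum over the product) $\e(\beta_0'-\beta_1',\beta_0''-\beta_1'')$ is the minimum of the nonnegative, integer-valued, upper semi-continuous function $(f',f'')\mapsto\e(f',f'')=\dim\E(f',f'')$ on $\PHom_A(\beta_1',\beta_0')\times\PHom_A(\beta_1'',\beta_0'')$. This minimum equals $0$ precisely when some pair $(f',f'')$ has $\E(f',f'')=0$. By Lemma \ref{DC}, $\E(f',f'')\cong\Coker f_N'$, where $f_N'=\Hom_A(f',N)\colon\Hom_A(P_0',N)\to\Hom_A(P_1',N)$ and $N=\Coker f''$; this is precisely the map $f_*'$ appearing in (3). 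Hence $\E(f',f'')=0$ if and only if $f_*'$ is surjective, and (2)$\Leftrightarrow$(3) follows, both directions being immediate since $\e$ is nonnegative.

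I do not expect a serious obstacle: the theorem is essentially a repackaging of Proposition \ref{SPP} and Lemma \ref{DC}. The two points that need a little care are (i) observing that the set in statement (1) is literally $\Img p_1$, so that Chevalley's theorem and the codimension formula of Proposition \ref{SPP} apply verbatim, and (ii) unwinding the notation so that the map $f_*'$ of (3) is recognized as the map $f_N'=\Hom_A(f',N)$ from the double complex \eqref{eq:double}, whose cokernel is $\E(f',f'')$ by Lemma \ref{DC}.
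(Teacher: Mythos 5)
Your proposal is correct and follows exactly the paper's route: the paper proves the theorem in one line by citing Proposition \ref{SPP} for (1)$\Leftrightarrow$(2) and Lemma \ref{DC} for (2)$\Leftrightarrow$(3), and your write-up simply unpacks those two citations (identifying the set in (1) with $\Img p_1$ and the map in (3) with $f_N'$) in the intended way.
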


\begin{proof}
$(1) \Leftrightarrow (2)$ follows from Proposition \ref{SPP}; $(2) \Leftrightarrow (3)$ follows from Lemma~\ref{DC}.
\end{proof}

For the rest of this section, we suppose that $Q$ is a quiver without oriented cycle and $A$ is the path algebra $kQ$. Recall that any $M\in\Rep_\alpha(kQ)$ admits a canonical presentation \eqref{canproj}:
\begin{equation*}0\to P(\beta_1)=\bigoplus_{a\in Q_1}P_{ha}\otimes M(ta)\xrightarrow{f_M^{can}} P(\beta_0)=\bigoplus_{v\in Q_0}P_v\otimes M(v)\to M\to 0,\end{equation*} where $f_M^{can}$ is given by $e_{ha}\otimes m\mapsto e_{ha}\otimes am-a\otimes m$.
So $(\beta_1,\beta_0)=(D\alpha,\alpha)$, where $D$ is the matrix whose rows and columns are labeled by $Q_0$, with the diagonal entries all zero and the other entries $D_{u,v}$ equal to the number of arrows from $v$ to $u$. Moreover, the cokernel of a general presentation in $\PHom_{kQ}^{can}(\alpha):=\PHom_{kQ}(D\alpha,\alpha)$ corresponds to a general element in $\Rep_\alpha(kQ)$ by Theorem \ref{T:PP2Rep}. As in \cite{S}, we denote by $\ext(\alpha_1,\alpha_2)$ the minimal value of the upper semi-continuous function $\ext_{kQ}(-,-)$ on $\Rep_{\alpha_1}(kQ)\times\Rep_{\alpha_2}(kQ)$.

\begin{corollary}[Schofield] \label{SC} A general representation in $\Rep_{\alpha_1+\alpha_2}(kQ)$ has a subrepresentation in $\Rep_{\alpha_1}(kQ)$ if and only if $\ext(\alpha_1,\alpha_2)=0$.
\end{corollary}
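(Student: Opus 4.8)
The plan is to deduce Corollary \ref{SC} from Theorem \ref{SPT} by translating the statement about subrepresentations of a general representation into a statement about genuine subpresentations of a general presentation, using the canonical presentation functor. First I would record the basic dictionary: for $M \in \Rep_\alpha(kQ)$ the canonical presentation $f_M^{can} \in \PHom_{kQ}^{can}(\alpha) = \PHom_{kQ}(D\alpha, \alpha)$ has $\Coker f_M^{can} = M$, and by Theorem \ref{T:PP2Rep} (more precisely, by the openness of $p$ and Corollary \ref{PPRep}), a general presentation in $\PHom_{kQ}(D\alpha,\alpha)$ has cokernel a general representation in $\Rep_\alpha(kQ)$. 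Also note that since $kQ$ has global dimension $\leq 1$, every projective presentation is injective, so by Lemma \ref{DC} we have $\E(f',f'') = \Ext^1_{kQ}(L,N)$ whenever $L = \Coker f'$, $N = \Coker f''$; in particular $\e(D\alpha_1 - \alpha_1, D\alpha_2 - \alpha_2) = \ext(\alpha_1, \alpha_2)$ after matching up the reduced $\delta$-vectors (here $\beta_0' - \beta_1' = \alpha_1 - D\alpha_1$ etc., and $\e$ depends only on these by the remarks preceding Proposition \ref{SPP}).

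The key step is the equivalence, for a general representation $W$ of dimension $\alpha_1 + \alpha_2$ with canonical presentation $f$:
\[
W \text{ has a subrepresentation of dimension } \alpha_1 \iff f \text{ has a genuine subpresentation in } \PHom_{kQ}(D\alpha_1, \alpha_1).
\]
For the direction $(\Leftarrow)$: a genuine subpresentation $f'$ fits into a commutative ladder with projective cokernels in each column, so taking cokernels vertically gives a short exact sequence $0 \to L \to W \to N \to 0$ with $L$ presented by $f' \in \PHom_{kQ}(D\alpha_1,\alpha_1)$; since $P(D\alpha_1) \to P(\alpha_1) \to L \to 0$ shows $\dv L \leq \alpha_1$ with equality on the class in $K_0$, we get $\dv L = \alpha_1$, so $L \hookrightarrow W$ is the desired subrepresentation. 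For $(\Rightarrow)$: given $0 \to L \to W \to N \to 0$ with $\dv L = \alpha_1$, apply the canonical presentation functor and use that it is exact enough — or rather, use the Horseshoe-type lemma — to produce genuine subpresentation $f_L^{can} \hookrightarrow f_W^{can}$ with quotient $f_N^{can}$; one must check the quotient complex is again a (projective) presentation, which holds because $D\alpha_1 + D\alpha_2 = D(\alpha_1+\alpha_2)$ and $\alpha_1 + \alpha_2$ add up correctly, so the columns $P(D\alpha_i) = P_1''$-type and $P(\alpha_i)$-type are indeed direct summands. After invoking Corollary \ref{PPRep} to pass between a general representation and a general presentation, statements (1) of Theorem \ref{SPT} with $(\beta_1,\beta_0) = (D(\alpha_1+\alpha_2), \alpha_1+\alpha_2)$ and $(\beta_1',\beta_0') = (D\alpha_1,\alpha_1)$ becomes exactly ``a general representation of dimension $\alpha_1+\alpha_2$ has a subrepresentation of dimension $\alpha_1$'', while statement (2) becomes $\e(\alpha_1 - D\alpha_1, \alpha_2 - D\alpha_2) = 0$, i.e. $\ext(\alpha_1,\alpha_2) = 0$ by the identification above.

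The main obstacle I expect is the $(\Rightarrow)$ direction of the equivalence above: one needs to know that the canonical presentation functor $M \mapsto f_M^{can}$ sends a short exact sequence $0 \to L \to W \to N \to 0$ to a short exact sequence of complexes $0 \to f_L^{can} \to f_W^{can} \to f_N^{can} \to 0$ that is \emph{termwise split}, so that $f_L^{can}$ is a genuine (not just ordinary) subpresentation of $f_W^{can}$. This is really the statement that the canonical resolution \eqref{canproj} is functorial and termwise built from the underlying $R$-module structure, which splits any short exact sequence of $R$-modules; so $P(\beta_0) = \bigoplus_v P_v \otimes W(v)$ decomposes compatibly with $W(v) = L(v) \oplus N(v)$ as $R$-modules, and similarly for $P(\beta_1)$ using $W(ta) = L(ta) \oplus N(ta)$. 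Granting that, the genuine subpresentation condition (projective quotient in both spots) is automatic. A secondary, more bookkeeping-type obstacle is making sure the reduced-space bookkeeping and the homotopy-invariance of $\e$ (needed because $\PHom_{kQ}(D\alpha,\alpha)$ is not the reduced space $\PHom_{kQ}(\alpha - D\alpha)$, though they are homotopy equivalent generically by the remark before Proposition \ref{SPP}) are correctly applied; this is routine once the dictionary is set up, but it is where an off-by-a-summand error could creep in.
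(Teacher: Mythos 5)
Your overall route is the paper's own: translate through canonical presentations, identify $\e(\alpha_1-D\alpha_1,\alpha_2-D\alpha_2)$ with $\ext(\alpha_1,\alpha_2)$ via Lemma \ref{DC} and Corollary \ref{PPRep}, and apply Theorem \ref{SPT}; your treatment of ``subrepresentation $\Rightarrow$ genuine subpresentation'' by the termwise ($R$-split) functoriality of the canonical resolution \eqref{canproj} is exactly the paper's splicing of the canonical presentations of $L$ and $W/L$. The gap is in the converse step, and it is propped up by a false statement. Global dimension $\leq 1$ does \emph{not} make every projective presentation injective (e.g.\ $P_v\to 0$, or any map with a repeated indecomposable summand in the kernel); what is true, and what the identification of $\e$ with $\ext$ actually uses, is that a \emph{general} presentation in $\PHom_{kQ}(D\alpha,\alpha)$ is injective, because injectivity is an open condition and holds for the canonical resolution of any module.

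This matters in your key step $(\Leftarrow)$, where you assert that taking vertical cokernels of an arbitrary genuine subpresentation of $f=f_W^{can}$ gives a short exact sequence $0\to L\to W\to N\to 0$ with $\dv L=\alpha_1$. Neither claim is automatic. The snake lemma only gives exactness of $\Ker f''\to L\to W\to N\to 0$, so if the quotient presentation $f''$ fails to be injective, the image of $L$ in $W$ has dimension strictly smaller than $\dv L$; and from $P(D\alpha_1)\to P(\alpha_1)\to L\to 0$ one gets $\dv L=\alpha_1+\dv \Ker f'\geq \alpha_1$ (your inequality points the wrong way), with equality precisely when $f'$ is injective. Concretely, for the Kronecker quiver and $W$ general of dimension $(1,1)$, one can choose inside $f_W^{can}$ a genuine subpresentation $0\to P_0'$ with $P_0'\cong P_2$ a line contained in the image of $f$; its cokernel $S_2$ maps to zero in $W$, so no $(0,1)$-dimensional subrepresentation is produced this way even though $f$ and $f'$ are injective. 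The repair is exactly the move the paper makes: the incidence variety $Z$ is irreducible and injectivity of $f'$ and of $f''$ is an open, nonempty condition on it, so the genuine subpresentation furnished by Theorem \ref{SPT} for a general $f$ can be chosen with $f'$ (general in $\PHom_{kQ}(D\alpha_1,\alpha_1)$, hence injective) and $f''$ injective; then the snake lemma does give $0\to L\to W\to N\to 0$ with $\dv L=\alpha_1$, and $W=\Coker f$ is general by Corollary \ref{PPRep}. With that genericity inserted, and with the blanket injectivity claim replaced by generic injectivity in the $\e=\ext$ identification, your argument coincides with the paper's proof.
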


\begin{proof}
$\Rightarrow$. If $M\in\Rep_{\alpha_1+\alpha_2}(kQ)$ is general and let $L\in\Rep_{\alpha_1}(kQ)$ be its subrepresentation. We can splice the canonical presentations of $L$ and $M/L$ together to get a presentation of $M$ in $\PHom_{kQ}^{can}(\alpha_1+\alpha_2)$. We can conclude that a general presentation in $\PHom_{kQ}^{can}(\alpha_1+\alpha_2)$ has a genuine subpresentation in $\PHom_{kQ}^{can}(\alpha_1)$. So by the above corollary and Lemma \ref{DC}, $\ext(\alpha_1,\alpha_2)=0$.

$\Leftarrow$. Suppose that $\ext(\alpha_1,\alpha_2)=0$, then $\e(\alpha_1-D\alpha_1,\alpha_2-D\alpha_2)=0$ by Lemma \ref{DC}, so a general presentation $f$ in $\PHom_{kQ}^{can}(\alpha_1+\alpha_2)$ has a genuine subpresentation $f'$ in $\PHom_{kQ}^{can}(\alpha_1)$.
We can choose $f'$ to be general in $\PHom_{kQ}^{can}(\alpha_1)$, in particular injective. By Corollary \ref{PPRep},$M:=\Coker(f)$ is general in $\Rep_{\alpha_1+\alpha_2}(kQ)$. It follows by the snake lemma that $M$ has a subrepresentation $L=\Coker f'$ in $\Rep_{\alpha_1}(kQ)$.
\end{proof}

\section{Canonical Decomposition of Presentations} \label{S:CD}

Let $\Gr_{\oplus^2}(P)=\{(P',P'')\in\Gr_{P^s}^{P^q}(P)\times \Gr_{P^q}^{P^s}(P)\mid P'\cap P''=0\}$.
This is open in $\Gr_{P^s}^{P^q}(P)\times \Gr_{P^q}^{P^s}(P)$ and hence smooth irreducible.
Let $\Grs_{\oplus^2}=\Gr_{\oplus^2}(P_1)\times\Gr_{\oplus^2}(P_0)$ and we define
\begin{equation*}Z_{\oplus^2}=\{(f,P_1',P_1'',P_0',P_0'')\in\Hom_A(P_1,P_0)\times \Grs_{\oplus^2}\mid f(P_1')\subseteq P_0',f(P_1'')\subseteq P_0''\}.\end{equation*} Let $p_1:Z_{\oplus^2}\to \Hom_A(P_1,P_0)$  be the first factor projection and $p_2:Z_{\oplus^2}\to \Grs_{\oplus^2}$  be the second one. Any point $x=(f,P_1',P_1'',P_0',P_0'')\in Z_{\oplus^2}$ determines up to automorphisms of $P_1',P_0',P_1'',P_0''$,
the following diagram with split rows.
\begin{displaymath} \begin{matrix}
0 & \to & P_0' & \underset{\pi_0'}{\overset{\iota_0'}{\rightleftarrows}} & P_0 & \underset{\iota_0''}{\overset{\pi_0''}{\rightleftarrows}} & P_0'' & \to & 0\\
& & \quad\uparrow_{f'} & & \quad\uparrow_{f} & & \quad\uparrow_{f''}\\
0 & \to & P_1' & \underset{\pi_1'}{\overset{\iota_1'}{\rightleftarrows}} & P_1 & \underset{\iota_1''}{\overset{\pi_1''}{\rightleftarrows}} & P_1'' & \to & 0\\
\end{matrix} \end{displaymath}

\begin{lemma}\
{\em (i)}  $Z_{\oplus^2}$ is a vector bundle over $\Grs_{\oplus^2}$ with fibre $p_2^{\minus 1}(P_1',P_1'',P_0',P_0'')$ isomorphic to $\Hom_A(P_1',P_0')\oplus\Hom_A(P_1'',P_0'')$.

{\em (ii)}  The tangent space $\T_xp_1^{\minus 1}(f)$ is isomorphic to $\Ho(f',f'')\oplus\Ho(f'',f')$.
\end{lemma}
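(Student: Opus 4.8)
The plan is to mirror, for the ``doubled'' variety $Z_{\oplus^2}$, the computations already carried out for $Z$ in Lemmas \ref{bundle} and \ref{tangent}. For part (i), I would realize $Z_{\oplus^2}$ as a sub-bundle of the trivial bundle $\Hom_A(P_1,P_0)$ over $\Grs_{\oplus^2}$: a point of the base is a quadruple $(P_1',P_1'',P_0',P_0'')$ with $P_1'\oplus P_1''=P_1$ and $P_0'\oplus P_0''=P_0$ (using the openness condition defining $\Gr_{\oplus^2}$), so the splittings $(\iota_i',\pi_i',\iota_i'',\pi_i'')$ exist, and the fibre over such a point is
$$\{f\in\Hom_A(P_1,P_0)\mid \pi_0''f\iota_1'=0\text{ and }\pi_0'f\iota_1''=0\}.$$
Writing $f$ in block form with respect to the two direct-sum decompositions, $f=\left(\begin{smallmatrix} f_{11}&f_{12}\\ f_{21}&f_{22}\end{smallmatrix}\right)$ with $f_{11}\in\Hom_A(P_1',P_0')$, etc., the two conditions say exactly $f_{21}=0$ and $f_{12}=0$, so the fibre is $\Hom_A(P_1',P_0')\oplus\Hom_A(P_1'',P_0'')$. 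Local triviality is immediate since the splitting maps can be chosen to vary algebraically in local trivializations of the quiver Grassmannians, so $Z_{\oplus^2}$ is a vector bundle of the stated rank; smoothness and irreducibility then follow because $\Grs_{\oplus^2}$ is smooth and irreducible (being open in a product of smooth irreducible quiver Grassmannians, by Lemma \ref{L:QG}).

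For part (ii), I would run the same dual-numbers computation as in the proof of Lemma \ref{tangent}, but now tracking two inclusion conditions at once. A tangent vector at $x$ is a triple $(F,\varphi,\psi)$ where $F\in\Hom_A(P_1,P_0)$, $\varphi=(\varphi_1,\varphi_0)\in\Hom_A(P_1',P_1'')\oplus\Hom_A(P_0',P_0'')$ records the deformation of the ``primed'' pair inside $Z_{\oplus^2}$, and $\psi=(\psi_1,\psi_0)\in\Hom_A(P_1'',P_1')\oplus\Hom_A(P_0'',P_0')$ records the deformation of the ``double-primed'' pair. Lifting $\varphi_i,\psi_i$ to endomorphisms of $P_i$ and conjugating $f+\epsilon F$ by $(\Id+\epsilon\bar\varphi_1+\epsilon\bar\psi_1)$ and $(\Id+\epsilon\bar\varphi_0+\epsilon\bar\psi_0)$, exactly as before, the condition $(F,\varphi,\psi)\in\T_xZ_{\oplus^2}$ becomes the pair of equations
\begin{align*}
\pi_0''F\iota_1'&=\varphi_0 f'-f''\varphi_1,\\
\pi_0'F\iota_1''&=\psi_0 f''-f'\psi_1.
\end{align*}
Passing to the fibre $p_1^{\minus 1}(f)$ means setting the deformation of $f$ itself to zero, i.e. imposing that $F$ is tangent to the fibre — equivalently that the block $\pi_0'F\iota_1'$ of $F$ is already accounted for — and reading off that the remaining freedom is precisely a choice of $(\varphi_1,\varphi_0)$ with $\varphi_0 f'=f''\varphi_1$ together with a choice of $(\psi_1,\psi_0)$ with $\psi_0 f''=f'\psi_1$. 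The first system says $(\varphi_1,\varphi_0)\in\Ker(f_0',\minus f_1'')=\Ho(f',f'')$ in the notation of the definition preceding Lemma \ref{DC}, and the second, by the symmetric version of that complex (swap the roles of $f'$ and $f''$), says $(\psi_1,\psi_0)\in\Ho(f'',f')$. Hence $\T_xp_1^{\minus 1}(f)\cong\Ho(f',f'')\oplus\Ho(f'',f')$.

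I expect the only genuinely delicate point to be bookkeeping in part (ii): making sure the two sets of deformation parameters $\varphi$ and $\psi$ are the correct complementary pieces of $\T_x\Grs_{\oplus^2}$ and that, after restricting to the fibre over $f$, the ``$F$-block'' contributes nothing further and does not secretly couple the two systems. This is resolved by choosing, once and for all, compatible splittings $P_i=P_i'\oplus P_i''$ so that every morphism is written in a fixed block form; then the two linearized inclusion conditions genuinely decouple into the two independent complexes, and the identification with $\Ho(f',f'')\oplus\Ho(f'',f')$ falls out. Everything else is a routine transcription of the already-established arguments for $Z$.
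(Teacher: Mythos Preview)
Your proposal is correct and follows essentially the same route as the paper: both realize $Z_{\oplus^2}$ as the sub-bundle of the trivial bundle cut out by $\pi_0''f\iota_1'=0$ and $\pi_0'f\iota_1''=0$, and both run the dual-numbers computation of Lemma~\ref{tangent} twice to obtain the pair of equations $\pi_0''F\iota_1'=\varphi_0 f'-f''\varphi_1$ and $\pi_0'F\iota_1''=\psi_0 f''-f'\psi_1$, then set $F=0$ to land in $\Ho(f',f'')\oplus\Ho(f'',f')$. Your phrasing around ``the block $\pi_0'F\iota_1'$ of $F$ is already accounted for'' is a little muddled---passing to the fibre of $p_1$ simply means $F=0$, which kills all four blocks at once and makes the decoupling of the two systems immediate---but the mathematics is right.
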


\begin{proof}
(i) Similar to Lemma \ref{bundle}, $Z_{\oplus^2}$ can be realized as the subbundle of the trivial vector bundle $\Hom_A(P_1,P_0)$ on $\Grs_{\oplus^2}$
$$Z_{\oplus^2}=\{(P_1',P_1'',P_0',P_0'',f)\in \Grs_{\oplus^2} \times \Hom_A(P_1,P_0)\mid \pi_0''f\iota_1'=0, \pi_0'f\iota_1''=0\}.$$
Then the fibre $p_2^{\minus 1}(P_1',P_1'',P_0',P_0'')$ is isomorphic to $\Hom_A(P_1',P_0')\oplus\Hom_A(P_1'',P_0'')$.

(ii) For any $(F,\varphi_1',\varphi_1'',\varphi_0',\varphi_0'')\in\T_x(\Hom_A(P_1,P_0)\times \Grs_{\oplus^2})$
\begin{equation*}\cong\Hom_A(P_1,P_0)\oplus\Hom_A(P_1',P_1'')\oplus\Hom_A(P_1',P_1'')\oplus\Hom_A(P_0',P_0'')\oplus\Hom_A(P_0'',P_0'),\end{equation*} as in Lemma \ref{tangent} one can show that $(F,\varphi_1',\varphi_1'',\varphi_0',\varphi_0'')\in\T_xZ_{\oplus^2}$ if and only if $\pi_0''F\iota_1'=\varphi_0' f'-f''\varphi_1'$ and $\pi_0'F\iota_1''=\varphi_0'' f''-f'\varphi_1''$. Hence $\T_xp_1^{\minus 1}(f)\cong\Ho(f',f'')\oplus\Ho(f'',f')$.
\end{proof}

\begin{corollary} \label{CDC} For any $\beta_i',\beta_i''\in\mathbb{N}^{Q_0}$, let $\beta_i=\beta_i'+\beta_i''$. The set
\begin{equation*}\{f\in\PHom_A(\beta_1,\beta_0)\mid f=f'\oplus f''\text{ with }f'\in\PHom_A(\beta_1',\beta_0'),f''\in\PHom_A(\beta_1'',\beta_0'')\}\end{equation*}
has codimension $\e(\beta_0'-\beta_1',\beta_0''-\beta_1'')+\e(\beta_0''-\beta_1'',\beta_0'-\beta_1')$ in $\PHom_A(\beta_1,\beta_0)$.
\end{corollary}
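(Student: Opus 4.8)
The plan is to run the same dimension count as in the proof of Proposition~\ref{SPP}, now with the triple $(Z_{\oplus^2},p_1,\Grs_{\oplus^2})$ in place of $(Z,p_1,\Grs_\oplus)$. First I would identify the set in the statement with the image $\Img p_1$: a presentation $f\in\PHom_A(\beta_1,\beta_0)$ is isomorphic to a direct sum $f'\oplus f''$ with $f'\in\PHom_A(\beta_1',\beta_0')$ and $f''\in\PHom_A(\beta_1'',\beta_0'')$ exactly when $P_1$ and $P_0$ admit complementary projective subobjects $P_1'\subseteq P_1$, $P_0'\subseteq P_0$ of the prescribed dimension vectors with $f(P_1')\subseteq P_0'$ and $f(P_1'')\subseteq P_0''$; since any such embedding of $P(\beta_i')$ into $P(\beta_i)$ with projective complement is, up to an automorphism of $P(\beta_i)$, the standard one, this is precisely the condition $f\in\Img p_1$. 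As $Z_{\oplus^2}$ is smooth and irreducible by part~(i) of the preceding Lemma, $\Img p_1$ is constructible and irreducible, so the codimension in question is $\hom_A(P_1,P_0)-\dim\Img p_1$.

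Next I would apply generic smoothness \cite[Corollary III.10.7]{Ha} to $p_1$ to get $\dim Z_{\oplus^2}=\dim\Img p_1+\min_{x\in Z_{\oplus^2}}\dim\T_x p_1^{\minus 1}(f)$. Part~(i) of the Lemma gives $\dim Z_{\oplus^2}=\dim\Grs_{\oplus^2}+\hom_A(P_1',P_0')+\hom_A(P_1'',P_0'')$, and Lemma~\ref{L:QG} (each quiver Grassmannian $\Gr_{P^s}^{P^q}(P)$ has dimension $\hom_A(P^s,P^q)$, and $\Gr_{\oplus^2}(P)$ is open in the product $\Gr_{P^s}^{P^q}(P)\times\Gr_{P^q}^{P^s}(P)$) yields $\dim\Grs_{\oplus^2}=\hom_A(P_1',P_1'')+\hom_A(P_1'',P_1')+\hom_A(P_0',P_0'')+\hom_A(P_0'',P_0')$. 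Part~(ii) identifies $\T_x p_1^{\minus 1}(f)$ with $\Ho(f',f'')\oplus\Ho(f'',f')$, and applying rank--nullity to the two maps $(f_0',\minus f_1'')$ and $(f_0'',\minus f_1')$ whose cokernels are $\E(f',f'')$ and $\E(f'',f')$ expresses each $\dim\Ho$ as the corresponding $\dim\E$ plus a sum of $\hom_A$-values that is independent of $x$.

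Assembling these pieces, the codimension becomes $\min_{x\in Z_{\oplus^2}}\bigl(\dim\E(f',f'')+\dim\E(f'',f')\bigr)$ once all the $\hom_A$ contributions are collected: as in the bookkeeping at the end of the proof of Proposition~\ref{SPP}, the six $\hom_A$-values entering through $\dim Z_{\oplus^2}$, through $\hom_A(P_1,P_0)$, and through the two rank--nullity identities cancel in pairs. Because any pair $(f',f'')\in\Hom_A(P_1',P_0')\times\Hom_A(P_1'',P_0'')$ is realized by some $x\in Z_{\oplus^2}$ and $f',f''$ vary independently there, the minimum of the sum splits into the sum of the two separate minima; by the remark following Lemma~\ref{tangent} these coincide with the minimal values of $\e(-,-)$ on $\PHom_A(\beta_1',\beta_0')\times\PHom_A(\beta_1'',\beta_0'')$ and on $\PHom_A(\beta_1'',\beta_0'')\times\PHom_A(\beta_1',\beta_0')$, i.e.\ with $\e(\beta_0'-\beta_1',\beta_0''-\beta_1'')$ and $\e(\beta_0''-\beta_1'',\beta_0'-\beta_1')$. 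This produces the asserted codimension.

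I expect the only labor-intensive step to be the purely bureaucratic one of tracking the six $\hom_A$-values through the dimension formulas and verifying the term-by-term cancellation; conceptually nothing new is needed beyond what is used for Proposition~\ref{SPP}. The one point that looks substantive — separating $\min_x(\dim\E(f',f'')+\dim\E(f'',f'))$ into two independent minima — is not really an obstacle, since it follows from the fibrewise description in part~(i) of the Lemma (the factors $f'$, $f''$ range over the full Hom-spaces independently) together with the upper semicontinuity of $\e(-,-)$ already recorded.
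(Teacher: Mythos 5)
Your proposal is correct and follows essentially the same route as the paper: a dimension count via the two projections of $Z_{\oplus^2}$, using generic smoothness for $p_1$, the vector-bundle structure over $\Grs_{\oplus^2}$ for $p_2$, rank--nullity to convert $\dim\Ho$ into $\dim\E$, and upper semicontinuity of $\e(-,-)$ to identify the generic value of the sum with $\e(\beta_0'-\beta_1',\beta_0''-\beta_1'')+\e(\beta_0''-\beta_1'',\beta_0'-\beta_1')$. If anything, you make explicit two points the paper leaves implicit (the identification of the set with $\Img p_1$ and the splitting of the minimum of the sum into the sum of the minima via dense open loci), which is fine.
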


\begin{proof} $Z_{\oplus^2}$ is smooth and irreducible by the first part of the above lemma. We will perform the dimension counting using the two projections of $Z_{\oplus^2}$. From the projection $p_1$, we know from the second part of the above lemma and the generic smoothness that
\begin{equation*}\dim Z_{\oplus^2}=\dim\Img p_1+\min_{x\in Z_{\oplus^2}}\{\dim(\Ho(f',f'')\oplus\Ho(f'',f'))\}.\end{equation*}
While the projection $p_2$ yields that
\begin{equation*}\dim Z_{\oplus^2}=\dim \Grs_{\oplus^2}+\hom_A(P_1^s,P_0^s)+\hom_A(P_1^q,P_0^q).\end{equation*}
So the codimension:\begin{align*}
&\quad\ \! \hom_A(P_1,P_0)-\dim\Img p_1 \\
&=\hom_A(P_1,P_0)-\dim \Grs_{\oplus^2}-(\hom_A(P_1^s,P_0^s)+\hom_A(P_1^q,P_0^q))\\
&\quad\ +\min_{x\in Z_{\oplus^2}}\{\dim(\Ho(f',f'')\oplus\Ho(f'',f'))\}\\
&=\hom_A(P_1^s,P_0^q)-(\hom_A(P_1^s,P_1^q)+\hom_A(P_0^s,P_0^q))+\min_{x\in Z_{\oplus^2}}\{\dim\Ho(f',f'')\}\\
&+\hom_A(P_1^q,P_0^s)-(\hom_A(P_1^q,P_1^s)+\hom_A(P_0^q,P_0^s))+\min_{x\in Z_{\oplus^2}}\{\dim\Ho(f'',f')\}\\
&=\min_{x\in Z_{\oplus^2}}\{\e(f',f'')+\e(f'',f')\}.
\end{align*} This minimal value is exactly $\e(\beta_0'-\beta_1',\beta_0''-\beta_1'')+\e(\beta_0''-\beta_1'',\beta_0'-\beta_1')$.
\end{proof}

Let $l$ be a fixed integer greater than $1$, then the abelian category $\Com^l(\Rep(A))$ of bounded complexes of length $l$ can be viewed as the category $\Rep(A_l)$ for some finite-dimensional algebra $A_l$, which has Krull-Schmidt property \cite[Corollary I.4.8]{ASS}. So its full subcategory consisting of projective presentations is Krull-Schmidt as well. In particular, every presentation $f$ has a unique decomposition $f=f_1\oplus f_2\oplus\cdots\oplus f_s$ with each $f_i$ an indecomposable object in Com$^l(\Rep(A))$. Note that each $f_i$ is a projective presentation.

\begin{definition} $\delta\in\mathbb{Z}^{Q_0}$ is called {\em indecomposable} if a general presentation in $\PHom_A(\delta)$ is indecomposable.
We call $\delta=\delta_1\oplus \delta_2\oplus\cdots\oplus\delta_s$ the {\em canonical decomposition} of $\delta$ if a general element in $\PHom_A(\delta)$ decompose into (indecomposable) ones in each $\PHom_A(\delta_i)$.
\end{definition}

It is easy to see that if a general presentation $f$ decomposes as $f=f_1\oplus f_2\oplus\cdots\oplus f_s$, then each summand $f_i\in\PHom_A(\delta_i)$  can be chosen to be general too, so each $\delta_i$ in the canonical decomposition is indecomposable. Thus Corollary \ref{CDC} can be easily generalized to the following.

\begin{theorem} \label{CDT} $\delta=\delta_1\oplus \delta_2\oplus\cdots\oplus\delta_s$ is the canonical decomposition of $\delta$ if and only if $\delta_1,\cdots,\delta_s$ are indecomposable, and $\e(\delta_i,\delta_j)=0$ for $i\neq j$.
\end{theorem}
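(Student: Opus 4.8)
The plan is to deduce Theorem \ref{CDT} from Corollary \ref{CDC} by an induction on the number of summands $s$, closely mirroring Kac's argument for path algebras. Throughout I would use the two basic facts already established: that $\e(-,-)$ is upper semi-continuous, so that $\e(\delta_i,\delta_j)=0$ is exactly the open condition of Corollary \ref{CDC}, and that a general presentation has a Krull-Schmidt decomposition whose summands, when the total is general, may each be taken general in their own reduced presentation space.

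First, for the ``only if'' direction: if $\delta=\delta_1\oplus\cdots\oplus\delta_s$ is the canonical decomposition, then by definition a general $f\in\PHom_A(\delta)$ splits as $f_1\oplus\cdots\oplus f_s$ with $f_i$ general (hence indecomposable) in $\PHom_A(\delta_i)$, so each $\delta_i$ is indecomposable. To see $\e(\delta_i,\delta_j)=0$ for $i\neq j$, group the summands as $f'=f_i$ and $f''=\bigoplus_{k\neq i}f_k$; then $f$ lies in the locus of Corollary \ref{CDC} for the splitting $(\delta_i,\;\delta-\delta_i)$, and since a general $f$ lies there this locus has codimension $0$, forcing $\e(\delta_i,\delta-\delta_i)+\e(\delta-\delta_i,\delta_i)=0$, in particular $\e(\delta_i,\delta-\delta_i)=0$ and $\e(\delta-\delta_i,\delta_i)=0$. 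One then needs that $\e(\delta_i,\delta-\delta_i)=0$ implies $\e(\delta_i,\delta_j)=0$ for each $j\neq i$; this follows from Lemma \ref{DC} together with the additivity $\E(f',\,g''\oplus h'')\cong\E(f',g'')\oplus\E(f',h'')$ coming from the definition of $\E$ as a cokernel of a map built additively out of $\Hom$, applied to $f''$ a general (hence direct sum of general) presentation realizing the minimal value.

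For the ``if'' direction, suppose $\delta_1,\dots,\delta_s$ are indecomposable with $\e(\delta_i,\delta_j)=0$ for all $i\neq j$. I would show by induction on $s$ that a general presentation in $\PHom_A(\delta)$, $\delta=\sum\delta_i$, decomposes as claimed. For $s=2$, Corollary \ref{CDC} gives that the locus of $f=f'\oplus f''$ with $f'\in\PHom_A(\delta_1)$, $f''\in\PHom_A(\delta_2)$ has codimension $\e(\delta_1,\delta_2)+\e(\delta_2,\delta_1)=0$, so a general $f$ is of this form; moreover one may arrange $f',f''$ general in their reduced spaces, hence indecomposable, and the decomposition of a general presentation is unique by Krull-Schmidt, so $\delta=\delta_1\oplus\delta_2$ is the canonical decomposition. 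For the inductive step, apply the $s=2$ case to the pair $(\delta_1,\;\delta_2+\cdots+\delta_s)$: here one must know $\e(\delta_1,\delta_2+\cdots+\delta_s)=0$ and $\e(\delta_2+\cdots+\delta_s,\delta_1)=0$, which again follow from the additivity of $\E$ in each argument together with the vanishing of $\e(\delta_1,\delta_j)$ and $\e(\delta_j,\delta_1)$, provided the minimal values are simultaneously attained by a single presentation — and they are, since a general presentation of $\delta_2+\cdots+\delta_s$ already decomposes by the inductive hypothesis, so one may take $f''$ of the form $\bigoplus_{j\geq2}f_j$ with each $f_j$ general. Then a general $f\in\PHom_A(\delta)$ splits as $f_1\oplus g$ with $f_1$ general in $\PHom_A(\delta_1)$ and $g$ general in $\PHom_A(\delta_2+\cdots+\delta_s)$, and by the inductive hypothesis $g$ further splits as $f_2\oplus\cdots\oplus f_s$, giving the desired decomposition; uniqueness is again Krull-Schmidt.

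The main obstacle, and the only point requiring real care, is the repeated passage from the pairwise vanishing $\e(\delta_i,\delta_j)=0$ to the vanishing of $\e$ against a direct sum, i.e.\ establishing that $\e(\delta_i,\,\sum_{j\neq i}\delta_j)=0$ (and its transpose). This is not purely formal because $\e(\alpha,\beta)$ is defined as a \emph{minimum} over the relevant presentation spaces, so one must check that the minimum for the sum is attained at a presentation that is itself a direct sum of minimizers for the pieces — which is where the inductive hypothesis (a general presentation of $\sum_{j\neq i}\delta_j$ is already decomposed) and Lemma \ref{DC} (reducing $\E$ to $\Coker f_N'$, hence to the cokernel $N$, which splits) do the work. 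Once this additivity of the generic $\e$ is in hand, everything else is bookkeeping with Corollary \ref{CDC} and the Krull-Schmidt property of $\Com^l(\Rep(A))$.
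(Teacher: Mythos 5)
Your proposal is correct and follows essentially the same route as the paper: both directions rest on Corollary \ref{CDC} together with the remark that summands of a general presentation may be taken general, and the ``if'' direction is the same induction on $s$ (you pair $\delta_1$ with the rest, the paper pairs $\delta_1\oplus\cdots\oplus\delta_{s-1}$ with $\delta_s$). The only differences are cosmetic: for ``only if'' the paper applies Corollary \ref{CDC} directly to the general summand in $\PHom_A(\delta_i+\delta_j)$, avoiding your detour through $\e(\delta_i,\delta-\delta_i)$ and additivity, while your explicit justification of $\e(\delta',\delta_s)=\e(\delta_s,\delta')=0$ via additivity of $\E$ in each argument correctly fills in the step the paper dismisses as ``clearly.''
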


\begin{proof}
$\Rightarrow$. We see from the canonical decomposition of $\delta$ that a general presentation in $\PHom_A(\delta)$ has a summand in $\PHom_A(\delta_i+\delta_j)\ (i\neq j)$. But this summand is general in $\PHom_A(\delta_i+\delta_j)$ and it is a direct sum of two presentations in $\PHom_A(\delta_i)$ and $\PHom_A(\delta_j)$. By Corollary \ref{CDC}, $\e(\delta_i,\delta_j)=0$.

$\Leftarrow$. We prove by induction on $s$. $s=1$ is the trivial case. Now assume this is true for $s-1$, then $\delta'=\delta_1\oplus\cdots\oplus\delta_{s-1}$ is the canonical decomposition of $\delta'$ and clearly $\e(\delta',\delta_s)=\e(\delta_s,\delta')=0$. By Corollary \ref{CDC}, a general presentation $f\in\PHom_A(\delta)$ can be decomposed to $f'\in\PHom_A(\delta')$ and $f''\in\PHom_A(\delta_s)$, but $f'$ is general in $\PHom_A(\delta')$, so we can finish the proof by the induction hypothesis.
\end{proof}

The above theorem can be specialized to the case of path algebras in a similar fashion as Corollary \ref{SC}.

\begin{corollary}[Kac] \label{KC} $\alpha=\alpha_1\oplus \alpha_2\oplus\cdots\oplus\alpha_s$
is the canonical decomposition of $\alpha$ if and only if $\ext(\alpha_i,\alpha_j)=0$ for $i\neq j$ and a general representation in $\Rep_{\alpha_i}(kQ)$ is indecomposable for each $\alpha_i$ .
\end{corollary}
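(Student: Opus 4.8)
The plan is to deduce Kac's theorem from Theorem \ref{CDT} by translating between presentations of the path algebra $kQ$ and representations of $kQ$, using the canonical presentation \eqref{canproj} and Theorem \ref{T:PP2Rep} (via Corollary \ref{PPRep}). First I would recall that for $A=kQ$ (no relations), every $M\in\Rep_\alpha(kQ)$ has projective dimension $\leq 1$, so its minimal presentation lives in $\PHom_{kQ}(D\alpha,\alpha)$ with $\delta$-vector $\alpha-D\alpha$; moreover by the discussion preceding Corollary \ref{SC} the cokernel of a general presentation in $\PHom_{kQ}^{can}(\alpha)=\PHom_{kQ}(D\alpha,\alpha)$ is a general representation in $\Rep_\alpha(kQ)$. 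The key dictionary entry is Lemma \ref{DC}: since a general canonical presentation $f_M$ is injective with cokernel $M$, we have $\E(f_M,f_N)=\Ext^1_{kQ}(M,N)$, and hence taking minimal values, $\e(\alpha_i-D\alpha_i,\alpha_j-D\alpha_j)=\ext(\alpha_i,\alpha_j)$ for the corresponding $\delta$-vectors.

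The main steps, then, are as follows. \textbf{Step 1.} Set $\delta_i=\alpha_i-D\alpha_i$ and $\delta=\alpha-D\alpha=\sum_i\delta_i$. Observe that $\PHom_{kQ}(\delta_i)$ is homotopy equivalent to $\PHom_{kQ}^{can}(\alpha_i)$ — here one must check that $\alpha_i$ is recovered from $\delta_i$, i.e. that the canonical presentation of a general representation of dimension $\alpha_i$ is already minimal, which holds generically because $\beta_0(M)$ and $\beta_1(M)$ are, by the formulas involving $\Coker\varphi$ and $\Ker\varphi/\Img\psi$ in \eqref{alphagamma}, upper-semicontinuous and attain $(D\alpha_i,\alpha_i)$ exactly when no common direct summand of the form $P_v$ is split off. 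Actually it is cleaner to phrase this as: a general presentation in $\PHom_{kQ}^{can}(\alpha)$ is homotopy equivalent to a general one in $\PHom_{kQ}(\delta)$, which is exactly the statement quoted from \cite[Theorem V.2.2]{IOTW} before Proposition \ref{SPP}. \textbf{Step 2.} Using Corollary \ref{PPRep}, transport the canonical decomposition statement across the correspondence $pq^{-1}$: a general presentation in $\PHom_{kQ}^{can}(\alpha)$ decomposes as a direct sum of presentations in $\PHom_{kQ}^{can}(\alpha_i)$ if and only if a general representation in $\Rep_\alpha(kQ)$ decomposes as $\bigoplus M_i$ with $M_i$ general of dimension $\alpha_i$ — because the functor $pq^{-1}=\Coker$ preserves openness and irreducibility of the relevant strata, and because direct sums of presentations correspond to direct sums of cokernels (the canonical presentation is additive). \textbf{Step 3.} Invoke Theorem \ref{CDT}: the decomposition $\delta=\bigoplus\delta_i$ is canonical iff each $\delta_i$ is indecomposable and $\e(\delta_i,\delta_j)=0$ for $i\neq j$. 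Translate both conditions: $\delta_i$ indecomposable (as a $\delta$-vector) is equivalent to a general presentation in $\PHom_{kQ}(\delta_i)$ being indecomposable, which by Step 2 is equivalent to a general representation in $\Rep_{\alpha_i}(kQ)$ being indecomposable (this uses that an injective presentation is indecomposable as a complex iff its cokernel is indecomposable — a general canonical presentation is injective, and a short exact sequence $0\to P(D\alpha_i)\to P(\alpha_i)\to M_i\to 0$ with $M_i=L\oplus N$ splits accordingly). Meanwhile $\e(\delta_i,\delta_j)=\ext(\alpha_i,\alpha_j)$ by Lemma \ref{DC} as noted above. Assembling these equivalences yields exactly the statement of Corollary \ref{KC}.

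The step I expect to be the main obstacle is making Step 1 airtight: one must be sure that passing from $\alpha_i$ to the reduced $\delta$-vector $\delta_i$ and back does not lose information, i.e. that a \emph{general} representation of dimension $\alpha_i$ has minimal presentation of Betti type exactly $(D\alpha_i,\alpha_i)$ with no spurious $P_v\xrightarrow{\Id}P_v$ summands, so that the canonical and reduced presentation spaces genuinely have homotopy-equivalent general members. For a hereditary algebra this is standard — the canonical presentation is minimal precisely when $M_i$ has no direct summand $S_v$ with $v$ a sink contributing a free summand, and one can either cite \cite[Theorem V.2.2]{IOTW} directly or note that for the purposes of the \emph{decomposition} statement it suffices to work throughout in $\PHom_{kQ}^{can}$ and only invoke homotopy invariance of $\E$ and of indecomposability (both established earlier) rather than literally replacing spaces. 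A secondary subtlety is the indecomposability translation in Step 3: one should remark that the indecomposable objects of $K^2(\proj kQ)$ that are not of the form $P_v[1]$ are exactly the minimal presentations of indecomposable representations (the preprojective/preinjective picture at the end of Section \ref{S:PP}), and that for a path algebra every indecomposable $\delta_i$ with $[\delta_i]_+\neq 0\neq[-\delta_i]_+$ — equivalently a genuine root direction — corresponds to an indecomposable representation rather than a shifted projective; the degenerate cases $\alpha_i=0$ are vacuous. Once these bookkeeping points are settled, everything else is a direct specialization.
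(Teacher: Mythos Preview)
Your proposal is correct and follows essentially the same route the paper indicates: the paper's proof is the single sentence ``specialize Theorem \ref{CDT} to path algebras in a similar fashion as Corollary \ref{SC}'', and your Steps 1--3 are exactly that unfolding --- work with the canonical presentations $\PHom_{kQ}^{can}(\alpha_i)$, transport decompositions via Corollary \ref{PPRep}, and translate $\e(\delta_i,\delta_j)$ to $\ext(\alpha_i,\alpha_j)$ using Lemma \ref{DC} (generic injectivity of the canonical presentation). Your extra care about the reduced-versus-canonical presentation space and the indecomposability translation is just the bookkeeping the paper leaves implicit; your suggested resolution (work throughout in $\PHom_{kQ}^{can}$ and use homotopy invariance of $\E$, rather than literally passing to $\PHom_{kQ}(\delta)$) is the cleanest way to handle it.
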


Recall from Section \ref{S:PP} that $\E(f,f)$ can be interpreted as the normal space of $f$ in $\Hom_A(P_1,P_0)$.

\begin{definition} A presentation $f$ is called {\em rigid} if $\E(f,f)=0$. An indecomposable $\delta\in\mathbb{Z}^{Q_0}$ is called {\em real} if there is a rigid $f\in\PHom_A(\delta)$; is called {\em tame} if it is not real but $\e(\delta,\delta)=0$; is called {\em wild} if $\e(\delta,\delta)>0$.
\end{definition}

\noindent If $\delta$ is real or tame, then by Theorem \ref{CDT}, $m\delta=\underbrace{\delta\oplus\cdots\oplus\delta}_m$ is the canonical decomposition for any $m\in\mathbb{N}$. In particular, $\delta$ is indivisible. \begin{question}If $\delta$ is wild, are all $m\delta$ wild (in particular indecomposable)? \end{question}

\section{Maximal Rigid Presentations} \label{S:RP}

\begin{definition} A rigid presentation $f$ is called {\em generating} if $K^b(\proj A)$ is generated by $\Ind(f):=$\{nonisomorphic indecomposable direct summands of $f$\}.
\end{definition}

We will first show how to turn a rigid presentation $f$ into a generating presentation.
The procedure is very similar to the classical one given by K. Bongartz.
We denote the object complex with $A$ in degree $0$ simply by $A$, then $\E(A,f)=0$.
Let $e=\e(f,A)=\hom_{K^b(\proj A)}(f[\minus 1],A)$.
We choose a basis of $\E(f,A)$ and take $f^e[\minus 1]\xrightarrow{can} A$ to be the canonical map with respect to this basis.
Using the triangulated structure of $K^b(\proj A)$, we can complete the above map to a triangle
\begin{equation}\label{triangle+}f^+[\minus 1]\to f^e[\minus 1]\xrightarrow{can} A\to f^+.\end{equation}
It is easy to see that  the mapping cone $f^+$ is a complex concentrated in degree $0,\minus 1$, in other words, it is a presentation. Let us verify that $\E(f,f^+)=0$. We denote $\Hom_{K^b(\proj A)}$ simply by $\Hom$. Apply $\Hom(f,-)$ to the triangle \eqref{triangle+}, we get
\begin{equation*}\Hom(f,f^e)\xrightarrow{\partial}\Hom(f,A[1])\to\Hom(f,f^+[1])\to\Hom(f,f^e[1])=0.\end{equation*}
By construction \ref{triangle+}, $\partial$ is surjective so we have that $\E(f,f^+)=\Hom(f,f^+[1])=0$.  Apply $\Hom(-,f)$ and $\Hom(-,f^+)$, we get two exact sequences
\begin{align*}& 0=\Hom(f^e,f[1])\to\Hom(f^+,f[1])\to\Hom(A,f[1])=0,\\
& 0=\Hom(f^e,f^+[1])\to\Hom(f^+,f^+[1])\to\Hom(A,f^+[1])=0.\end{align*}
So we can conclude $\E(f\oplus f^+,f\oplus f^+)=0$, i.e., $f\oplus f^+$ is rigid, and in fact generating by the above triangle. We call $\tilde{f}^+:=f\oplus f^+$ the positive completion of $f$. Similarly, we can construct the negative completion of $f$ by using $A[1]$ instead of $A$. Namely, let $e^{-}=\e(A[1],f)=\hom(A,f)$ and take the triangle
\begin{equation}\label{triangle-}f^-[\minus 1]\to A\xrightarrow{can} f^{e^{-}}\to f^-.\end{equation}
Then $\tilde{f}^-=f\oplus f^-$ is the negative completion of $f$.

Next, we claim that $n(f):=|\Ind(f)|\leqslant r(A):=\rank(K_0(\Rep(A)))$ for rigid $f$. We are going to prove this through classical tilting theory. The following construction will be used to pass rigid presentations to classical tilting modules.

\begin{definition}
The {\em universal regularization} of $A$ with respect to a projective presentation $P_1\xrightarrow{f} P_0$ is an algebra epimorphism $\pi^f:A\twoheadrightarrow A^f$ universal with respect to the property that $A^f\otimes_A P_1 \xrightarrow{A^f\otimes_A f}A^f\otimes_A P_0$ is injective.
\end{definition}

Here is a concrete construction of $\pi^f$. Suppose that $P_0=\bigoplus_{i=1}^m P_{u_i}$ and $P_1=\bigoplus_{j=1}^n P_{v_j}$, where $P_{u_i},P_{v_j}$ are indecomposable projective. Then $P_0,P_1$ are (column) vectors with entries in $P_{u_i},P_{v_j}$, and $f$ can be represented by an $n\times m$ matrix $(a_{ji})$ with $a_{ji}$ a linear combination of paths from $u_i$ to $v_j$. Let $I_0$ be the two-sided ideal in $A$ generated by the entries of all vectors in $\Ker f$ and $A_1=A/I_0$. If $A_1\otimes_A f$ becomes injective, then we take $A^f=A_1$ and $\pi^f$ to be the canonical projection. Otherwise, we repeat the previous step for the projective presentation in $\Rep(A_1): A_1\otimes_A P_1 \xrightarrow{A_1\otimes_A f}A_1\otimes_A P_0$. This procedure must terminate in finitely many, say $s$ steps. We get a sequence of projections:
\begin{equation*}A_0=A\to A_1=A_0/I_0\to\cdots\to A_s=A_{s-1}/I_{s-1}.\end{equation*}
Our desired $\pi^f$ is the composition of these projections. In particular, if $f_v:=P_v\to 0$, then by construction $A^{f_v}=A/\langle e_v\rangle$, where $e_v$ is the trivial path corresponding to the vertex $v$.

\begin{lemma} \label{UR}
If $\E(f,f')=0$ and $M\in\Rep(A)$ is the cokernel of $P_1'\xrightarrow{f'} P_0'$, then $M$ is in fact a representation of $A^f$ $($of projective dimension one$)$, and moreover $\E(A^f\otimes_A f,A^f\otimes_A f')=0$.
\end{lemma}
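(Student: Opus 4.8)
The plan is to read the hypothesis through Lemma \ref{DC}, convert it into a surjectivity statement, and then unwind the explicit matrix description of $\pi^f$ to see that $M$ is killed by the regularizing ideals. By Lemma \ref{DC}, with $M=\Coker f'$, one has $\E(f,f')\cong\Coker\bigl(\Hom_A(f,M)\colon\Hom_A(P_0,M)\to\Hom_A(P_1,M)\bigr)$, so $\E(f,f')=0$ says exactly that $\Hom_A(f,M)$ is surjective. Writing $P_0=\bigoplus_{i=1}^m P_{u_i}$, $P_1=\bigoplus_{j=1}^n P_{v_j}$ and representing $f$ by the matrix $(a_{ji})$ with $a_{ji}\in e_{v_j}Ae_{u_i}$ as in the construction of $\pi^f$, and identifying $\Hom_A(P_0,M)=\bigoplus_i M(u_i)$, $\Hom_A(P_1,M)=\bigoplus_j M(v_j)$, the map $\Hom_A(f,M)$ sends $(m_i)_i$ to $\bigl(\sum_i a_{ji}m_i\bigr)_j$.

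The heart of the argument is to show that the ideal $I_0$ annihilates $M$. Pick any kernel vector $(p_1,\dots,p_n)$ of $f$, so $\sum_j p_j a_{ji}=0$ for every $i$, fix an index $j$ and an element $m\in M(v_j)$, and use surjectivity of $\Hom_A(f,M)$ to choose $(m_i)_i$ with $\sum_i a_{j'i}m_i$ equal to $m$ when $j'=j$ and to $0$ otherwise. Then
\begin{equation*}
p_j\cdot m=\sum_{j'}p_{j'}\Bigl(\sum_i a_{j'i}m_i\Bigr)=\sum_i\Bigl(\sum_{j'}p_{j'}a_{j'i}\Bigr)m_i=0,
\end{equation*}
and since $p_j=p_je_{v_j}$ acts by zero on every other component, $p_jM=0$; hence $I_0M=0$ and $M$ is a module over $A_1=A/I_0$. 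Now $\Hom_{A_1}(A_1\otimes_A f,M)$ is again surjective — under the extension-of-scalars adjunction $\Hom_{A_1}(A_1\otimes_A P_i,M)\cong\Hom_A(P_i,M)$ it is literally $\Hom_A(f,M)$ — so the same computation, applied to the matrix of $A_1\otimes_A f$, shows $M$ is a module over $A_2$, and, inductively, over every $A_k$, in particular over $A^f=A_s$. This gives the first assertion.

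For the vanishing over $A^f$: since $M$ is already an $A^f$-module, $\Coker(A^f\otimes_A f')=M$, so Lemma \ref{DC} applied over $A^f$ gives $\E(A^f\otimes_A f,A^f\otimes_A f')\cong\Coker\bigl(\Hom_{A^f}(A^f\otimes_A f,M)\bigr)$, and the same adjunction identifies this cokernel with $\Coker\Hom_A(f,M)\cong\E(f,f')=0$. Finally, $A^f\otimes_A f$ is injective by the defining property of the universal regularization; granting that $A^f\otimes_A f'$ is likewise injective, the sequence $0\to A^f\otimes_A P_1'\to A^f\otimes_A P_0'\to M\to0$ is a projective resolution, so $\operatorname{pd}_{A^f}M\le1$.

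I expect the main obstacle to be the bookkeeping of the reduction step: verifying that the entries of kernel vectors genuinely kill $M$ (done above) and, more to the point, that surjectivity of $\Hom(f,M)$ is preserved at each of the finitely many regularization stages, so that forming $A^f$ never destroys the module $M$. A secondary point that must be pinned down is the injectivity of $A^f\otimes_A f'$, on which the estimate $\operatorname{pd}_{A^f}M\le1$ rests.
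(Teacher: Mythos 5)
Your proof is correct and follows essentially the same route as the paper's: both read $\E(f,f')=0$ via Lemma \ref{DC} as surjectivity of $\Hom_A(f,M)$, deduce that the entries of kernel vectors of $f$ annihilate $M$ (the paper phrases this as $g^M=0$ for a syzygy map $g$ whose image is $\Ker f$, which is your matrix computation in disguise), and then pass up the tower $A\to A_1\to\cdots\to A^f$ by the adjunction argument, which is exactly the paper's ``$\E(A_1\otimes_A f,A_1\otimes_A f')$ still equals $0$'' induction step. Your caveat about the parenthetical ``projective dimension one'' is fair but harmless: the paper's own proof does not address it either, and in the only place the lemma is invoked one has $f'=f$, where injectivity of $A^f\otimes_A f$ is the defining property of the universal regularization, so your conditional argument gives $\operatorname{pd}_{A^f}M\leq 1$ exactly where it is needed.
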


\begin{proof}
Suppose that $\cdots\to P_2\xrightarrow{g}P_1\xrightarrow{f}P_0$ is a projective resolution, then $g$ is represented by elements in $\Ker f$. Apply $\Hom(-,M)$ to the resolution, we get
\begin{equation*}\Hom_A(P_0,M)\to\Hom_A(P_1,M)\xrightarrow{g^M}\Hom_A(P_2,M)\to\cdots.\end{equation*}
Since $\E(f,f')=0$, $g^M$ must be a zero map by Lemma \ref{DC}. In other words, the representation $M$ satisfies all the relations generated by the entries of all vectors in $\Ker f$. So $M\in\Rep(A_1)$, where $A_1=A/I_0$ as in the construction above. In the mean while, $\E(A_1\otimes_A f,A_1\otimes_A f')$ still equals $0$, so we can complete the proof by induction.
\end{proof}

Now we prove the claim that $n(f)\leqslant r(A)$ for rigid $f$. Suppose that $n(f)>r(A)$ and in addition $P_v[1]\notin\Ind(f)$. Applying the above lemma to $f'=f$, we see by Lemma \ref{DC} that $M=\Coker(f)$ as a classical (partial) tilting $A^f$-module has more than $r(A)=r(A^f)$ nonisomorphic indecomposable summands, which contradicts the classical tilting theory \cite[Corollary VI.4.4]{ASS}. If $\Ind(f)$ contains some $P_v[1]$, apply the above lemma to $f_v$ and $f$, then we can easily reduce to the previous situation for another algebra $A'$ with $r(A')<r(A)$.

\begin{theorem} \label{MRT} For a rigid presentation $f$, the following are equivalent:\begin{enumerate}
\item{} $|\Ind(f)|=\rank(K_0(\Rep(A)))$.
\item{} $f$ is maximal rigid in the sense that $\E(f\oplus f',f\oplus f')\neq 0$ for any indecomposable $f'\notin\Ind(f)$.
\item{} $f$ is generating.
\end{enumerate}\end{theorem}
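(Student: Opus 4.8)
The plan is to prove the cycle of implications $(1)\Rightarrow(3)\Rightarrow(2)\Rightarrow(1)$, using the positive/negative completion construction and the bound $n(f)\le r(A)$ already established. For $(1)\Rightarrow(3)$: given a rigid $f$ with $|\Ind(f)|=r(A)$, form the positive completion $\tilde f^+=f\oplus f^+$, which is generating by the discussion around the triangle \eqref{triangle+}. Since $\tilde f^+$ is rigid and generating, its summands span $K^b(\proj A)$, hence $|\Ind(\tilde f^+)|\ge r(A)$; but also $|\Ind(\tilde f^+)|\le r(A)$ by the bound just proved, so $|\Ind(\tilde f^+)|=r(A)=|\Ind(f)|$. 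As $\Ind(f)\subseteq\Ind(\tilde f^+)$ and both have the same (finite) cardinality, $f^+$ adds no new indecomposable summand, i.e.\ every summand of $f^+$ already lies in $\Ind(f)$; therefore $\Ind(f)=\Ind(\tilde f^+)$ generates $K^b(\proj A)$ and $f$ is generating.

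For $(3)\Rightarrow(2)$: suppose $f$ is rigid and generating but not maximal, so there is an indecomposable $f'\notin\Ind(f)$ with $\E(f\oplus f',f\oplus f')=0$; in particular $\E(f,f')=\E(f',f)=\E(f',f')=0$. I will derive a contradiction from the homotopy-theoretic description of $\E$: since $\Ind(f)$ generates the triangulated category $K^b(\proj A)$, the class of $f'$ lies in the triangulated subcategory generated by $\Ind(f)$, and one shows by a standard dévissage along triangles (using that $\E(g,-)=\Hom_{K^b(\proj A)}(g[-1],-)$ and $\E(-,g)$ are cohomological in each variable, together with $\E(f,f')=\E(f',f)=0$) that $f'$ must be a direct summand of a direct sum of shifts/copies of objects in $\Ind(f)$ in degrees $0,-1$; combined with indecomposability of $f'$ in $\Com^l(\Rep A)$ and the Krull--Schmidt property, this forces $f'\in\Ind(f)$, a contradiction. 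The implication $(2)\Rightarrow(1)$: if $f$ is maximal rigid, apply the positive completion; $\tilde f^+=f\oplus f^+$ is rigid, and maximality forces every indecomposable summand of $f^+$ to already lie in $\Ind(f)$, so $\Ind(f)=\Ind(\tilde f^+)$ generates $K^b(\proj A)$. Then the span of $\Ind(f)$ in $K_0(K^b(\proj A))\cong K_0(\Rep A)$ is everything, giving $|\Ind(f)|\ge r(A)$; with the bound $n(f)\le r(A)$ this yields equality.

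The main obstacle is the dévissage argument in $(3)\Rightarrow(2)$: turning ``$\Ind(f)$ generates $K^b(\proj A)$'' plus the vanishing of all the relevant $\E$-groups into the conclusion that a candidate new summand $f'$ must actually be isomorphic to one of the existing indecomposable summands. One must be careful that $K^b(\proj A)$ may be genuinely infinite (the algebra need not be hereditary), so one cannot simply count; instead the argument should run through the universal regularization $A^f$ of Lemma \ref{UR}: passing to $A^f$ turns $\Coker f$ and $\Coker f'$ into partial tilting $A^f$-modules of projective dimension $\le 1$ with $\Ext^1$ vanishing among them, and classical tilting theory \cite[Corollary VI.4.4]{ASS} then gives both the bound and the rigidity-maximality equivalence on that side; the remaining work is to track the ``$P_v[1]$'' summands separately, exactly as in the proof that $n(f)\le r(A)$, reducing to a quotient algebra $A'$ with smaller rank and inducting. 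I expect the bookkeeping between the shifted summands $P_v[1]$ and the honest presentations to be the only genuinely delicate point; the rest is a direct transcription of the Bongartz-style completion argument already spelled out in the text.
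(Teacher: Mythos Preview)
Your implications $(1)\Rightarrow(3)$ and $(2)\Rightarrow(1)$ are correct, and in fact they are exactly the paper's arguments for $(2)\Rightarrow(3)$ and for $(2)\Rightarrow(3)\Rightarrow(1)$, just relabelled. The paper runs the cycle in the order $(1)\Rightarrow(2)\Rightarrow(3)\Rightarrow(1)$, and each step there is one line: $(1)\Rightarrow(2)$ because a new indecomposable summand would produce a rigid presentation with $r(A)+1$ nonisomorphic summands, contradicting the bound $n(f)\le r(A)$ already established; $(2)\Rightarrow(3)$ is your completion argument; $(3)\Rightarrow(1)$ is the rank count $n(f)\ge r(A)$ together with the bound.

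The genuine problem in your proposal is $(3)\Rightarrow(2)$. The d\'evissage you sketch is not a standard fact and does not follow from the hypotheses you list: knowing that $f'$ lies in the triangulated subcategory generated by $\Ind(f)$ and that $\E(f,f')=\E(f',f)=0$ does \emph{not} force $f'$ to be a direct summand of a sum of copies of the $f_i$. For that kind of conclusion you would need vanishing of $\Hom_{K^b(\proj A)}(f_i[-k],f')$ and $\Hom_{K^b(\proj A)}(f'[-k],f_i)$ for \emph{all} $k>0$, not just $k=1$, and nothing in the setup gives you that when $A$ has large global dimension. The alternative you propose via universal regularization and classical tilting would ultimately work, but it is just a second derivation of the inequality $n(f)\le r(A)$ in disguise. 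The clean fix is to drop the d\'evissage entirely and replace your $(3)\Rightarrow(2)$ by the two one-liners $(3)\Rightarrow(1)$ (rank count) and $(1)\Rightarrow(2)$ (the bound); then your cycle collapses to the paper's.
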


\begin{proof} We have already proved (1) implies (2). Now assume that $f$ is maximal rigid, then $K^b(\proj A)$ can be generated by $\Ind(f)$, otherwise the (positive or negative) completion of $f$ generates $K^b(\proj A)$ contradicting the maximality of $f$. Hence (2) implies (3). Finally if $f$ is generating, then $n(f)\geqslant r(A)$ because $K_0(K^b(\proj A))$ is isomorphic to $K_0(\Rep(A))$. So (3) implies (1) finishing the proof.
\end{proof}

\begin{remark} The above proposition may be proved entirely in the derived category setting. Our $ad\ hoc$ approach looks simpler but depends on the classical tilting theory.
\end{remark}

\begin{definition}
For any maximal rigid presentation $f$, suppose that $\Ind(f)=\{f_1,f_2,\cdots,f_n\}$ and let $f_{\widehat{k}}=\oplus_{i\neq k}f_i$. If $\Ind(f)=\Ind(\tilde{f}_{\widehat{k}}^+)$ (resp. $=\Ind(\tilde{f}_{\widehat{k}}^-)$), then we say $f_k$ is the $positive$ (resp. $negative$) $complement$ of $f_{\widehat{k}}$, denoted by $f_+$ (resp. $f_-$). We define the $mutation$ of $\Ind(f)$ at $f_k$ to be $\Ind(\tilde{f}_{\widehat{k}}^-)$ (resp. $\Ind(\tilde{f}_{\widehat{k}}^+)$).
\end{definition}

Now suppose that $|\Ind(f)|=n-1$ and $f_+,f_-$ be the positive and negative complements. First we claim that $f_+$ and $f_-$ are always different. Let $H$ be the hyperplane in $K_0(K^b(\proj A))\cong\mathbb{Z}^{|Q_0|}$ spanned by the classes in $\Ind(f)$. Since the classes of $A$ and $A[1]$ lie in two different sides of $H$, we see from the two triangles \eqref{triangle+},\eqref{triangle-} that the classes of $f_+$ and $f_-$ are also separated by $H$.

Let $f_c$ be any complement of $f$ and we claim that $f_c$ must be either $f_+$ or $f_-$.  Otherwise, $f_c$ and one of $\delta(f_+),\delta(f_-)$ must  stay in the same side of $H$. Suppose that $\delta(f_c)$ and $\delta(f_+)$ live together, then the interiors of rational convex cones spanned by the classes in $\Ind(f\oplus f_+)$ and $\Ind(f\oplus f_c)$ must intersect, so by Theorem \ref{CDT} we can find some $\delta$ which possesses two canonical decompositions, one involving $\delta(f_+)$ and the other involving $\delta(f_c)$ but no $\delta(f_+)$. This contradicts the uniqueness of the canonical decomposition.

Applying $\Hom(f_+,-)$ to the triangle $f^-[\minus 1]\to A\xrightarrow{can} f^{e^{-}}\to f^-$, we get
\begin{equation*}0=\Hom(f_+,f^e[1])\to\Hom(f_+,f^-[1])\to\Hom(f_+,A[2])=0.\end{equation*}
So $\E(f_+,f^-)=0$ and hence $\E(f_+,f_-)=0$. Then $\e(f_-,f_+)=d\neq 0$ by Theorem \ref{MRT}. Complete $f_-^d\xrightarrow{can} f_+[1]$ to a triangle $f_+\to f'\to f_-^d\xrightarrow{can} f_+[1]$ and apply $\Hom(f_-,-)$ and $\Hom(-,f_-)$ to it, we get
\begin{gather*}
\Hom(f_-,f_-^d)\xrightarrow{can}\Hom(f_-,f_+[1])\to\Hom(f_-,f'[1])\to\Hom(f_-,f_-^d[1])=0,\\
0=\Hom(f_-^d,f_-[1])\to\Hom(f',f_-[1])\to\Hom(f_+,f_-[1]))=0.
\end{gather*}
So $\E(f_-,f')=\E(f',f_-)=0$. Apply $\Hom(f_+,-)$ to the same triangle and we get
\begin{equation*}0=\Hom(f_+,f_+[1])\to\Hom(f_+,f'[1])\to\Hom(f_+,f_-^d[1])=0,\end{equation*}
so $\E(f_+,f')=0$. Finally apply $\Hom(-,f')$ to the triangle again and we get
\begin{equation*}0=\Hom(f_-^d,f'[1])\to\Hom(f',f'[1])\to\Hom(f_+,f'[1])=0,\end{equation*}
so $\E(f',f')=0$. Hence $\E(f'\oplus f_-,f'\oplus f_-)=0$. Similar argument goes through the triangle $f_+^d\to f''\to f_-\xrightarrow{can} f_+^d[1]$ as well. We summarize as follows.

\begin{proposition} \label{+-}
$f_+,f_-$ are the two and only two complements of $f$. They are related by the triangle $f_+\to f'\to f_-^d\to f_+[1]$ and $f_+^d\to f''\to f_-\to f_+^d[1]$, where $d=\e(f_-,f_+)$. Moreover, both $f'\oplus f_-$ and $f''\oplus f_+$ are rigid and $\E(f_+,f_-)=\E(f_+,f')=\E(f'',f_-)=0$. In particular, $d=1$ if and only if  $f'=f''$ belongs to the subcategory generated by $\Ind(f)$.
\end{proposition}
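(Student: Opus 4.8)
The bulk of this proposition has in fact already been established in the discussion immediately preceding the statement: that $f_+$ and $f_-$ are the only two complements of $f$, that they sit in the two exchange triangles as displayed (with $d=\e(f_-,f_+)$), that $f'\oplus f_-$ and $f''\oplus f_+$ are rigid, and that $\E(f_+,f_-)=\E(f_+,f')=\E(f'',f_-)=0$. So the proof would simply collect these facts, and the only genuinely new point is the final ``in particular'' assertion, which I will address in two halves: the equivalence $d=1\iff f'=f''$, and then $d=1\Rightarrow f'$ lies in the triangulated subcategory generated by $\Ind(f)$.

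For the equivalence, I would argue both directions directly. If $d=1$, the two defining triangles read $f_+\to f'\to f_-\xrightarrow{can}f_+[1]$ and $f_+\to f''\to f_-\xrightarrow{can}f_+[1]$; since $\E(f_-,f_+)=\Hom_{K^b(\proj A)}(f_-,f_+[1])$ is then one‑dimensional, the two connecting morphisms are nonzero scalar multiples of each other, and by the standard uniqueness of mapping cones (TR3 together with a rescaling of the connecting morphism) one gets an isomorphism $f'\cong f''$ compatible with the identities on $f_+$ and $f_-$. Conversely, passing to $K_0(K^b(\proj A))=\mathbb{Z}^{Q_0}$ and reading off $\delta$‑vectors from the two triangles gives $\delta(f')=\delta(f_+)+d\,\delta(f_-)$ and $\delta(f'')=d\,\delta(f_+)+\delta(f_-)$, so $f'=f''$ forces $(d-1)(\delta(f_+)-\delta(f_-))=0$; since $\delta(f_+)$ and $\delta(f_-)$ lie on opposite sides of the hyperplane $H$ spanned by the classes in $\Ind(f)$ they are distinct, whence $d=1$.

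It then remains to see that, when $d=1$, $f'\in\operatorname{add}(f)$ (which is stronger than lying in the subcategory generated by $\Ind(f)$). First, $f\oplus f'$ is rigid: applying $\Hom_{K^b(\proj A)}(f,-)$ and $\Hom_{K^b(\proj A)}(-,f)$ to the triangle $f_+\to f'\to f_-^d\to f_+[1]$ and using that $f\oplus f_+$ and $f\oplus f_-$ are rigid gives $\E(f,f')=\E(f',f)=0$, while $\E(f',f')=0$ is already known. Now let $g$ be an indecomposable summand of $f'$ with $g\notin\Ind(f)$. Then $f\oplus g$ is rigid with exactly $\rank(K_0(\Rep(A)))$ pairwise nonisomorphic indecomposable summands, hence maximal rigid by Theorem \ref{MRT}, so $g$ is a complement of $f$ and therefore $g=f_+$ or $g=f_-$. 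But $f_+\mid f'$ would make $f_+\oplus f_-$ a summand of the rigid presentation $f'\oplus f_-$, forcing $\E(f_-,f_+)=0$; and $f_-\mid f'=f''$ would make $f_+\oplus f_-$ a summand of the rigid presentation $f''\oplus f_+$, again forcing $\E(f_-,f_+)=0$ — either way contradicting $\E(f_-,f_+)=d\neq 0$. Hence no such $g$ exists, i.e. $f'\in\operatorname{add}(f)$, and in particular $f'$ lies in the triangulated subcategory generated by $\Ind(f)$.

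The main obstacle is this last paragraph: one must use the collapse $f'=f''$ at $d=1$ in just the right way, so that the rigidity of $f'\oplus f_-$ and of $f''\oplus f_+$ can each be played off against the non‑vanishing $\E(f_-,f_+)=d$ to exclude $f_+$ and $f_-$, respectively, as summands of $f'$; everything else is bookkeeping with long exact sequences and Grothendieck classes.
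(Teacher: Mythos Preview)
Your proposal is correct and follows the paper's approach: as you note, the paper's proof is precisely the discussion preceding the statement, which establishes everything except the final ``in particular'' clause, and the paper gives no separate argument for that clause. Your treatment of the last sentence --- the $\delta$-vector identity $(d-1)(\delta(f_+)-\delta(f_-))=0$ for one direction, and for the other the observation that $f\oplus f'$ is rigid so any stray indecomposable summand of $f'$ would have to be a complement of $f$, hence $f_+$ or $f_-$, each excluded via the rigidity of $f'\oplus f_-=f''\oplus f_-$ and $f''\oplus f_+$ --- is a clean and complete way to fill this gap; in fact you obtain the slightly sharper conclusion $f'\in\operatorname{add}(f)$.
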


\begin{question} What is a necessary and sufficient condition for $d=1$?\end{question}

\section{The Simplicial Complexes} \label{S:SC}

Now we can attach to the algebra $A$ an abstract simplicial complex $\mc{S}(A)$ as follows. The set $\mc{S}(A)_p$ of $p$-simplexes consists of all $\{\delta_1,\dots,\delta_p\}$ such that each $\delta_i$ is indecomposable and $\e(\delta_i,\delta_j)=0$ for $i\neq j$. Let $\mc{S}^r(A)$ be the subcomplex of $\mc{S}(A)$ consisting of all the simplexes whose vertexes are all real. When $A=kQ$ is a finite-dimensional path algebra (without relation), $\mc{S}^r(A)$ is the well-known cluster complex by Lemma \ref{DC}.

In general, for any abstract simplicial complex $\mc{K}$, the {\em geometric realization} $|\mc{K}|$ of $\mc{K}$ is defined as follows. Let $\mc{K}_0$ be the set of $0$-simplexes, then $|\mc{K}|$ is the subset of the vector space $\mathbb{R}^{\mc{K}_0}$ consisting of all $x=\sum t_iv_i,t_i\in [0,1],v_i\in\mc{K}_0$ with the property that $\sum t_i=1$ and set of all vertices $v_i$ with nonzero coefficient is a simplex in $\mc{K}$. We topologize $|\mc{K}|$ by giving it the weakest topology with respect to the property that a map $\lambda:|\mc{K}|\to X$ is continuous if and only if it is continuous on every closed simplex, i.e., the set of all $x\in|\mc{K}|$ which are nonnegative linear combinations of the vertices of a simplex in $\mc{K}$.

Let $\Lambda_0:\mc{S}(A)_0\to\mathbb{R}^{|Q_0|}$ be the map assigning each vertex $\delta_i$ to itself, then $\Lambda_0$ can be piecewise-linearly extended to a continuous map $\Lambda:|\mc{S}(A)|\to\mathbb{R}^{|Q_0|}$. We define $\lambda:|\mc{S}(A)|\to S^{|Q_0|-1}$ by $\lambda(\delta)=\Lambda(\delta)/\norm{\delta}$, where $\norm{\cdot}$ is the usual Euclidean norm.

\begin{proposition} The map $\lambda$ is injective. If $\mc{S}^r(A)$ is finite, then $\mc{S}(A)=\mc{S}^r(A)$ and $\lambda$ gives a triangulation of the sphere $S^{|Q_0|-1}$.
\end{proposition}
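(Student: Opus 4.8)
The plan is to derive injectivity of $\lambda$ from the uniqueness of the canonical decomposition (Theorem~\ref{CDT} together with the Krull--Schmidt property of $\Com^l(\Rep(A))$), and to derive the triangulation statement from the completion procedure of Section~\ref{S:RP} and Proposition~\ref{+-}. The first observation is that injectivity of $\lambda$ is equivalent to the assertion that the closed cones $C(\sigma):=\sum_{\delta\in\sigma}\mathbb{R}_{\geq0}\,\delta$, indexed by the simplices $\sigma$ of $\mc{S}(A)$, form a simplicial fan in $\mathbb{R}^{|Q_0|}$: the vertices of each $\sigma$ are linearly independent, and $C(\sigma)\cap C(\tau)=C(\sigma\cap\tau)$ for all $\sigma,\tau$. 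Linear independence of the vertices of a simplex already shows $0\notin\Lambda(|\sigma|)$, so that $\lambda$ is well defined and continuous and $\Lambda$ is injective on each closed simplex; the intersection statement then upgrades this to global injectivity by the usual face-meets-relative-interior argument.

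For the fan property I would argue as follows. If $\lambda(x)=\lambda(y)$ with $x\in|\sigma|$ and $y\in|\tau|$, then $\Lambda(x)$ and $\Lambda(y)$ are positive multiples of one another; since both are rational this relation survives over $\mathbb{Q}$, and clearing denominators produces a single vector $\delta\in\mathbb{Z}^{Q_0}$ written as a positive combination $\sum_i a_i\delta_i$ of vertices of $\sigma$ and as a positive combination $\sum_j b_j\epsilon_j$ of vertices of $\tau$. Since $\e(\delta_i,\delta_{i'})=0$ for distinct vertices of a simplex and $\e$ is additive on direct sums, Theorem~\ref{CDT} identifies the union of the canonical decompositions of the blocks $a_i\delta_i$ with the canonical decomposition of $\delta$, and likewise for $\tau$. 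Uniqueness of the canonical decomposition forces these two multisets of indecomposables to coincide; reading off multiplicities --- here one uses indivisibility of the vertices, so that $a_i\delta_i$ contributes exactly $a_i$ copies of $\delta_i$ --- shows that $x$ and $y$ have the same support, which therefore lies in $\sigma\cap\tau$, and the same barycentric coordinates, so $x=y$. Piecewise-linearity of $\lambda$ is then immediate from the construction.

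Now suppose $\mc{S}^r(A)$ is finite. The maximal simplices of $\mc{S}^r(A)$ are exactly the sets $\Ind(f)$ with $f$ maximal rigid: any real simplex $\{\delta_1,\dots,\delta_p\}$ is realized by the rigid presentation $\bigoplus_i f_i$, where $f_i$ is the (dense-orbit, hence general) rigid presentation in $\PHom_A(\delta_i)$ and the off-diagonal terms $\E(f_i,f_j)$ vanish because a general pair in $\PHom_A(\delta_i)\times\PHom_A(\delta_j)$ realizes $\e(\delta_i,\delta_j)=0$; and by Theorem~\ref{MRT} each such simplex has $|Q_0|$ vertices, so the chambers $C(\Ind(f))$ are full-dimensional simplicial cones. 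By Proposition~\ref{+-}, every codimension-one face of such a chamber (an almost-maximal rigid presentation) is a face of exactly two chambers, namely its positive and negative completions. A standard argument then shows the resulting finite simplicial fan is complete: the union $U$ of the chambers is a closed cone through the origin, the two-chambers-per-wall property puts every relative-interior point of a codimension-one face into the interior of $U$, and passing to the quotient by the span of a codimension-two face and inducting on codimension reduces the remaining faces to the fact that finitely many planar cones, each of whose rays is shared by exactly two of them, must tile $\mathbb{R}^2$; hence $\partial U=\emptyset$ and $U=\mathbb{R}^{|Q_0|}$. Consequently $\lambda$ restricted to $|\mc{S}^r(A)|$ is a continuous injection from a compact space onto all of $S^{|Q_0|-1}$, hence a homeomorphism, and being linear on simplices it triangulates the sphere. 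Finally, since $\mc{S}^r(A)$ is the full subcomplex of $\mc{S}(A)$ on the real vertices and its realization has the same (full) image under the injective map $\lambda$ as $|\mc{S}(A)|$, the two realizations coincide, which forces $\mc{S}(A)=\mc{S}^r(A)$.

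The step I expect to be the main obstacle is the completeness of the chamber fan: deducing from ``every wall is shared by exactly two chambers'' together with finiteness that the chambers cover all of $\mathbb{R}^{|Q_0|}$ (equivalently, that $\mc{S}(A)=\mc{S}^r(A)$). Proposition~\ref{+-} gives control only in codimension one, so one must either iterate the quotient-fan reduction along faces of codimension $\geq 2$ or invoke the combinatorial statement that a finite, pure, top-dimensional cone complex with exactly two chambers along each wall has empty boundary, and one also needs the exchange (mutation) graph to be connected so that the clopen dichotomy for $U\subseteq\mathbb{R}^{|Q_0|}$ applies. A secondary point requiring care is the indivisibility of the vertices entering the injectivity argument, since this is precisely what guarantees that $m\delta$ refines to $m$ copies of $\delta$ in the canonical decomposition rather than to something coarser.
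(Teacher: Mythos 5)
Your argument is essentially the paper's: injectivity is deduced from the uniqueness of the canonical decomposition (Theorem \ref{CDT}), and surjectivity of $\lambda$ on $|\mc{S}^r(A)|$ is deduced from the two combinatorial facts that every rigid presentation completes to a maximal rigid one with $|Q_0|$ summands (the Bongartz-type completion of Section \ref{S:RP} together with Theorem \ref{MRT}) and that every almost maximal rigid presentation has exactly two complements (Proposition \ref{+-}). The only genuine divergence is how you pass from ``every cone extends to a chamber and every wall bounds exactly two chambers'' to completeness of the fan: the paper isolates this as a stand-alone lemma about closed fans in $\mathbb{R}^n$ and proves it by a path argument (choose $q$ in a chamber avoiding the measure-zero union of the spans of $p$ with the codimension-two faces, and analyze the first time the segment from $p$ to $q$ meets the fan), whereas you sketch a clopen/star-fan induction on codimension reducing to the planar case. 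Both work; the paper's path argument avoids having to check that star fans inherit the two hypotheses, while your induction is perhaps more familiar from toric geometry. Two of your side remarks deserve correction. First, the connectivity you worry about is not that of the exchange graph: in either argument one only uses connectivity of $\mathbb{R}^{|Q_0|}$ (or of its complement of a codimension-$\geq 2$ set), exactly as in the paper's lemma, so no mutation-connectedness input is needed. Second, in the injectivity step the sentence ``since both are rational'' is not literally correct, because the barycentric coordinates of $x$ and $y$ are arbitrary reals; the repair is the standard one (and is what the paper tacitly does): the cones $C(\sigma)$, $C(\tau)$ are rational simplicial cones, so a failure of injectivity --- either a linear dependence among the vertices of one simplex or an intersection $C(\sigma)\cap C(\tau)$ not contained in $C(\sigma\cap\tau)$ --- already occurs at a rational, hence after scaling integral, vector $\delta\in\mathbb{Z}^{Q_0}$, which would then admit two distinct canonical decompositions by Theorem \ref{CDT}, a contradiction; here the indivisibility of real and tame vertices, which you rightly flag, is supplied by the remark following Theorem \ref{CDT}.
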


\begin{proof} Suppose that $\lambda$ is not injective and the fibre over $\varepsilon$ has more than one point, then some integral multiple $k\varepsilon\in\mathbb{Z}^n$ would have two different canonical decompositions by Theorem \ref{CDT}. This contradicts the uniqueness of the canonical decomposition.

When $\mc{S}^r(A)$ is finite, we define $\lambda^r:|\mc{S}^r(A)|\to S^{|Q_0|}$ exactly the same way as $\lambda$. By the construction in Section \ref{S:RP} and Proposition \ref{+-}, the fan corresponding to the simplicial complex $\lambda^r|\mc{S}^r(A)|$ has the two properties of the following lemma. Our claim follows.
\end{proof}

\begin{lemma}
Suppose that
${\mathcal F}\subseteq \R^n$ is a closed fan. Let ${\mathcal C}_i$ be the set of $i$-dimensional simplicial cones in the fan.
Assume that \begin{enumerate}
\item every element $F$ in ${\mathcal C}_i$ with $i<n$ is the face of a simplicial cone in ${\mathcal C}_n$.
\item every element $F$ in ${\mathcal C}_{n-1}$ is the face of two distinct simplicial cones in ${\mathcal C}_n$.
\end{enumerate} Then the fan covers $\R^n$.
\end{lemma}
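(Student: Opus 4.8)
The plan is to show directly that $U:=\bigcup_{C\in{\mathcal C}_n}C$, the union of the maximal cones, is all of $\R^n$; since $|{\mathcal F}|\supseteq U$ trivially, this gives the claim. By hypothesis (1) the fan has at least one $n$-dimensional cone (so ${\mathcal C}_n\neq\emptyset$), whence $U$ is a nonempty \emph{closed} cone — closed because ${\mathcal F}$ is a closed (hence locally finite) fan — and $U$ has nonempty interior since its members are full-dimensional. The cases $n\le 1$ being immediate, assume $n\ge 2$ and suppose for contradiction that $U\neq\R^n$. Then $\operatorname{int}(U)$ and $\R^n\setminus U$ are both nonempty open sets and $\R^n=\operatorname{int}(U)\sqcup\partial U\sqcup(\R^n\setminus U)$.

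The key step is to bound the topological boundary $\partial U$. Because each $C\in{\mathcal C}_n$ is full-dimensional, its topological boundary equals its relative boundary, a finite union of proper faces; hence $\partial U\subseteq\bigcup_{C\in{\mathcal C}_n}(\text{proper faces of }C)$, and each proper face is either a facet — which lies in ${\mathcal C}_{n-1}$ by the face axiom for fans — or has dimension $\le n-2$. I claim that no point $x$ in the relative interior of a facet $F$ of a maximal cone can lie on $\partial U$. Indeed, by (2) there are two distinct $C_1,C_2\in{\mathcal C}_n$ having $F$ as a face; write $H$ for the hyperplane spanned by $F$. Since $F$ is a facet of $C_i$ and $x$ lies in its relative interior, only the single defining inequality of the simplicial cone $C_i$ that cuts out $F$ is active near $x$, so a sufficiently small ball $B$ about $x$ meets $C_i$ in exactly one of the two closed half-spaces bounded by $H$. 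If $C_1$ and $C_2$ selected the same half-space, then $C_1\cap C_2$ would contain an $n$-dimensional subset of $B$; but $C_1\cap C_2$ is a common face of two distinct full-dimensional cones, hence has dimension $\le n-1$, a contradiction. So $C_1$ and $C_2$ sit on opposite sides of $H$ and $C_1\cup C_2\supseteq B$, giving $x\in\operatorname{int}(U)$. Consequently $\partial U$ is contained in the union of the relative boundaries of the facets of the maximal cones together with their faces of dimension $\le n-2$ — that is, in a finite union of convex cones of dimension $\le n-2$.

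To finish, I would invoke the standard general-position fact that a finite union $S$ of convex sets of dimension $\le n-2$ has connected complement in $\R^n$: any two points of $\R^n\setminus S$ may be joined by a polygonal path that is transverse to, and therefore disjoint from, each of the finitely many affine spans involved. Applying this to $S=\partial U$ shows that $\R^n\setminus\partial U=\operatorname{int}(U)\sqcup(\R^n\setminus U)$ is connected, yet it is exhibited as a disjoint union of two nonempty open sets — contradiction. Hence $U=\R^n$. The only genuinely delicate points are the local half-space description of a simplicial $n$-cone near the relative interior of one of its facets (used to force $C_1,C_2$ onto opposite sides of $H$) and the codimension-$\ge 2$ connectedness statement; I expect the former to be the main thing to write out carefully, the rest being routine.
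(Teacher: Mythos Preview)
Your connectedness argument is a valid alternative route, but it differs from the paper's proof and has one genuine gap. The paper instead picks a point $p\notin|\mathcal F|$ and a generic point $q$ in the interior of some $n$-cone, chosen off the (countable) union of hyperplanes $\overline{F}=\operatorname{span}(F,p)$ for $F\in\mathcal C_{n-2}$; walking along the segment $\gamma(t)=(1-t)p+tq$, the first entry point $\gamma(t_0)$ into $|\mathcal F|$ must, by genericity of $q$, land in the relative interior of a facet, and then hypothesis~(2) forces $\gamma(t_0-\varepsilon)\in|\mathcal F|$, a contradiction. Your approach reaches the same contradiction by showing $\partial U$ sits in the $(n-2)$-skeleton and then invoking connectedness of its complement---arguably cleaner conceptually, and it isolates the role of~(2) neatly. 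Two points need repair, however. First, ``closed fan'' here means $|\mathcal F|$ is a closed subset of $\R^n$, not local finiteness; your conclusion that $U$ is closed is still correct, but for a different reason: hypothesis~(1) gives $U=|\mathcal F|$. Second, and more importantly, the paper explicitly allows $\mathcal C_n$ to be countably infinite (its proof begins by observing this), so your ``finite union of convex cones of dimension $\le n-2$'' is in general only countable, and the polygonal-path transversality you cite for finite arrangements does not apply as stated; you need a measure-zero or Baire-category argument instead---essentially the same input the paper uses when it says ``$D$ has zero measure''. With those two fixes your proof goes through.
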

\begin{proof}
Since every cone $C\in {\mathcal F}_n$ contains a rational point, ${\mathcal F}_n$ is countable. By property (1), ${\mathcal F}_i$ is countable for all $i$.
Suppose that $p\in \R^n$ does not lie in the fan. Let $C\in {\mathcal C}_n$. For every element $F\in {\mathcal C}_{n-2}$, let $\overline{F}$ be the span of $F$ and $p$.
Let $D$ be the union of all $\overline{F}$, with $F\in {\mathcal F}_{n-2}$. Then $D$ is a union of simplicial cones of dimension $\leq n-1$.
Since $D$ has zero measure, it cannot contain $C$. Choose $q\in C\setminus D$. Define $\gamma:[0,1]\to \R^n$ by $\gamma(t)=(1-t)p+tq$. Then $\gamma$ is a continuous path from $p$ to $q$.
We have $\gamma^{-1}({\mathcal F})$ is closed, hence compact. Therefore $\gamma^{-1}({\mathcal F})$ has a smallest element, say $t_0$. Now $\gamma(t_0)$ lies in $C$ for some $C\in {\mathcal C}_i$.
By property (1), we know that $\gamma(t_0)$ lies in $C$ for some $C\in {\mathcal C}_{n}$. Now $\gamma(t_0)$ cannot lie in the interior of $C$ because then $\gamma(t_0-\varepsilon)\in C$ for some small $\varepsilon>0$. Therefore, $\gamma(t_0)$ lies in some $n-1$-dimensional facet $F$ of $C$. If $\gamma(t_0)$ does not lie in the relative interior of $F$, then $\gamma(t_0)$ lies in $F'$ for some $n-2$ dimensional face of $F$. But from $(1-t_0)p+t_0q\in F'$ follows that $q\in \overline{F'}$, which is a contradiction.
Therefore, $\gamma(t_0)$ lies in the relative interior of $F$. Besides $C$ there must be another $n$-dimensional simplicial cone of ${\mathcal F}$ such that $F$ is a facet of $C'$.But then $\gamma(t_0)$ lies in the interior of $C\cup C'$, and $\gamma(t_0-\varepsilon)\in {\mathcal F}$ for some small $\varepsilon>0$. Contradiction.
Therefore, it is not possible to choose $p$ outside the fan, so the fan covers $\R^n$.
\end{proof}

\begin{question}
If the algebra $A$ is of finite type, then $\mc{S}^r(A)$ is finite. Is it true for the converse?
\end{question}

A similar simplicial complex for quivers was studied in \cite{DW}. It was proved in \cite[Theorem 7.1]{HU} that $\mc{S}^r(kQ)$ is always connected. In general, $\mc{S}(A)$ or $\mc{S}^r(A)$ may be disconnected.

\begin{example} Consider the $3$-arrow Yin-Yang quiver $Q$:
$$\yinyangkronecker{u}{v}{_{\times 3}}{_{\times 3}}$$ with three arrows in each direction.
Let $A$ be the algebra of $kQ$ modulo the relations generated by all paths of length 2. A simplicial complex governing the canonical decomposition of general representations of the generalized Kronecker quiver $\Theta:u\xrightarrow{\times 3}v$ was shown in \cite[p. 249]{DW}. It is not hard to see that the simplicial complex is the part of $\mc{S}(A)$ in the 1st and 4th quadrants as shown in the figure and $\mc{S}(A)$ is symmetric about the origin.

\ \centerline{\includegraphics[scale=0.6]{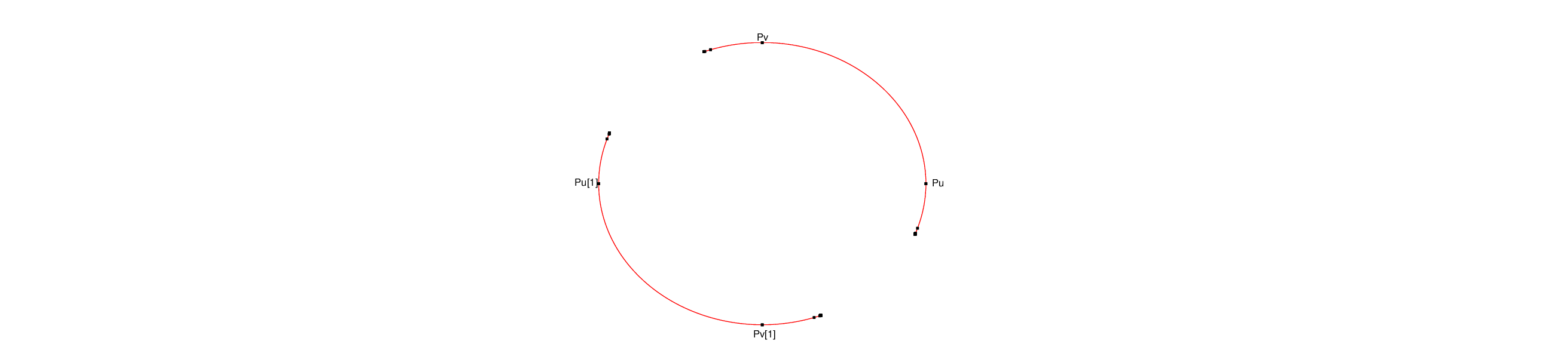}}

The missing part on the circle contains the wild $\delta$-vectors. Note that the algebra $A$ has infinite global dimension.
\end{example}

\begin{question}
Is $\mc{S}(A)$ or $\mc{S}^r(A)$ always connected for connected algebras of finite global dimension?
\end{question}

\begin{example} Consider the quiver $Q$
$$\productthreevertexcomplex{x}{y}{z}{a_1}{a_2}{b_1}{b_2}$$ with relation $b_1a_1=b_2a_2=0$. The algebra $A=kQ/I$ is a string algebra of global dimension 2 with the Cartan matrix $C=\left( \begin{smallmatrix}1&2&2\\0&1&2\\0&0&1\end{smallmatrix} \right)$, i.e., the matrix whose rows are the dimension vectors of the indecomposable projective representations. The general representations of $A$ in irreducible components were studied in \cite{KW}. It is known that there are only three possible cases for the dimension vector $\alpha$ of a generally indecomposable representation. \begin{enumerate}
\item{} $\alpha_-(m,n)=(m-1,m+n-1,n-1),\text{ for } m\geqslant 1,n\geqslant 1;$
\item{} $\alpha_+(m,n)=(m+1,m+n+1,n+1),\text{ for } m\geqslant -1,n\geqslant -1,m+n\geqslant -1;$
\item{} $\alpha_0(m,n)=(m,m+n,n), \text{ for } m\geqslant 0, n\geqslant 0 \text{ with GCD}(m,n)=1$.\end{enumerate}
An easy rank argument can show that all these representations have projective dimension less than 1, so $\delta$-vectors of their minimal presentations are determined by the Cartan matrix. In fact, all the indecomposable $\delta$-vectors are the following \begin{enumerate}
\item{} $\delta_-(m,n)=(m-1,n-m+1,\minus(n+1)),\text{ for } m\geqslant 1,n\geqslant 1,\text{ or }=\minus 1\\ \text{and } (m,n)=(0,\minus 1),(1,0);$
\item{} $\delta_+(m,n)=(m+1,n-m-1,\minus(n-1)),\text{ for } m\geqslant -1,n\geqslant -1,m+n\geqslant -1;$
\item{} $\delta_0(m,n)=(m,n-m,\minus n),\text{ for } m,n\geqslant 0\text{ with GCD}(m,n)=1.$
\end{enumerate}
Except in case (1) when $n=\minus 1$ or $(m,n)=(1,0)$, all the general presentations are injective and the dimensions of the cokernels match with the first list.

Then using Lemma \ref{DC}, it is ready to check that (1),(2) are real $\delta$-vectors and (3) are tame ones. For $m\geqslant 1$ and $n\geqslant 1$, $\tau$ takes the rigid presentation in $\PHom_A(\delta_-(m,n))$ to the rigid presentation in $\PHom_A(\delta_+(m,n))$ and vice versa; and takes a general presentation in $\PHom_A(\delta_0(m,n))$ to a general one in the same space. One can check that the above transformation of the $\delta$-vectors are given by $\minus C^{\T}C^{-1}$. In case (1) when $n=\minus 1$, $\tau$ takes the rigid presentation in $\PHom_A(\delta_-(m,\minus 1))$ to the rigid one in $\PHom_A(\delta_-(m+2,\minus 1))$; in case (2) when $m=\minus 1$, $\tau$ takes the rigid presentation in $\PHom_A(\delta_+(\minus 1,n))$ to the rigid one in $\PHom_A(\delta_+(\minus 1,n-2))$; in case (3) when $mn=0$, $\tau$ takes a general presentation in $\PHom_A(\delta_0(0,n))$ or $\PHom_A(\delta_0(m,0))$ to a general one in the same space. There is one singular case when the rigid presentation is $P_z[1]$. $\tau P_z[1]=I_z$ has the minimal presentation $P_z\oplus P_y^2\to P_x^2$, but this presentation is not general in the corresponding space $\PHom_A((2,\minus 2,\minus 1))$. In fact, its $\delta$-vector decomposes as $(2,\minus 2,\minus 1)=2\cdot(1,\minus 1,0)\oplus(0,0,\minus 1)$. We will see in the next section that this can never happen for the Jacobian algebra of a quiver with potential.

The stereographic projection of $\lambda(\mc{S}(A))$ from the point $(\frac{1}{\sqrt{2}},0,\minus\frac{1}{\sqrt{2}})$ looks like
\begin{center}
\includegraphics[width=5in]{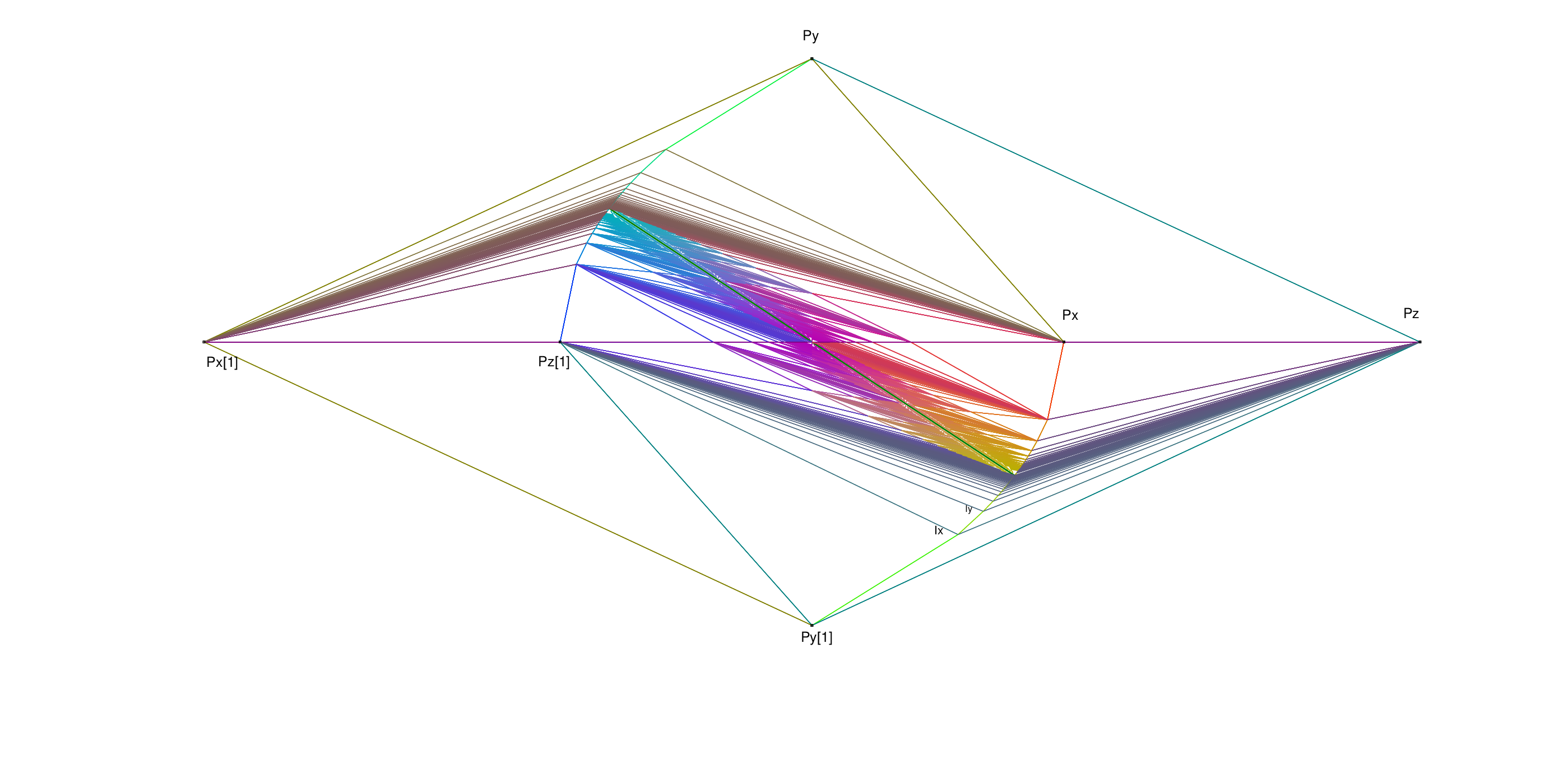}
\end{center}

The dark green river in the middle consists of all the tame $\delta$-vectors. The upper (resp. lower) light green stream is formed by real $\delta$-vectors not involving $P_x$ (resp. $P_z$). Roughly speaking, $\tau$ fixes the river, makes the stream flow, and takes the rest to the other bank. We see that both $\lambda(\mc{S}(A))$ and $\lambda(\mc{S}^r(A))$ are connected and the latter is contractible. For a more detailed account of this example see \cite[5.2]{F}.
\end{example}

\section{Application to Quivers with Potentials} \label{sec:QP}

Recall that a {\em decorated representation} of $A$ is a pair $\mc{M}=(M,V)$, where $M\in\Rep(A)$ is the positive part of $\mc{M}$ and $V\in\Rep(R)$ is the negative part of $\mc{M}$. $\mc{M}$ is called positive (resp. negative) if $V=0$ (resp. $M=0$). Let $\mc{R}ep(A)$ be the set of decorated representations of $A$ up to isomorphism. There is a bijection between $\mc{R}ep(A)$ and $K^2(\proj A)$ mapping any positive representation $M$ to its minimal presentation in $\Rep(A)$, and simple negative representation $\mc{S}_v^-$ to $P_v\to 0$. So the AR-transformation $\tau$ and its inverse are defined in $\mc{R}ep(A)$. They clearly commute with the trivial dual $\mc{M}^*:=\Hom(\mc{M},k)$. For any decorated representation $\mc{M}$, we will write $f(\mc{M})$ for its image in $K^2(\proj A)$, and denote its $\delta$-vector and $i$-th Betti vector by $\delta(\mc{M})$ and $\beta_i(\mc{M})$. Then we have that $\delta(\mc{M})=\beta_0(\mc{M})-\beta_1(\mc{M}),\beta_0(\mc{M})=\beta_0(M)$, and $\beta_1(\mc{M})=\beta_1(M)+\dv{V}$. Note that $\beta_i(M)$ agree with the classical Betti vectors for representations. Moreover, it is clear that $\beta_0(\tau\mc{M}^*)=\beta_1(\mc{M})$ and $\beta_1(\tau\mc{M}^*)=\beta_0(\mc{M})$.

We follow \cite{DWZ2} and define
\begin{definition}
$\E_A^{\prj}(\mc{M},\mc{N})=\E(f(\mc{M}),f(\mc{N}))$ and $\E_A^{\inj}(\mc{M},\mc{N})=\E_{A^{\op}}^{\prj}(\mc{N}^*,\mc{M}^*)$. $\mc{M}$ is called rigid if $f(\mc{M})$ is. We denote $\Hom_A^+(\mc{M},\mc{N}):=\Hom_A(M,N)$.
\end{definition}

From now on, we fix a quiver $Q$ without oriented 2-cycles and a potential $S$ on $Q$. Recall that a {\em potential} $S$ is an element in $\widehat{kQ}/[\widehat{kQ},\widehat{kQ}]$, where $\widehat{kQ}$ is the completion of the path algebra $kQ$. The vector space $\widehat{kQ}/[\widehat{kQ},\widehat{kQ}]$ has as basis the set of oriented cycles up to cyclic permutation. For each arrow $a\in Q_1$, the {\em cyclic derivative} $\partial_a$ on $\widehat{kQ}$ is defined to be $$\partial_a(a_1\cdots a_s)=\sum_{k=1}^{d}a^*(a_k)a_{k+1}\cdots a_sa_1\cdots a_{k-1}.$$ For each potential $S$, its {\em Jacobian ideal} $\partial S$ is the (two-sided) ideal in $\widehat{kQ}$ generated by all $\partial_aS$. Let $A=\widehat{kQ}/\partial S$ be the {\em Jacobian algebra}, and assume it is finite dimensional, in which case completion is unnecessary.
The key notion in \cite{DWZ1} is the definition of mutation $\mu_v$ of the Jacobian algebra $A$ and its decorated representations $\mc{M}$ at some vertex $v\in Q_0$. In fact, the mutation is defined at the level of quivers with potentials, but we do not need this. We refer the readers to that paper for details. In the general context of quivers with relations, the maps $\alpha$ and $\gamma$ involved in the definition of the mutation are exactly $\varphi$ and $\psi$ in \eqref{alphagamma}. So the h-vector and g-vector of $\mc{M}$ defined in \cite[(3.2) and (1.13)]{DWZ2} are nothing but $\minus\beta_0(\mc{M}^*)$ and $\minus\delta(\mc{M}^*)$. One important feature of the mutation is that it is involutive. Moreover, one can check directly from the definition that it commutes with the trivial dual.

\begin{lemma}\label{QPL1} {\em \cite[Lemma 5.2, Proposition 6.1, and Theorem 7.1]{DWZ2}}\begin{enumerate}
\item{} $\beta_0(\mu_v\mc{M})_v=\beta_1(\mc{M})_v$, so $\beta_1(\mu_v\mc{M})_v=\beta_0(\mc{M})_v$ and $\delta(\mu_v\mc{M})_v=\minus\delta(\mc{M})_v$.
\item{} $\hom_{\mu_vA}^+(\mu_v\mc{M},\mu_v\mc{N})-\hom_A^+(\mc{M},\mc{N})=\beta_1(\mc{M})_v\beta_1(\mc{N}^*)_v-\beta_0(\mc{M})_v\beta_0(\mc{N}^*)_v$.
\item{} $\e_{\mu_vA}^{\prj}(\mu_v\mc{M},\mu_v\mc{N})-\e_A^{\prj}(\mc{M},\mc{N})=\beta_0(\mc{M})_v\beta_1(\mc{N})_v-\beta_1(\mc{M})_v\beta_0(\mc{N})_v$, and\\ $\e_{\mu_vA}^{\inj}(\mu_v\mc{M},\mu_v\mc{N})-\e_A^{\inj}(\mc{M},\mc{N})=\beta_1(\mc{M}^*)_v\beta_0(\mc{N}^*)_v-\beta_0(\mc{M}^*)_v\beta_1(\mc{N}^*)_v$.
\end{enumerate}\end{lemma}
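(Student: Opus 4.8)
The plan is to obtain all three formulas by translating \cite[Lemma 5.2, Proposition 6.1, Theorem 7.1]{DWZ2} through the dictionary assembled just above. First I would spell out that dictionary precisely: $\E_A^{\prj}(\mc M,\mc N)$ is the space $\E(\mc M,\mc N)$ of \cite{DWZ2}; the $h$-vector and $g$-vector of $\mc M$ equal $\minus\beta_0(\mc M^*)$ and $\minus\delta(\mc M^*)$; the maps called $\alpha,\gamma$ in the local datum defining the mutation $\mu_v$ are the maps $\varphi,\psi$ of \eqref{alphagamma}; $\hom_A^+(\mc M,\mc N)=\hom_A(M,N)$ is the $\Hom$ of underlying representations used in \cite{DWZ2}; and the negative part of $\mc M$ enters only through $\beta_1(\mc M)=\beta_1(M)+\dv V$. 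Once these identifications are in place, (1) is read off from the construction of $\mu_v$, (2) is a transcription of \cite[Proposition 6.1]{DWZ2}, and (3) of \cite[Theorem 7.1]{DWZ2}.

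For (1), I would use that $\mu_v$ alters the representation only at the vertex $v$, and that by \cite[Lemma 5.2]{DWZ2} the new space at $v$ is constructed so that the cokernel of the map playing the role of $\varphi$ at $v$ has dimension $\beta_1(\mc M)_v$. Since $\beta_0(-)_v$ is exactly that cokernel dimension by \eqref{alphagamma}, this gives $\beta_0(\mu_v\mc M)_v=\beta_1(\mc M)_v$. Applying this identity to $\mu_v\mc M$ in place of $\mc M$ and invoking the involutivity of $\mu_v$ yields $\beta_1(\mu_v\mc M)_v=\beta_0(\mc M)_v$, whence $\delta(\mu_v\mc M)_v=\beta_0(\mu_v\mc M)_v-\beta_1(\mu_v\mc M)_v=\minus\delta(\mc M)_v$.

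For (2) and (3), I would take the right-hand sides of \cite[Proposition 6.1 and Theorem 7.1]{DWZ2}, which are quadratic expressions in the $g$- and $h$-vectors at $v$, and rewrite them with $\minus\beta_0(\mc M^*)$ substituted for the $h$-vector, using the relations $\beta_0(\tau\mc M^*)=\beta_1(\mc M)$, $\beta_1(\tau\mc M^*)=\beta_0(\mc M)$, $\mc M^{**}=\mc M$, and the fact that $\tau$ and $\mu_v$ commute with the trivial dual; this converts those expressions into the bilinear forms in the Betti vectors displayed in the lemma. The second identity in (3) would then be deduced from its $\prj$-counterpart applied to $A^{\op}$ and the pair $(\mc N^*,\mc M^*)$, via the definition $\E_A^{\inj}(\mc M,\mc N)=\E_{A^{\op}}^{\prj}(\mc N^*,\mc M^*)$ and the compatibility of $\mu_v$ with the trivial dual.

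The hard part is not any single identity but the sign-and-duality bookkeeping: I would need to check carefully that the $g$- and $h$-vectors of \cite{DWZ2} correspond to our $\delta$- and $\beta_0$-vectors up to exactly the expected sign and trivial dual, and that the decorated structure is charged to the $P_1$-side (through $\dv V$) in the same way as in \cite{DWZ2}. Once that translation is pinned down, the three statements follow with no further computation.
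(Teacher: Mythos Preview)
Your proposal is correct and is essentially the same approach as the paper's: the paper does not prove this lemma at all but simply cites \cite[Lemma 5.2, Proposition 6.1, and Theorem 7.1]{DWZ2}, relying on the dictionary set up in the preceding paragraph (that the $h$- and $g$-vectors of \cite{DWZ2} are $-\beta_0(\mc{M}^*)$ and $-\delta(\mc{M}^*)$, and that the maps $\alpha,\gamma$ there are the $\varphi,\psi$ of \eqref{alphagamma}). Your write-up just makes that translation explicit, which is exactly what the bare citation is asking the reader to do.
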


\begin{corollary} $\e_A^{\prj}(\mc{M},\mc{M})$ and $\e_A^{\inj}(\mc{M},\mc{M})$ are mutation invariant, so the simplicial complexes $\mc{S}^r(A)$ and $\mc{S}^r(\mu_vA)$ are isomorphic.
\end{corollary}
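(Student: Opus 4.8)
The plan is to extract both parts from Lemma~\ref{QPL1}(3), using only the formal properties of the mutation $\mu_v$ already available: it is an involution on decorated representations, it respects direct sums, and it commutes with the trivial dual.

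First, the mutation invariance. Setting $\mc{N}=\mc{M}$ in the first identity of Lemma~\ref{QPL1}(3) makes the right-hand side $\beta_0(\mc{M})_v\beta_1(\mc{M})_v-\beta_1(\mc{M})_v\beta_0(\mc{M})_v=0$, so $\e_{\mu_vA}^{\prj}(\mu_v\mc{M},\mu_v\mc{M})=\e_A^{\prj}(\mc{M},\mc{M})$; the same substitution in the second identity (or the first identity applied to $A^{\op}$, via $\E_A^{\inj}(\mc{M},\mc{M})=\E_{A^{\op}}^{\prj}(\mc{M}^*,\mc{M}^*)$ and compatibility of $\mu_v$ with $*$) gives the invariance of $\e_A^{\inj}(\mc{M},\mc{M})$. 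Since $\mc{M}$ is rigid exactly when $\e_A^{\prj}(\mc{M},\mc{M})=0$, the involution $\mu_v$ therefore restricts to a bijection between the rigid decorated representations of $A$ and those of $\mu_vA$, and, because it respects direct sums, to a bijection $\phi_v$ between the rigid \emph{indecomposable} decorated representations.

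Next, the identification of the vertex sets. For a real $\delta$ the space $\PHom_A(\delta)$ is a vector space, hence irreducible, and it contains a rigid presentation; rigidity is the open condition $\E(f,f)=0$, and an irreducible variety has at most one dense orbit, so this rigid presentation is unique up to isomorphism, is indecomposable by the definition of ``real'', and is the minimal presentation of a unique rigid indecomposable decorated representation $\mc{M}_\delta$. (Conversely, a rigid $\mc{M}$ has its minimal presentation already in the reduced space $\PHom_A(\delta(\mc{M}))$: if $\beta_0(\mc{M})$ and $\beta_1(\mc{M})$ were both supported at some vertex $v$, then the composite of a splitting $P_1\twoheadrightarrow P_v$ with an inclusion $P_v\hookrightarrow P_0$ would represent a nonzero class in $\E(f(\mc{M}),f(\mc{M}))$, as $f(\mc{M})$ has image in the radical of $P_0$ while that composite does not.) Thus $\delta\mapsto\mc{M}_\delta$ identifies $\mc{S}^r(A)_0$ with the rigid indecomposable decorated representations of $A$, and composing with $\phi_v$ yields a bijection $\phi_v\colon\mc{S}^r(A)_0\to\mc{S}^r(\mu_vA)_0$.

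Finally, that $\phi_v$ is simplicial both ways. For real $\delta_1,\dots,\delta_p$ with associated rigid representations $\mc{M}_1,\dots,\mc{M}_p$ we have $\e(\delta_i,\delta_j)=\e_A^{\prj}(\mc{M}_i,\mc{M}_j)$ for $i\neq j$, since the rigid presentation is the general element of its reduced space and $\e(-,-)$ is upper semicontinuous on the product. Both summands being nonnegative integers, $\{\delta_1,\dots,\delta_p\}\in\mc{S}^r(A)$ iff $\e_A^{\prj}(\mc{M}_i,\mc{M}_j)+\e_A^{\prj}(\mc{M}_j,\mc{M}_i)=0$ for all $i<j$; and adding the first identity of Lemma~\ref{QPL1}(3) to itself with $\mc{M},\mc{N}$ interchanged, the correction terms cancel, so this symmetric sum is mutation invariant. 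Hence $\{\delta_1,\dots,\delta_p\}\in\mc{S}^r(A)$ iff $\{\phi_v\delta_1,\dots,\phi_v\delta_p\}\in\mc{S}^r(\mu_vA)$, which is the asserted isomorphism. I expect the main obstacle to be bookkeeping rather than any genuine difficulty: getting the vertex identification exactly right (uniqueness of the rigid presentation in a reduced space, and the fact that a rigid object's minimal presentation is already reduced), and remembering that the $i\neq j$ condition must be symmetrized before Lemma~\ref{QPL1}(3) can be applied, since the correction term there does not vanish term by term; the properties of $\mu_v$ invoked are those of \cite{DWZ1,DWZ2}.
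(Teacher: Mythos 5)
Your proposal is correct and follows essentially the route the paper intends: the corollary is deduced directly from Lemma \ref{QPL1}(3) by taking $\mc{N}=\mc{M}$ (so the correction term vanishes), and your symmetrized pairwise identity is just this diagonal invariance applied to $\mc{M}_i\oplus\mc{M}_j$, mutation being an additive involution on decorated representations. The extra bookkeeping you supply -- uniqueness of the rigid presentation in a reduced space via its open dense orbit, and the radical argument showing a rigid object's minimal presentation is already reduced -- is exactly what the paper leaves implicit, and it is sound.
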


\begin{question} Does $\mu_v$ send a general presentation in $\PHom_A(\delta)$ to a general presentation in $\PHom_A(\delta')$ for some $\delta'$? \end{question}
\noindent By the proof of \cite[Theorem 1.7]{DWZ2} and \cite[Lemma 5.2]{DWZ2}, in fact we know that $\delta'$ should be given by \cite[(1.3)]{DWZ2}.

\begin{lemma} \label{QPL2} {\em \cite[Corollary 10.8 and Proposition 7.3]{DWZ2}} \begin{enumerate}
\item{} $\E_A^{\prj}(\mc{M},\mc{N})=\Hom_A^+(\mc{N},\tau\mc{M})^*\text{ and }\E_A^{\inj}(\mc{M},\mc{N})=\Hom_A^+(\tau^{\minus 1}\mc{N},\mc{M})^*.$
\item{} $\E_A^{\prj}(\mc{M},\mc{M})=\E_A^{\inj}(\mc{M},\mc{M})$.
\end{enumerate}\end{lemma}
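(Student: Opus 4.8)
The plan is to prove (1) first and then read off (2). Write $\E_A^{\prj}(\mc{M},\mc{N})=\Hom_{K^b(\proj A)}(f(\mc{M})[\minus 1],f(\mc{N}))$. Since $\E$, $\Hom_A^{+}$ and $\tau$ are additive in each variable, it is enough to treat the indecomposable summands of $\mc{M}=(M,V)$ one at a time, so we split $\mc{M}$ into its positive part $(M,0)$ and its negative part $(0,V)$. The negative part corresponds to $\bigoplus_{v}(P_v[1])^{V(v)}$, and for any $g\in K^b(\proj A)$ with $\Coker g=N$ one has
\[
\E(P_v[1],g)=\Hom_{K^b(\proj A)}(P_v,g)=\Coker\!\big(\Hom_A(P_v,g^{\minus 1})\to\Hom_A(P_v,g^{0})\big)=\Hom_A(P_v,\Coker g)=N(v),
\]
because $\Hom_A(P_v,\minus)$ is exact. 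On the other side $\tau(0,V)=\bigoplus_v(\text{minimal presentation of }I_v)^{V(v)}$ has positive part $\bigoplus_v I_v^{V(v)}$, so $\Hom_A^{+}(\mc{N},\tau(0,V))^{*}\cong\Hom_A\big(N,\textstyle\bigoplus_v I_v^{V(v)}\big)^{*}\cong\bigoplus_v N(v)^{V(v)}$ (using $\Hom_A(N,I_v)\cong D(N(v))$). The two sides agree on the negative part, so (1) reduces to the case $\mc{M}=(M,0)$, and there --- writing $N$ for the positive part of $\mc{N}$, i.e.\ $N=\Coker f(\mc{N})$ --- to the single identity $\E\big(f(M),f(\mc{N})\big)\cong\Hom_A(N,\tau M)^{*}$, with $\tau M$ the classical Auslander--Reiten translate (it kills the projective summands of $M$, matching $\E(A,\minus)=0$).

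For that identity, Lemma \ref{DC} gives $\E\big(f(M),f(\mc{N})\big)\cong\Coker\big(\Hom_A(P_0,N)\xrightarrow{f^{*}}\Hom_A(P_1,N)\big)$, where $P_1\xrightarrow{f}P_0\to M\to 0$ is the minimal presentation. As the $P_i$ are finitely generated projective, $\Hom_A(P_i,N)\cong\Hom_A(P_i,A)\otimes_A N$ compatibly with $f^{*}$, so this cokernel is $\operatorname{Tr}(M)\otimes_A N$, where $\operatorname{Tr}(M)=\Coker\big(\Hom_A(P_0,A)\xrightarrow{f^{*}}\Hom_A(P_1,A)\big)$ is the transpose right $A$-module. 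By definition $\tau M=D\operatorname{Tr}(M)$, and the $k$-linear tensor--hom adjunction gives $X\otimes_A N\cong D\Hom_A(N,DX)$ for every right $A$-module $X$; taking $X=\operatorname{Tr}(M)$ yields $\operatorname{Tr}(M)\otimes_A N\cong D\Hom_A(N,\tau M)=\Hom_A(N,\tau M)^{*}$, as wanted. This settles the $\E_A^{\prj}$ part of (1). For the $\E_A^{\inj}$ part, apply the formula just proved over $A^{\op}$ to the pair $(\mc{N}^{*},\mc{M}^{*})$:
\[
\E_A^{\inj}(\mc{M},\mc{N})=\E_{A^{\op}}^{\prj}(\mc{N}^{*},\mc{M}^{*})=\Hom_{A^{\op}}^{+}\big(\mc{M}^{*},\tau_{A^{\op}}\mc{N}^{*}\big)^{*};
\]
then use that the trivial dual sends positive parts to $D(\minus)$, that $\Hom_{A^{\op}}(DM,DX)\cong\Hom_A(X,M)$, and that $\tau_{A^{\op}}(\mc{N}^{*})\cong(\tau_A^{\minus 1}\mc{N})^{*}$ (the form in which the compatibility of $\tau$ with the trivial dual, noted before the lemma, is used) to rewrite the right-hand side as $\Hom_A\big((\tau^{\minus 1}\mc{N})^{+},M\big)^{*}=\Hom_A^{+}(\tau^{\minus 1}\mc{N},\mc{M})^{*}$.

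For (2), putting $\mc{N}=\mc{M}$ in (1) gives $\E_A^{\prj}(\mc{M},\mc{M})^{*}\cong\Hom_A^{+}(\mc{M},\tau\mc{M})$ and $\E_A^{\inj}(\mc{M},\mc{M})^{*}\cong\Hom_A^{+}(\tau^{\minus 1}\mc{M},\mc{M})$, so it suffices to show these two spaces are equidimensional. Peeling off the decorated part and the projective and injective summands of $M$ as above reduces this to $\dim\Hom_A(N,\tau M)=\dim\Hom_A(\tau^{\minus 1}N,M)$ for classical modules, up to a correction by "defect'' terms. The Auslander--Reiten duality $\overline{\Hom}_A(N,\tau M)\cong D\Ext_A^{1}(M,N)\cong\underline{\Hom}_A(\tau^{\minus 1}N,M)$ equates the two \emph{stable} Hom-spaces; the remaining --- and, I expect, principal --- difficulty is to match the difference between $\Hom_A(N,\tau M)$ and $\overline{\Hom}_A(N,\tau M)$ (maps factoring through an injective) against the difference between $\Hom_A(\tau^{\minus 1}N,M)$ and $\underline{\Hom}_A(\tau^{\minus 1}N,M)$ (maps factoring through a projective), a bookkeeping made delicate by the fact that the injective and projective summands of $M$ are unrelated. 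Equivalently, one shows that $\dim\E_A^{\prj}(\mc{M},\mc{N})-\dim\E_A^{\inj}(\mc{M},\mc{N})$ is antisymmetric in $(\mc{M},\mc{N})$, whence (2) follows on the diagonal. This antisymmetry is the content of \cite[Proposition~7.3]{DWZ2}, which may also simply be cited.
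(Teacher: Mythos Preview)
The paper does not give a proof of this lemma at all: it is stated purely as a citation of \cite[Corollary 10.8 and Proposition 7.3]{DWZ2}, so there is no ``paper's own proof'' to compare against. What you have written is therefore additional content rather than a reconstruction.

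Your argument for (1) is essentially correct and self-contained. The identification $\Coker\big(\Hom_A(P_0,N)\to\Hom_A(P_1,N)\big)\cong\operatorname{Tr}(M)\otimes_A N$ via $\Hom_A(P_i,N)\cong\Hom_A(P_i,A)\otimes_A N$ and right-exactness of $-\otimes_A N$, followed by the tensor--Hom duality $X\otimes_A N\cong D\Hom_A(N,DX)$, is exactly the standard derivation of the Auslander--Reiten formula, and your treatment of the negative decoration via $\E(P_v[1],g)\cong N(v)\cong D\Hom_A(N,I_v)$ is clean. Note, incidentally, that nothing here uses the quiver-with-potential hypothesis: part (1) holds for any finite-dimensional basic algebra, which is consistent with the fact that \cite[Corollary 10.8]{DWZ2} is itself a general statement.

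For (2), by contrast, you correctly sense but do not resolve the real issue. The identity $\e_A^{\prj}(\mc{M},\mc{M})=\e_A^{\inj}(\mc{M},\mc{M})$ is \emph{not} a formal consequence of Auslander--Reiten duality: the stable isomorphism $\overline{\Hom}_A(N,\tau M)\cong D\Ext^1_A(M,N)\cong\underline{\Hom}_A(\tau^{-1}N,M)$ leaves exactly the ``defect'' terms you describe, and in general these do not cancel. The symmetry genuinely uses the Jacobian structure (ultimately the self-duality built into $\partial S$), and this is what \cite[Proposition 7.3]{DWZ2} supplies. Your final sentence --- that the required antisymmetry of $\e^{\prj}-\e^{\inj}$ is the content of that proposition and may simply be cited --- is accurate, but it means your proof of (2) is not independent of the very reference the paper already cites. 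So for (2) you have matched the paper (a citation), while for (1) you have gone further and given a proof the paper omits.
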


\begin{corollary} \label{SAL}$\E_A^{\prj}(\mc{M},\mc{M})=\E_A^{\prj}(\tau\mc{M},\tau\mc{M})$. In particular, $\tau$ induces a simplicial automorphism on $\mc{S}^r(A)$.
\end{corollary}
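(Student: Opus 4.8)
The plan is to derive the statement directly from Lemma \ref{QPL2}. Recall that $\E_A^{\prj}(\mc{M},\mc{M})=\Hom_A^+(\mc{M},\tau\mc{M})^*$ by part (1) of that lemma, and $\E_A^{\prj}(\tau\mc{M},\tau\mc{M})=\E_A^{\inj}(\tau\mc{M},\tau\mc{M})$ by part (2). So it suffices to show $\E_A^{\inj}(\tau\mc{M},\tau\mc{M})=\E_A^{\prj}(\mc{M},\mc{M})$, and again using part (1), $\E_A^{\inj}(\tau\mc{M},\tau\mc{M})=\Hom_A^+(\tau^{\minus 1}(\tau\mc{M}),\tau\mc{M})^*=\Hom_A^+(\mc{M},\tau\mc{M})^*$, where the last equality uses that $\tau$ is invertible on $\mc{R}ep(A)$ (established in Section \ref{S:PP} and recalled at the start of Section \ref{sec:QP}). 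This is exactly $\E_A^{\prj}(\mc{M},\mc{M})$, which proves the first assertion.

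For the second assertion, I would argue that the equality of $\e$-values just proved means $\tau$ maps the vertex set of $\mc{S}^r(A)$ to itself: if $\delta=\delta(\mc{M})$ is real, witnessed by a rigid $f(\mc{M})$, then $\E_A^{\prj}(\tau\mc{M},\tau\mc{M})=\E_A^{\prj}(\mc{M},\mc{M})=0$, so $\delta(\tau\mc{M})$ is again real; and $\tau$ is a bijection on $\mc{R}ep(A)$, hence on the real $\delta$-vectors, with inverse $\tau^{\minus 1}$. It remains to check that $\tau$ preserves simplices, i.e. that $\e_A^{\prj}(\delta(\mc{M}),\delta(\mc{N}))=0$ implies $\e_A^{\prj}(\delta(\tau\mc{M}),\delta(\tau\mc{N}))=0$ (and conversely, by applying the same to $\tau^{\minus 1}$). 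Here I would invoke the pair of formulas in Lemma \ref{QPL2}(1): $\E_A^{\prj}(\tau\mc{M},\tau\mc{N})=\E_A^{\inj}(\mc{M},\mc{N})^{\text{(shifted)}}$—more precisely $\E_A^{\prj}(\tau\mc{M},\tau\mc{N})=\Hom_A^+(\tau\mc{N},\tau\tau\mc{M})^*$ and $\E_A^{\inj}(\mc{M},\mc{N})=\Hom_A^+(\tau^{\minus 1}\mc{N},\mc{M})^*$, and one checks these agree after relabeling via the invertibility of $\tau$. Thus $\e_A^{\prj}(\tau\mc{M},\tau\mc{N})=\e_A^{\inj}(\mc{M},\mc{N})$; combined with the already-known relation between $\e^{\prj}$ and $\e^{\inj}$ on $\mc{S}^r(A)$ (both vanish exactly on the cluster-type simplices, cf. the discussion following Lemma \ref{QPL1}), this gives that $\tau$ sends simplices to simplices.

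The main obstacle I anticipate is purely bookkeeping: being careful about the direction of the arguments in $\Hom_A^+(-,-)$ and the exact placement of $\tau$ versus $\tau^{\minus 1}$ when combining the two identities of Lemma \ref{QPL2}(1), since a sign error there would give $\e^{\prj}(\tau\mc{M},\tau\mc{N})=\e^{\prj}(\tau^{\minus 2}\mc{M},\ldots)$ or similar. Once the dual-pairing identities are lined up correctly, the argument is a short chain of equalities and the bijectivity of $\tau$; no genuinely new input is needed beyond Lemmas \ref{QPL1} and \ref{QPL2} and the already-established structure of $\mc{S}^r(A)$. Finally, since a simplicial map that is a bijection on vertices and preserves simplices in both directions is a simplicial automorphism, the ``in particular'' clause follows immediately.
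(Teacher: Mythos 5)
Your proof of the first assertion is exactly the paper's: the chain $\E_A^{\prj}(\mc{M},\mc{M})=\Hom_A^+(\mc{M},\tau\mc{M})^*=\Hom_A^+(\tau^{\minus 1}(\tau\mc{M}),\tau\mc{M})^*=\E_A^{\inj}(\tau\mc{M},\tau\mc{M})=\E_A^{\prj}(\tau\mc{M},\tau\mc{M})$, using both parts of Lemma \ref{QPL2}. That part is correct.

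The second assertion, however, contains a genuine gap. Your key step is the claim $\e_A^{\prj}(\tau\mc{M},\tau\mc{N})=\e_A^{\inj}(\mc{M},\mc{N})$, justified by ``relabeling via the invertibility of $\tau$.'' But Lemma \ref{QPL2}(1) gives $\E_A^{\prj}(\tau\mc{M},\tau\mc{N})=\Hom_A^+(\tau\mc{N},\tau^2\mc{M})^*$ and $\E_A^{\inj}(\mc{M},\mc{N})=\Hom_A^+(\tau^{\minus 1}\mc{N},\mc{M})^*$, and equating these amounts to asserting that $\tau^2$ preserves $\Hom_A^+$. That is false: $\tau$ is a bijection on objects of $K^2(\proj A)$, not a $\Hom^+$-preserving equivalence (already $\Hom_A^+(P_v,P_v)\neq 0$ while $\Hom_A^+(\tau P_v,\tau P_v)=\Hom_A^+(\mc{S}_v^-,\mc{S}_v^-)=0$). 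What \emph{does} follow by relabeling is the shifted identity $\E_A^{\inj}(\tau\mc{M},\tau\mc{N})=\Hom_A^+(\mc{N},\tau\mc{M})^*=\E_A^{\prj}(\mc{M},\mc{N})$, which is not what you need, and you cannot pass from $\e^{\inj}$ to $\e^{\prj}$ for \emph{distinct} arguments: Lemma \ref{QPL2}(2) is only stated on the diagonal, and your appeal to ``both vanish exactly on the cluster-type simplices'' is not established anywhere in the paper. The repair is the route the paper takes (implicitly): a simplex $\{\delta_1,\dots,\delta_p\}$ of $\mc{S}^r(A)$ is witnessed by a rigid presentation $\mc{M}=\mc{M}_1\oplus\cdots\oplus\mc{M}_p$ with rigid indecomposable summands in the given $\delta$-classes; apply the already-proved diagonal equality to this \emph{direct sum} to get $\E_A^{\prj}(\tau\mc{M},\tau\mc{M})=0$, and since $\tau$ is additive and $\E$ is biadditive, all cross-terms $\E_A^{\prj}(\tau\mc{M}_i,\tau\mc{M}_j)$ vanish and each $\tau\mc{M}_i$ is rigid. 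Combined with the bijectivity of $\tau$ on indecomposables (so the same argument applies to $\tau^{\minus 1}$), this gives the simplicial automorphism; the pairwise identity you tried to prove is never needed.
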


\begin{proof}
$\E_A^{\prj}(\mc{M},\mc{M})=\Hom_A^+(\mc{M},\tau\mc{M})^*=\Hom_A^+(\tau^{\minus 1}(\tau\mc{M}),\tau\mc{M})^*=\E_A^{\inj}(\tau\mc{M},\tau\mc{M})\\=\E_A^{\prj}(\tau\mc{M},\tau\mc{M})$. Now if $\mc{M}$ is rigid, then so is $\tau\mc{M}$. Since $\tau$ is a bijection on the set of indecomposable objects, it induces a simplicial automorphism on $\mc{S}^r(A)$.
\end{proof}

\begin{question}Is it true that $\mc{S}(A)$ and $\mc{S}(\mu_vA)$ are isomorphic and $\tau$ induces a simplicial automorphism on $\mc{S}(A)$?\end{question}

The following was shown in \cite{B}.

\begin{lemma} For any finite-dimensional algebra $A$, two representations $M,M'$ of $A$ are isomorphic if and only if $\hom_A(N,M)=\hom_A(N,M')$ for any $N\in\Rep(A)$. There is also a dual statement.
\end{lemma}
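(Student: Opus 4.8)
The forward implication is trivial: any isomorphism $M\cong M'$ induces $k$-linear isomorphisms $\Hom_A(N,M)\cong\Hom_A(N,M')$ for every $N\in\Rep(A)$, so the dimensions agree. For the converse the plan is to reconstruct, for each indecomposable $N$, the multiplicity $[X:N]$ of $N$ in the Krull--Schmidt decomposition of an arbitrary $X\in\Rep(A)$ as an explicit integral combination of the numbers $\hom_A(\,\cdot\,,X)$ evaluated at finitely many modules depending only on $N$. Granting this, the hypothesis $\hom_A(N,M)=\hom_A(N,M')$ for all $N$ forces $[M:N]=[M':N]$ for every indecomposable $N$, whence $M\cong M'$ by the Krull--Schmidt property of $\Rep(A)$ recalled above.

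First I would fix an indecomposable $N$ and invoke classical Auslander--Reiten theory \cite[IV]{ASS}: there is a minimal left almost split morphism $\theta^N\colon N\to E_N$, with $\Coker\theta^N=\tau^{-1}N$ when $N$ is non-injective, and $E_N=N/\!\operatorname{soc}N$, $\Coker\theta^N=0$ when $N$ is injective; write $C_N=\Coker\theta^N$. For $N$ indecomposable and $X$ arbitrary, let $\operatorname{rad}_A(N,X)\subseteq\Hom_A(N,X)$ be the subspace of morphisms that are not split monomorphisms (a subspace because $N$ is indecomposable), so that $\Hom_A(N,X)/\operatorname{rad}_A(N,X)\cong k^{[X:N]}$ since $k$ is algebraically closed. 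The two facts to check are: (i) $\operatorname{rad}_A(N,X)$ equals the image of $(\theta^N)^{*}\colon\Hom_A(E_N,X)\to\Hom_A(N,X)$ --- ``$\subseteq$'' because $\theta^N$ is not itself a split monomorphism, and ``$\supseteq$'' because $\theta^N$ is left almost split and $N$ is indecomposable; and (ii) the kernel of $(\theta^N)^{*}$ is $\Hom_A(C_N,X)$, since $h\circ\theta^N=0$ iff $h$ factors (uniquely) through the epimorphism $E_N\twoheadrightarrow C_N$. Combining (i) and (ii),
\[
[X:N]=\hom_A(N,X)-\dim\Img(\theta^N)^{*}=\hom_A(N,X)-\hom_A(E_N,X)+\hom_A(C_N,X),
\]
and every term on the right is read off from the function $\hom_A(\,\cdot\,,X)$.

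Applying this with $X=M$ and with $X=M'$, and using $\hom_A(N,M)=\hom_A(N,M')$ at $N$, at $E_N$, and at $C_N$, gives $[M:N]=[M':N]$ for every indecomposable $N$, so $M\cong M'$. For the dual statement one runs the mirror-image argument with a minimal right almost split morphism $\pi_N\colon E'_N\to N$, obtaining $[X:N]=\hom_A(X,N)-\hom_A(X,E'_N)+\hom_A(X,K_N)$ with $K_N=\Ker\pi_N$ (equal to $\tau N$ when $N$ is non-projective, and to $0$ with $E'_N=\operatorname{rad}N$ when $N$ is projective); alternatively it follows from the first statement applied to $A^{\op}$ via the standard duality $D=\Hom_k(-,k)$. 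The only point requiring genuine care is the verification of (i) and (ii), but these are routine consequences of the definition of an almost split morphism together with the fact that $\theta^N$ is never a split monomorphism, so no real obstacle arises once the Auslander--Reiten machinery is in hand.
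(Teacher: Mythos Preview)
The paper does not prove this lemma; it merely cites Bongartz \cite{B}. Your argument via Auslander--Reiten theory is correct and is in fact the standard proof (and essentially Bongartz's own): recover the multiplicity $[X:N]$ from the minimal left almost split map $\theta^N$ and the resulting formula $[X:N]=\hom_A(N,X)-\hom_A(E_N,X)+\hom_A(C_N,X)$.

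One cosmetic slip: in your justification of (i) the two explanations are interchanged. The inclusion $\operatorname{rad}_A(N,X)\subseteq\Img(\theta^N)^{*}$ follows from the \emph{left almost split} property (every non-section $N\to X$ factors through $\theta^N$), while $\Img(\theta^N)^{*}\subseteq\operatorname{rad}_A(N,X)$ follows from $\theta^N$ not being a split monomorphism (so no $g\circ\theta^N$ can be one either). Both facts are stated correctly, only the labels ``$\subseteq$'' and ``$\supseteq$'' are swapped; the mathematics is fine.
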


\begin{corollary} The following are equivalent: \begin{enumerate}
\item{} Two decorated representations $\mc{M}$ and $\mc{M'}$ are isomorphic.
\item{} For any $\mc{N}\in\mc{R}ep(A)$, $\hom_A^+(\mc{N},\mc{M})=\hom_A^+(\mc{N},\mc{M'})$ and $\e_A^{\inj}(\mc{N},\mc{M})=\e_A^{\inj}(\mc{N},\mc{M'})$.
\item[(2$^*$)] For any $\mc{N}\in\mc{R}ep(A)$, $\hom_A^+(\mc{M},\mc{N})=\hom_A^+(\mc{M'},\mc{N})$ and $\e_A^{\prj}(\mc{M},\mc{N})=\e_A^{\prj}(\mc{M'},\mc{N})$.
\end{enumerate}\end{corollary}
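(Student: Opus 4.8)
The plan is to prove the two formal implications $(1)\Rightarrow(2)$ and $(1)\Rightarrow(2^*)$, and then the converses $(2)\Rightarrow(1)$ and $(2^*)\Rightarrow(1)$, which are mirror images of each other; I would write one converse out in full and merely indicate the dualization of the other. For $(1)\Rightarrow(2)$ and $(1)\Rightarrow(2^*)$ there is nothing to do beyond observing that each of $\hom_A^+(-,-)$, $\e_A^{\inj}(-,-)$, $\e_A^{\prj}(-,-)$ is the $k$-dimension of a space depending only on the isomorphism classes of its arguments, hence unchanged if $\mc M\cong\mc M'$.

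For $(2)\Rightarrow(1)$, write $\mc M=(M,V)$ and $\mc M'=(M',V')$. The first move is to feed in positive test objects: putting $\mc N=(N,0)$ in the first equation of (2) and using $\hom_A^+(\mc N,\mc M)=\hom_A(N,M)$ yields $\hom_A(N,M)=\hom_A(N,M')$ for every $N\in\Rep(A)$, so $M\cong M'$ by the lemma of \cite{B}. To get hold of the negative parts I would rewrite the second equation of (2) using Lemma \ref{QPL2}(1): since $\E_A^{\inj}(\mc N,\mc M)=\Hom_A^+(\tau^{-1}\mc M,\mc N)^*$, writing $\tau^{-1}\mc M=(M_1,V_1)$ gives $\e_A^{\inj}(\mc N,\mc M)=\hom_A(M_1,N)$, and likewise $\e_A^{\inj}(\mc N,\mc M')=\hom_A(M_1',N)$ with $\tau^{-1}\mc M'=(M_1',V_1')$. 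So (2) forces $\hom_A(M_1,N)=\hom_A(M_1',N)$ for all $N$, and the dual form of the lemma of \cite{B} gives $M_1\cong M_1'$.

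Finally I would decode $M_1$. Since $\tau$ is additive and $\tau^{-1}$ sends $\mc S_v^-$ to the positive representation $P_v$, we have $\tau^{-1}(0,V)=(P(\dv V),0)$; combined with $\mc M=(M,0)\oplus(0,V)$ this gives $M_1=L\oplus P(\dv V)$, where $L$ is the positive part of $\tau^{-1}(M,0)$, and similarly $M_1'=L'\oplus P(\dv V')$. As $M\cong M'$ already forces $L\cong L'$, cancelling in the Krull--Schmidt category $\Rep(A)$ leaves $P(\dv V)\cong P(\dv V')$; since the $P_v$ are pairwise non-isomorphic indecomposables, $\dv V=\dv V'$, so $V\cong V'$ and $\mc M\cong\mc M'$. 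The implication $(2^*)\Rightarrow(1)$ runs the same way with $\tau$ in place of $\tau^{-1}$: the first equation of $(2^*)$ gives $M\cong M'$ via the other side of the lemma of \cite{B}, while the second equation, together with $\E_A^{\prj}(\mc M,\mc N)=\Hom_A^+(\mc N,\tau\mc M)^*$ and the fact that $\tau$ sends $\mc S_v^-$ to the positive representation $I_v$, gives $\dv V=\dv V'$ after the same cancellation.

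The one non-formal step, and the point I expect to need the most care, is the last paragraph: the $\e^{\inj}$-data only sees the positive part of $\tau^{-1}\mc M$, which entangles $M$ and $V$, so one first pins down $M$ from the $\hom^+$-data and only then, via the explicit value $\tau^{-1}(0,V)=(P(\dv V),0)$ and Krull--Schmidt cancellation in $\Rep(A)$, isolates $V$. In particular neither equation of (2) suffices on its own: the $\hom^+$-part ignores the negative part, and the $\e^{\inj}$-part determines it only in combination with $M$.
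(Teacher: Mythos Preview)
Your proof is correct and follows essentially the same route as the paper: use Bongartz's lemma on the $\hom^+$ condition to match the positive parts, then rewrite the $\e^{\inj}$ condition via Lemma~\ref{QPL2}(1) as a $\hom^+$ condition on $\tau^{-1}\mc M$ and apply the dual Bongartz lemma to match the positive parts of $\tau^{-1}\mc M$ and $\tau^{-1}\mc M'$. Your explicit Krull--Schmidt cancellation of $L\oplus P(\dv V)\cong L'\oplus P(\dv V')$ just spells out what the paper compresses into the phrase ``by the definition of $\tau$''; in fact one can read off $\dv V$ even more directly, since the only indecomposable summand of $\mc M$ that $\tau^{-1}$ sends to the positive projective $P_v$ is $\mc S_v^-$, so the $P_v$-multiplicity in $M_1$ already equals $(\dv V)_v$ without needing to invoke $M\cong M'$ first.
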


\begin{proof} Since ($2^*$) is the dual of (2), we will prove (2) implies (1) only. By the above lemma, $\mc{M}$ and $\mc{M'}$ have the same positive part. By Lemma \ref{QPL2} and the dual of the above lemma, $\tau^{\minus 1}\mc{M}$ and $\tau^{\minus 1}\mc{M'}$ have the same positive part, then by the definition of $\tau$, $\mc{M}$ and $\mc{M'}$ must have the same negative part.
\end{proof}

\begin{proposition} The AR-transformation $\tau$ commutes with the mutation $\mu_v$ at any vertex $v$.
\end{proposition}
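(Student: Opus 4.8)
The plan is to show that for every decorated representation $\mc{M}$ of $A$ one has $\tau\,\mu_v\mc{M}\cong\mu_v\,\tau\mc{M}$ as decorated representations of $B:=\mu_vA$, where on each side $\tau$ denotes the AR-transformation of the relevant algebra. The main tool is the isomorphism criterion of the preceding corollary: it reduces the claim to verifying, for every $\mc{N}\in\mc{R}ep(B)$, the two equalities
\[\hom_B^+(\mc{N},\tau\mu_v\mc{M})=\hom_B^+(\mc{N},\mu_v\tau\mc{M})\qquad\text{and}\qquad\e_B^{\inj}(\mc{N},\tau\mu_v\mc{M})=\e_B^{\inj}(\mc{N},\mu_v\tau\mc{M}).\]
I would verify these by rewriting each side entirely in terms of homological invariants over $A$.

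First I would use the AR-type formulas of Lemma \ref{QPL2}(1) to remove the occurrences of $\tau$. Over $B$ that lemma gives $\hom_B^+(\mc{N},\tau\mu_v\mc{M})=\e_B^{\prj}(\mu_v\mc{M},\mc{N})$ and $\e_B^{\inj}(\mc{N},\tau\mu_v\mc{M})=\hom_B^+(\mu_v\mc{M},\mc{N})$, while over $A$ it gives $\e_A^{\prj}(\mc{M},\mc{N}')=\hom_A^+(\mc{N}',\tau\mc{M})$ and $\e_A^{\inj}(\mc{N}',\tau\mc{M})=\hom_A^+(\mc{M},\mc{N}')$ for every $\mc{N}'\in\mc{R}ep(A)$. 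This turns the two target equalities into relations among invariants over $A$ and over $B$ attached to $\mc{M}$, $\tau\mc{M}$, $\mu_v\mc{M}$ and one representation of $B$.

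Next, since $\mu_v$ is involutive I would write $\mc{N}=\mu_v\mc{N}'$ with $\mc{N}'=\mu_v\mc{N}\in\mc{R}ep(A)$ and expand every remaining term by the mutation formulas of Lemma \ref{QPL1}(2)--(3). On each side of the two target equalities the $\hom_A^+$-contributions cancel, and a short computation reveals that both equalities follow from the single pair of relations
\[\beta_0((\tau\mc{M})^*)_v=\beta_1(\mc{M})_v\qquad\text{and}\qquad\beta_1((\tau\mc{M})^*)_v=\beta_0(\mc{M})_v.\]
But these are precisely the identities $\beta_0(\tau\mc{M}^*)=\beta_1(\mc{M})$ and $\beta_1(\tau\mc{M}^*)=\beta_0(\mc{M})$ recorded just before Lemma \ref{QPL1}, combined with the fact---noted in the same discussion---that $\tau$ commutes with the trivial dual. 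This completes the verification and hence the proposition.

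The only step requiring real care is the bookkeeping: keeping straight which of $\mc{M}$, $\mc{N}'$, $\tau\mc{M}$ plays which role in the formulas of Lemmas \ref{QPL1} and \ref{QPL2}, and recalling that $\mu_v$ carries $\mc{R}ep(A)$ into $\mc{R}ep(\mu_vA)=\mc{R}ep(B)$ so that the comparison genuinely takes place over $B$. Beyond invoking the correct criterion for isomorphism of decorated representations, there is no conceptual obstacle.
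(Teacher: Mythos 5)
Your proposal is correct and follows essentially the same route as the paper's proof: reduce to the isomorphism criterion of the preceding corollary, strip off $\tau$ via Lemma \ref{QPL2}(1), transport across the mutation via Lemma \ref{QPL1}(2)--(3), and close the gap with the identities $\beta_0(\tau\mc{M}^*)=\beta_1(\mc{M})$, $\beta_1(\tau\mc{M}^*)=\beta_0(\mc{M})$. The only differences are cosmetic (working over $\mu_vA$ and substituting $\mc{N}=\mu_v\mc{N}'$ explicitly, rather than chaining the equalities as the paper does), and your bookkeeping does check out.
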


\begin{proof} We first verify that $\hom_A^+(\mc{N},\tau\mu_v\mc{M})=\hom_A^+(\mc{N},\mu_v\tau\mc{M})$ for any $\mc{N}\in\mc{R}ep(A)$. Applying Lemma \ref{QPL1} and \ref{QPL2}, we get the following equation \begin{align*}
\hom_A^+(\mc{N},\tau\mu_v\mc{M}) &=\e_A^{\prj}(\mu_v\mc{M},\mc{N})\\
&=\e_{\mu_vA}^{\prj}(\mc{M},\mu_v\mc{N})+\beta_0(\mc{M})_v\beta_1(\mu_v\mc{N})_v-\beta_1(\mc{M})_v\beta_0(\mu_v\mc{N})_v\\
&=\hom_{\mu_vA}^+(\mu_v\mc{N},\tau\mc{M})+\beta_1(\tau\mc{M}^*)_v\beta_1(\mu_v\mc{N})_v-\beta_0(\tau\mc{M}^*)_v\beta_0(\mu_v\mc{N})_v\\
&=\hom_A^+(\mc{N},\mu_v\tau\mc{M}).\end{align*}
We remain to verify that $\e_A^{\inj}(\mc{N},\tau\mu_v\mc{M})=\e_A^{\inj}(\mc{N},\mu_v\tau\mc{M})$ for any $\mc{N}\in\mc{R}ep(A)$. Applying Lemma \ref{QPL1} and \ref{QPL2}, we get the following equation \begin{align*}
\e_A^{\inj}(\mc{N},\tau\mu_v\mc{M}) &=\hom_A^+(\mu_v\mc{M},\mc{N})\\
&=\hom_{\mu_vA}^+(\mc{M},\mu_v\mc{N})+\beta_1(\mc{M})_v\beta_1(\mu_v\mc{N}^*)_v-\beta_0(\mc{M})_v\beta_0(\mu_v\mc{N}^*)_v\\
&=\e_{\mu_vA}^{\inj}(\mu_v\mc{N},\tau\mc{M})+\beta_0(\tau\mc{M}^*)_v\beta_1(\mu_v\mc{N}^*)_v-\beta_1(\tau\mc{M}^*)_v\beta_1(\mu_v\mc{N}^*)_v\\
&=\e_A^{\inj}(\mc{N},\mu_v\tau\mc{M}).\end{align*}
\end{proof}

\begin{example} Consider the quiver $Q$
$$\threevertextamepotential{v}{w}{u}{b}{c}{d}{a}$$
with the potential $S=cba$. Note that its Jacobian algebra $A$ is of tame type and has infinite global dimension. If we perform the mutation at the vertex $v$, then we get the quiver $Q'$
$$\threevertextame{v}{w}{u}{b^*}{d}{a^*}$$
with zero potential. A similar simplicial complex as $\mc{S}(kQ')$ was studied in \cite{DW}. Note that $(\minus 1,0,1)$ (resp. $(1,0,\minus 1)$) is the only tame $\delta$-vector of $kQ'$ (resp. $A$), and the mutation takes a general presentation in one space to the other, so $\mc{S}(A)=\mc{S}(kQ')$. Their stereographic projections look like

\begin{center}
\includegraphics[width=5in]{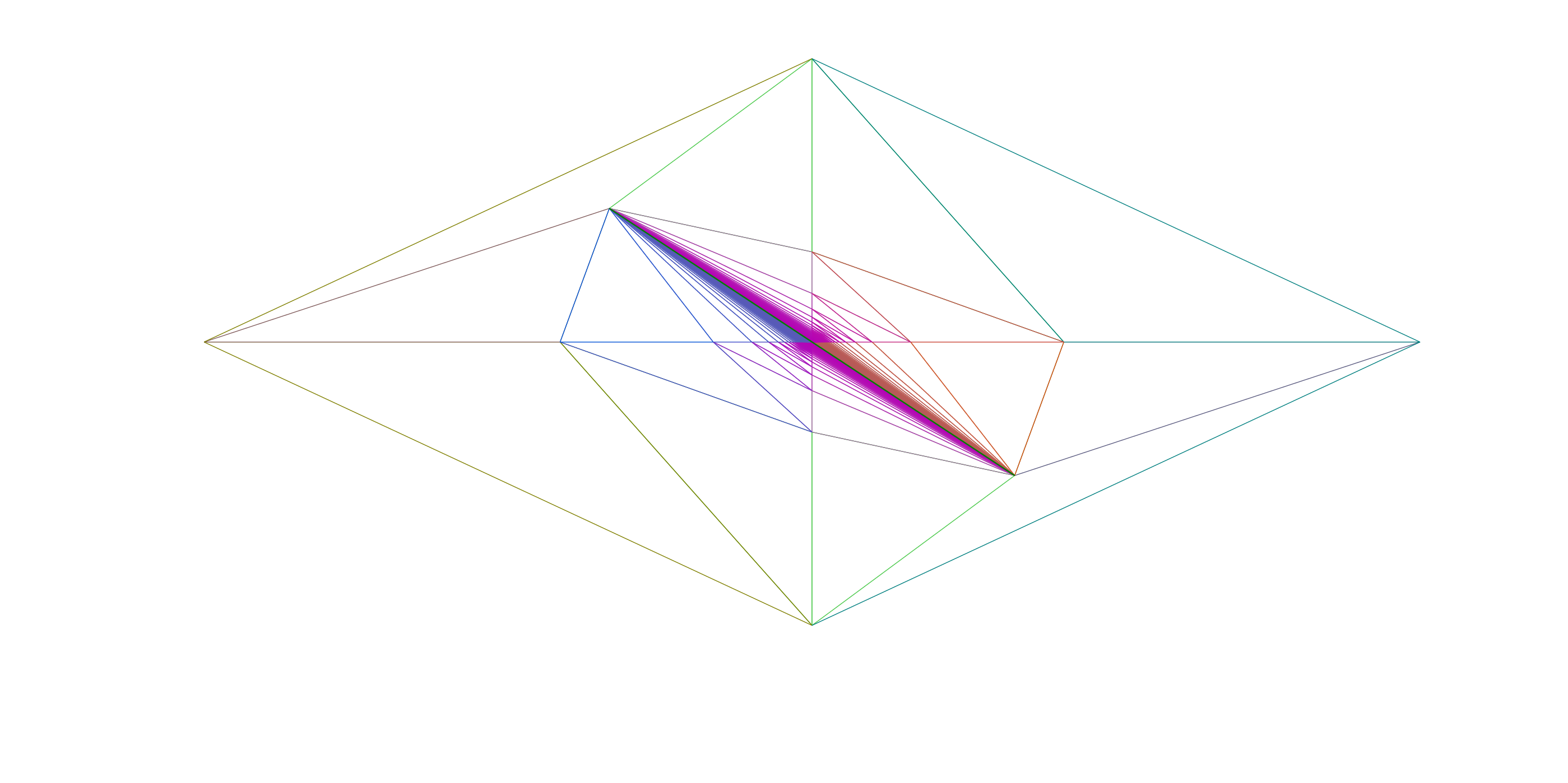}
\end{center}

\end{example}

\section*{Acknowledgement}
The second author wants to thank the first author Harm Derksen for his guidance, continuous support, great patience, and especially high tolerance to one's stupidity.

\bibliographystyle{amsplain}




\end{document}